\providecommand\mathbb{\bf}
\newcommand\R{{\mathbb R}}
\newcommand\N{{\mathbb N}}
\newtheorem{pro}{Proposition}[section]
\newtheorem{defi}{Definition}[section]
\newtheorem{lemma}{Lemma}[section]
\newtheorem{theorem}{Theorem}[section]
\newtheorem{remark}{Remark}[section]
\newcommand{\calo}{
{\mathcal O}}
\newcommand{\intth}[1]{
\int _0 ^{\pi} #1 \;\mathrm{d}\theta}
\newcommand{\Wp}{
W^\prime}
\newcommand{\omegap}[0]{
\omega ^\prime}
\newcommand{\mlo}[0]{
M_{l\Omega}}
\newcommand{\calmlo}[0]{
{\mathcal M}_{l\Omega}}
\newcommand{\tvarphi}[0]{
\widetilde{\varphi}}
\newcommand{\tpsi}[0]{
\widetilde{\psi}}
\newcommand{\sphere}[0]{
\mathbb{S}^{d-1}}
\newcommand{\rsphere}[0]{
r \mathbb{S}^{d-1}}
\newcommand{\supp}[0]{
\mathrm{supp\;}}
\newcommand{\abv}[0]{
(\alpha - \beta |v|^2) v}
\newcommand{\abvs}[0]{
(\alpha - \beta \left |{\mathcal V}(s;v)\right |^2) {\mathcal V}(s;v)}
\newcommand{\intxv}[1]{
\int _{\R^d}\!\!\int_ {\R^d}\! #1\;\mathrm{d}v \mathrm{d}x}
\newcommand{\intxpvp}[1]{
\int _{\R^d}\!\!\int_ {\R^d}\! #1\;\mathrm{d}v^\prime \mathrm{d}x^\prime}
\newcommand{\into}[1]{
\int _{r \mathbb{S}^{d-1}}\! #1\;\mathrm{d}\omega}
\newcommand{\intopr}[1]{
\int _{r \mathbb{S}^{d-1}}\! #1\;\mathrm{d}\omega ^{\;\prime}}
\newcommand{\eps}[0]{
\varepsilon}
\newcommand{\fe}[0]{
f ^\varepsilon}
\newcommand{\go}[0]{
g^{(1)}}
\newcommand{\fin}[0]{
f ^{\mathrm{in}}}
\newcommand{\fo}[0]{
f ^{(1)}}
\newcommand{\Fo}[0]{
F ^{(1)}}
\newcommand{\Go}[0]{
G ^{(1)}}
\newcommand{\ftw}[0]{
f ^{(2)}}
\newcommand{\A}[0]{
\{0\} \cup \rsphere}
\newcommand{\xA}[0]{
\R ^d \times ( \{0\} \cup  \rsphere )}
\newcommand{\Divx}[0]{
\mathrm{div}_x}
\newcommand{\Divv}[0]{
\mathrm{div}_v}
\newcommand{\Divo}[0]{
\mathrm{div}_\omega}
\newcommand{\ave}[1]{
\left \langle #1 \right \rangle }
\newcommand{\mbxv}[0]{
{\cal M}_b ^+ (\R ^d \times \R ^d)}
\newcommand{\mbv}[0]{
{\cal M}_b ^+ (\R ^d)}
\newcommand{\imoo}[0]{
\left ( I_d - \frac{\omega \otimes \omega}{r^2} \right ) }
\newcommand{\txi}[0]{
\widetilde{\xi}}
\newcommand{\tf}[0]{
\tilde{f}}
\newcommand{\czcxv}[0]{
C^0_c (\R^d \times \R^d)}
\newcommand{\czc}[0]{
C^0_c (\R^d)}
\newcommand{\coc}[0]{
C^1_c (\R^d)}
\newcommand{\vsv}[0]{
\frac{v}{|v|}}
\newcommand{\lime}[0]{
\lim _{\varepsilon \searrow 0}}
\newcommand{\dom}[0]{
\mathrm{d}\omega}
\newcommand{\dv}[0]{
\mathrm{d}v}
\newcommand{\dx}[0]{
\mathrm{d}x}
\newcommand{\intv}[1]{
\int _{\R ^d} \!#1 \;\mathrm{d}v}
\newcommand{\intvp}[1]{
\int _{\R ^d} \!#1 \;\mathrm{d}v^\prime}
\newcommand{\ind}[1]{
{\bf 1}_{#1}}
\newcommand{\vp}[0]{
v^{\prime}}
\baselinestretch\renewcommand{\baselinestretch}{1.5}
\numberwithin{equation}{section}
\begin{document}

\title{Reduced fluid models for self-propelled particles \\interacting through alignment}
\author{M. Bostan$^{1}$ and J. A. Carrillo$^2$}

\maketitle

\centerline{1. Aix Marseille Universit\'e, CNRS, Centrale Marseille, Institut de Math\'ematiques de Marseille, UMR 7373,}
\centerline{Ch\^ateau Gombert 39 rue F. Joliot Curie, 13453 Marseille Cedex 13 FRANCE}
 \bigskip
\centerline{2. Department of Mathematics, Imperial College London, }
\centerline{London SW7 2AZ, United Kingdom.}

\begin{abstract}
The asymptotic analysis of kinetic models describing the behavior
of particles interacting through alignment is performed. We will
analyze the asymptotic regime corresponding to large alignment
frequency where the alignment effects are dominated by the self
propulsion and friction forces. The former hypothesis leads to a
macroscopic fluid model due to the fast averaging in velocity,
while the second one imposes a fixed speed in the limit, and thus
a reduction of the dynamics to a sphere in the velocity space. The
analysis relies on averaging techniques successfully used in the
magnetic confinement of charged particles. The limiting particle
distribution is supported on a sphere, and therefore we are forced
to work with measures in velocity. As for the Euler-type
equations, the fluid model comes by integrating the kinetic
equation against the collision invariants and its generalizations
in the velocity space. The main difficulty is their identification
for the averaged alignment kernel in our functional setting of
measures in velocity.
\end{abstract}

\section{Introduction}
\label{Intro}

The subject matter of this paper concerns the behavior of living
organisms such as flocks of birds, school of fish, swarms of
insects, myxobacteria ... These models include  short-range
repulsion, long-range attraction, self-propelling and friction
forces, reorientation or alignment see
\cite{Aoki,Reyn,HW,VicCziBenCohSho95,LevRapCoh00,GreCha04,CouKraFraLev05,HCH09,BTTYB,BEBSVPSS}.
We consider self-propelled particles with Rayleigh friction
\cite{ChuHuaDorBer07,ChuDorMarBerCha07,CarDorPan09,UAB25,BKSUV,ABCV,CHM1,CHM2},
and alignment, introduced through the Cucker-Smale reorientation
procedure \cite{CS0,CS2}, see also
\cite{HT08,HL08,CFRT10,review,MT11,MT2} for further details and
\cite{KCBFL} for a survey. If we denote by $f = f(t,x,v)\geq 0$
the particle density in the phase space $(x,v) \in \R^d \times \R
^d$, with $d \in \{2,3\}$, the self-propulsion/friction mechanism
is given by the term $\Divv\{f \abv\}$. Notice that the balance
between the self-propulsion and friction forces occurs on the
velocity sphere $|v| = r := \sqrt{\alpha/\beta}$. We fix the speed
$r$, meaning that $\alpha$ and $\beta$ are anytime related by the
equality $\alpha = \beta r ^2$. The coefficients $\alpha, \beta
>0$ can be interpreted as follows. In the absence of friction, the
particles accelerate with $\alpha v$, leading to a exponential
growth of velocity, with frequency $\alpha$. In the absence of
self-propulsion, the inverse of the relative kinetic energy grows
linearly, with the frequency $2\beta |v|^2$, where $v$ is the
initial velocity of the particle
\[
\frac{\mathrm{d}}{\mathrm{d}s}\frac{|v|^2}{|V(s)|^2} = - \frac{|v|^2}{|V(s)|^4} 2 (V(s) \cdot V^\prime (s)) = 2 \beta |v|^2.
\]
Each individual in the group relaxes its velocity toward the mean
velocity of the neighbors, leading to the term $\nu \;\Divv\{f
(u[f] - v)\}$, where $\nu$ is the reorientation frequency and
$u[f]$ is the mean velocity
\[
u[f(t)](x) = \frac{\intxpvp{f(t, x^\prime, v^\prime) h(x- x^\prime) v^\prime}}{\intxpvp{f(t, x^\prime, v^\prime) h(x- x^\prime)}}.
\]
The weight application $h$ is a decreasing, radial, non negative
given function that determines the interaction neighborhood around
any position. By including also noise in the above kinetic model,
we get to the Fokker-Planck like equation
\begin{align}
\label{Equ1}
\partial _t f + \Divx(fv) + \Divv \{ f \abv\}  & = \nu \;\Divv \{f ( v - u[f])\} + \tau \Delta _v f  \\
& = \nu \;\Divv \{ f(v - u[f]) + \sigma \nabla _v f \}:=\nu
Q(f)\,, \nonumber
\end{align}
where $\sigma = \tau /\nu$ represents the diffusion coefficient in
the velocity space. We investigate the large time and space scale
regime of \eqref{Equ1} that is, we fix large time and space units.
In this case, equation \eqref{Equ1} should be replaced by
\begin{align}
\label{Equ2} \varepsilon_1 \{ \partial _t f + \Divx(fv)\} + \Divv \{f
\abv \} & = \nu Q(f).
\end{align}
The choice of a large length unit leads to a local reorientation
mechanism: the mean velocity $u[f]$ in \eqref{Equ2} is now given
by
\[
u[f(t)](x) = \frac{\intvp{f(t, x, v ^\prime)v ^\prime}}{\intvp{f(t,x,v^\prime)}}.
\]
Notice that if $f(t, x, \cdot) = 0$, then the Fokker-Planck
collision operator vanishes for any $u$. In this case we can
define $u[f(t)] = 0$, without loss of generality. We assume that
the frequencies $\varepsilon_1$ and $\nu$ scale like
$\tfrac{\nu}{\varepsilon_1} \approx \frac{1}{\eps _2}$ for some small
parameters $\eps _1, \eps _2 >0$ and thus the equation
\eqref{Equ2} becomes
\begin{align}
\label{Equ3}
\partial _t f ^{\eps_1, \eps _2} + \Divx(f ^{\eps_1, \eps _2}v ) & + \frac{1}{\eps_1} \Divv\{f ^{\eps_1, \eps _2}\abv\} = \frac{1}{\eps_2}Q(f ^{\eps_1, \eps _2}).
\end{align}
Assume for the moment that $\eps _1 \searrow 0$ and $\eps _2$ is
fixed. In this situation, the leading order term in the
Fokker-Planck equation \eqref{Equ3} corresponds to the
self-propulsion/friction mechanism, and we expect that the limit
density $f^{\eps_2} = \lim _{\eps _1 \searrow 0}f ^{\eps_1, \eps
_2}$ satisfies
\begin{equation*}
\Divv \{f^{\eps_2} \abv \} = 0.
\end{equation*}
The previous constraint exactly says that at any time $t$ and any
position $x$, the velocity distribution $f ^{\eps_2} (t, x,
\cdot)$ is a measure supported in $\{0\}\cup \rsphere$ cf.
\cite{BosCar13}. The particles will tend to move with asymptotic
speed $r$. These models have been shown to produce complicated
dynamics and patterns at the particle level such as mills, double
mills, flocks and clumps, see \cite{DorChuBerCha06}, whose
stability properties are very relevant in the applications, see
\cite{BKSUV,ABCV,CHM2}. Assuming that all individuals move with
constant speed also leads to spatial aggregation, patterns, and
collective motion \cite{CziStaVic97,EbeErd03,ParEde99}. More
exactly, it was shown in \cite{BosCar13} that, by taking the limit
$\eps _1 \searrow 0$, the solutions $f ^{\eps_1, \eps _2}$ of
\eqref{Equ3} converge toward the solution $f ^{\eps _2}$ of
\begin{align}
\label{Equ4}
\partial _t f ^{\eps _2} + \Divx(f ^{\eps _2} \omega ) + \frac1{\eps _2} \mathrm{div}_\omega \left \{ f ^{\eps _2}\imoo u [f ^{\eps _2}]\right \}& = \frac{\sigma}{\eps _2} \Delta _\omega f ^{\eps _2}
\end{align}
for all $(t,x,\omega) \in \R_+ \times \R^d \times \rsphere$ with
\[
u[f ^{\eps _2}(t)](x) = \frac{\into{f ^{\eps _2}(t,x,\omega)\omega}}{\into{f ^{\eps _2}(t,x,\omega)}},\;\;(t,x) \in \R_+ \times \R^d.
\]
The above result states that in the limit $\eps _1 \searrow 0$,
the Cucker-Smale model with diffusion is reduced to a Vicsek like
model, whose phase transition was analyzed in \cite{FL11}. The
evolution problem \eqref{Equ4} on the phase space $\R^d \times
\rsphere$, with normalized velocity field $u[f ^{\eps _2}]$ {\it
i.e.,}
\begin{equation*}
\partial _t f + \Divx(f\omega) + \nu \;\Divo \left \{ f \imoo \Omega [f]\right \} = \tau \Delta _\omega f,
\end{equation*}
for all $(t,x,\omega) \in \R_+ \times \R ^d \times \rsphere$ with
\[
\Omega [f(t)] (x)= \frac{\into{f(t,x,\omega) \omega }}{|\into{f(t,x,\omega) \omega}|},\;\;(t,x) \in \R_+ \times \R^d
\]
was also proposed in the literature as continuum version
\cite{DM08} of the Vicsek model
\cite{VicCziBenCohSho95,CouKraFraLev05}. Furthermore, the full
phase transition for stationary solutions and their asymptotic
stability was subsequently generalized in \cite{DFL10,DFL13}
allowing for quite general dependency of $\nu$ and $\tau$ on
$|u[f(t)]|$. We will focus on the relaxation toward the mean
velocity $u[f]$, whose alignment mechanism relies only on the
direction of the mean velocity $\Omega [f] = u[f]/|u[f]|$.
Nevertheless, our method still applies and allows us to handle the
model with normalization and the generalizations in
\cite{DM08,DFL13} as well.

The original kinetic Vicsek model in
\cite{VicCziBenCohSho95,CouKraFraLev02} was derived as the
mean-field limit of some stochastic particle systems in
\cite{BCC12}. In fact, previous particle systems have also been
studied with noise in \cite{BCC11} for the mean-field limit (see
also \cite{Neu77,BraHep77,Dob79,CCR10,AIR,CCH,CCH2,CCHS}), in
\cite{HLL09} for studying some properties of the Cucker-Smale
model with noise, and in \cite{BCCD16,CKR} for phase transitions
at the level of the Cucker-Smale model and the inhomogeneous level
respectively.

We assume now that both $\eps _1, \eps _2$ become small. The idea
is to justify a macroscopic model for \eqref{Equ4}, resulting from
the balance between two opposite phenomena
\begin{enumerate}
\item The reorientation, which tends to align the particle
velocities with respect to the mean velocity;

\item The diffusion, which tends to spread the particle velocities
isotropically on the sphere $\rsphere$.
\end{enumerate}
Such hydrodynamic models were obtained in \cite{DM08,DFL13}, by
letting $\eps _2 \searrow 0$ in the normalized alignment version
of \eqref{Equ4}. They are typically referred as Self-Organized
Hydrodynamics (SOH). Notice that the SOH model was obtained by
passing to the limit successively in \eqref{Equ3} with respect to
$\eps_1, \eps _2$. After letting $\eps _1 \searrow 0$, the
dynamics were reduced to the phase space $(x,v) \in \R^d \times
\rsphere$, but still captures microscopic behavior in the tangent
directions to the sphere $\rsphere$. The second limit procedure,
$\eps _2 \searrow 0$, leads to the macroscopic equations for the
density $\into{f}$ and the direction of the flux $\into{\omega
f}$.

We intend to obtain a SOH model, by passing to the limit in
\eqref{Equ3}, simultaneously with respect to $(\eps_1, \eps _2)$.
Motivated by the above discussion, we assume that $\eps _1 = \eps
^2$ and $\eps _2 = \eps$, where $\eps >0$ is a small parameter,
that is, the self-propulsion/friction mechanism dominates the
alignment. This implies that $\nu= \varepsilon$ and
$\tau=\sigma\varepsilon$. Therefore \eqref{Equ3} becomes
\begin{align}
\label{Equ6}
\partial _t \fe + \Divx(\fe v) + \frac1{\eps^2}\Divv \{ \fe \abv \}  = \frac1\epsilon
Q(f)\,,
\end{align}
for all $(t,x,v) \in \R_+ \times \R^{2d}$, supplemented by the
initial condition
\begin{equation*}
\label{Equ7} \fe (0,x,v) = \fin (x,v),\;\;(x,v) \in \R^d \times \R^d.
\end{equation*}
Very recently, by a similar scaling, fluid models have been
obtained for the transport of charged particles, under the action
of strong magnetic fields, which dominate the collision effects.
The resulting macroscopic model is a gyrokinetic version of the
Euler equations, in the parallel direction with respect to the
magnetic field \cite{BosDCDS15,BosIHP15}.

The behavior of the family $(\fe)_{\eps >0}$, as the parameter $\eps$ becomes small, follows by analyzing the formal expansion
\begin{equation}
\label{FormalExpansion} \fe = f + \eps \fo + \eps ^2 \ftw + ...
\end{equation}
Plugging the above Ansatz into \eqref{Equ6}, leads to the constraints
\begin{equation}
\label{Equ8}
\Divv\{f \abv \} = 0
\end{equation}
\begin{equation}
\label{Equ9} \Divv\{\fo  \abv \} = \Divv \{ f(v - u[f]) + \sigma
\nabla _v f\}
\end{equation}
and to the time evolution equations
\begin{align}
\label{Equ10}
\partial _t f + \Divx(fv) + \Divv\{\ftw \abv\} & = \mathcal{L}_f(f^{(1)})
\end{align}
with
$$
\mathcal{L}_f(f^{(1)}):= \Divv\{\fo (v - u[f]) + \sigma \nabla _v
\fo \} - \;\Divv \left \{f \;\frac{\intvp{\fo (\vp -
u[f])}}{\intvp{f}} \right \}
$$
cutting the development at second order.

We expect the same macroscopic SOH model for the moments of $f$ as
obtained in \cite{DM08,DFL10,DFL13}. The main advantage for
considering \eqref{Equ6} instead of \eqref{Equ4} with $\eps _2 =
\eps$ is that the resolution of \eqref{Equ6} for small $\eps$ will
provide a solution supported near $\R^d \times \rsphere$, which
fits much better the behavior of living organism systems, than the
solution of \eqref{Equ4} on $\R^d \times \rsphere$. But the price
to pay is to deal with two Lagrange multipliers, appearing in
\eqref{Equ10}, which have to be eliminated, thanks to the
constraints \eqref{Equ8} and \eqref{Equ9}. The first constraint
was analyzed in detail in \cite{BosCar13}. It exactly says that
$f$ is a measure supported in $\xA$. We denote by $\mbv{}$ the set
of non negative bounded Radon measure on $\R^d$.

\begin{pro}\label{FirstConstraint}
Assume that $(1+|v|^2)F \in {\cal M}_b ^+ (\R^d)$. Then $F$ solves $\Divv\{F \abv \} = 0$ in ${\mathcal D}^\prime (\R^d)$ {\it i.e.,}
\[
\int_{\R^d} \abv \cdot \nabla _v \varphi \;\mathrm{d} F(v) = 0,\;\;\mbox{for any } \varphi \in \coc{}
\] if and only if $\supp F \subset \A$.
\end{pro}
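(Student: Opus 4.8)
The plan is to treat the two implications separately, with essentially all the work in the ``only if'' direction, and to exploit the fact that the field $\abv$ is radial so that the whole question collapses to a one-dimensional problem after pushing $F$ forward under $v \mapsto |v|^2$. The easy implication is ``if'': when $\supp F \subset \A$, the relation $\alpha = \beta r^2$ makes $\abv$ vanish identically on $\{0\} \cup \rsphere$, hence $F$-a.e.; so the integrand $\abv \cdot \nabla_v \varphi$ is zero $F$-a.e. and the weak equation holds for every $\varphi$.

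For the converse I would first restrict to radial test functions $\varphi(v) = \phi(|v|^2)$ with $\phi \in C^1_c$, which are admissible elements of $\coc$. For these, $\nabla_v \varphi = 2\phi'(|v|^2)\,v$, so, using $\alpha = \beta r^2$,
\[
\abv \cdot \nabla_v \varphi = 2\beta |v|^2 (r^2 - |v|^2)\,\phi'(|v|^2).
\]
Setting $s = |v|^2$, letting $\mu$ denote the pushforward of $F$ under $v \mapsto |v|^2$ (a bounded nonnegative measure on $[0,\infty)$), and writing $g(s) := s(r^2 - s)$, the weak equation becomes $\int_{[0,\infty)} g(s)\,\phi'(s)\,\md\mu(s) = 0$ for every such $\phi$. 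The goal is then to deduce that $\mu$ is concentrated on $\{0\}\cup\{r^2\}$: this forces $F$ to charge only $\{|v|=0\}\cup\{|v|=r\}$, and, that set being closed, $\supp F \subset \A$. Working with a radial projection is legitimate here precisely because the target set $\A$ is itself radial.

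The central step is the elimination of the integration constant. Testing with $\phi \in C^\infty_c((0,\infty))$ shows that the locally finite measure $g\mu$ has vanishing distributional derivative on the connected open interval $(0,\infty)$; by the classical characterization of distributions with zero derivative, $g\mu = c\,\md s$ for a single constant $c$. The main obstacle — and the reason the hypotheses enter — is to show $c = 0$ while only assuming finiteness of $\mu$ (we do \emph{not} have a fourth moment, so $g\mu$ need not be globally finite, which is why I localize and argue at the level of distributions). Here I would use that $g$ vanishes at the endpoint $s=0$ like $r^2 s$, so $1/g$ fails to be integrable near $0$: from $\mu((\eta,\delta)) = c\int_\eta^\delta \md s/g(s)$ on a subinterval of $(0,r^2)$ and the divergence of this integral as $\eta \searrow 0$, the finiteness of $\mu((0,\delta))$ forces $c = 0$.

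Consequently $g\mu = 0$ as a measure on $(0,\infty)$. Since $g$ vanishes only at $s = r^2$ inside this interval, this means $\mu\big((0,r^2)\cup(r^2,\infty)\big) = 0$, i.e.\ $\mu$ is supported in $\{0,r^2\}$, which is exactly what was needed to conclude $\supp F \subset \A$. The delicate points are thus the reduction to radial test functions together with the pushforward, and the vanishing of the constant $c$, obtained from the non-integrability of $1/g$ at the equilibrium values $s = 0$ and $s = r^2$ combined with the boundedness of $\mu$.
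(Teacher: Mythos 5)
Your proof is correct, but it takes a genuinely different route from the paper's. The paper reduces the statement to the adjoint problem $-\abv \cdot \nabla _v \varphi = \psi$ for $\psi$ smooth with compact support in $\R^d \setminus (\A)$ (Lemma \ref{AdjointProblem}, i.e.\ Lemma 3.1 of \cite{BosCar13}): a bounded $C^1$ solution $\varphi$ is constructed by integrating $\psi$ along the characteristics of the self-propulsion/friction flow, using that the points of $\rsphere$ are stable equilibria and $0$ is unstable; plugging this $\varphi$ into the weak formulation (after truncation, which is where the second-moment hypothesis $(1+|v|^2)F \in {\cal M}_b ^+ (\R^d)$ earns its keep, since $\varphi$ is bounded but not compactly supported) yields $\int \psi \;\md F = 0$ for all such $\psi$, hence $\supp F \subset \A$. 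You instead exploit the radial structure of the field: pushing $F$ forward under $v \mapsto |v|^2$ turns the constraint, tested on radial functions $\varphi (v) = \phi (|v|^2)$, into the one-dimensional distributional equation $(g\mu)^\prime = 0$ on $(0,\infty)$ with $g(s) = s(r^2 - s)$, so $g \mu = c \,\md s$, and the constant is killed by the non-integrability of $1/g$ at the equilibrium value $s = 0$ (or $s = r^2$) combined with the finiteness and nonnegativity of $\mu$. Both delicate points are handled soundly: restricting to radial test functions is legitimate precisely because the target set $\A$ is radial, and your localization to compact subsets of $(0,\infty)$ correctly sidesteps the absence of a fourth moment of $F$. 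What each approach buys: yours is more elementary and essentially moment-free (boundedness of $F$ suffices, and no flow needs to be constructed or estimated), whereas the paper's adjoint construction does not rely on radial symmetry and, more to the point, is reused later --- Lemma \ref{FOneF} determines $\Fo - \ave{\Fo}$ exactly by expressing the normal derivative of the adjoint solution in terms of the datum $\chi$, so the paper extracts both Proposition \ref{FirstConstraint} and that computation from a single lemma. One cosmetic remark: when eliminating $c$, it is worth stating explicitly that $c<0$ is excluded by $\mu \geq 0$ and $c>0$ by $\mu ((\eta, \delta)) \leq \mu ([0,\infty)) < +\infty$ as $\eta \searrow 0$; your phrasing ``finiteness forces $c=0$'' compresses these two cases into one.
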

The proof of Proposition \ref{FirstConstraint} is based on the resolution of the adjoint problem
\[
-(\alpha - \beta |v|^2) v \cdot \nabla _v \varphi = \psi (v),\;\;v \in \R^d
\]
for any smooth function $\psi$ with compact support in
$\R^d \setminus (\A)$, cf. Lemma 3.1 of \cite{BosCar13}.

\begin{lemma} \label{AdjointProblem}
For any $C^1$ function $\psi = \psi (v)$ with compact support in
$\R^d \setminus (\A)$, there is a bounded $C^1$ function $\varphi =
\varphi (v)$ such that $\varphi (0) = 0$ and
\begin{equation*}
- \abv \cdot \nabla _v \varphi = \psi (v),\;\;v \in
\R^d.
\end{equation*}
\end{lemma}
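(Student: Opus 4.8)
The plan is to solve this first order linear transport equation by the method of characteristics, exploiting the fact that the drift $\abv$ is radial. First I would introduce the characteristic flow $\mathcal V(s;v)$ solving $\frac{\md}{\md s}\mathcal V = \abvs$ with $\mathcal V(0;v)=v$. Since the field is parallel to $v$, the flow preserves the direction $\vsv$, while the modulus $\rho(s)=|\mathcal V(s;v)|$ obeys the scalar logistic-type equation $\dot\rho=(\alpha-\beta\rho^2)\rho$, whose equilibria are $\rho=0$ (repelling) and $\rho=r$ (attracting). Hence both $\{0\}$ and $\rsphere$ are invariant, and every other trajectory moves monotonically in $\rho$ toward $\rsphere$. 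Along a characteristic the equation reduces to $\frac{\md}{\md s}\varphi(\mathcal V(s;v))=-\psi(\mathcal V(s;v))$, so $\varphi$ is formally obtained by integrating $\psi$ along the flow.

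Rather than integrating the flow directly, I would reparametrise each characteristic by its radius and write, for $v\neq 0$,
\begin{equation*}
\varphi(v)=-\int_0^{|v|}\frac{\psi\!\left(\rho\,\vsv\right)}{(\alpha-\beta\rho^2)\rho}\,\md\rho,\qquad \varphi(0)=0.
\end{equation*}
The point that makes this well defined is precisely the hypothesis $\supp\psi\cap(\A)=\emptyset$: the integrand is singular only at $\rho=0$ and $\rho=r$, and $\psi$ vanishes on a full neighbourhood of both, so the integrand is supported in a fixed compact subset of $(0,r)\cup(r,+\infty)$ on which $(\alpha-\beta\rho^2)\rho$ is bounded away from $0$. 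This yields at once a uniform bound $|\varphi(v)|\le\|\psi\|_{\linf}\int\frac{\md\rho}{|(\alpha-\beta\rho^2)\rho|}$, the $\rho$-integral running over the radial projection of the support, hence boundedness of $\varphi$.

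To verify the equation I would use that for a function of the form $\varphi(v)=\Phi(|v|,\vsv)$ one has $\abv\cdot\nabla_v\varphi=(\alpha-\beta|v|^2)|v|\,\partial_\rho\Phi$, and here $\partial_\rho\Phi(\rho,\omega)=-\psi(\rho\omega)/[(\alpha-\beta\rho^2)\rho]$ by the fundamental theorem of calculus, so that $-\abv\cdot\nabla_v\varphi=\psi$ exactly. The regularity is where the real work lies. Away from $\{0\}$ and $\rsphere$, differentiation under the integral sign gives $\varphi\in C^1$ directly. Near the origin, since $\psi$ vanishes for $|v|$ small, the formula yields $\varphi\equiv 0$ on a ball centred at $0$, which supplies both $\varphi(0)=0$ and $C^1$ smoothness there.

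The main obstacle is the gluing across the attracting sphere $\rsphere$, where $(\alpha-\beta|v|^2)|v|$ degenerates and where a careless forward or backward integration along the flow would produce a jump, since the characteristics on the two sides of the sphere are disconnected. I expect to resolve this exactly as the closed formula already does: because $\psi$ vanishes on a neighbourhood $\{\,\bigl|\,|v|-r\,\bigr|<\delta\,\}$ of the sphere, the radial derivative $\partial_\rho\Phi$ is identically zero there, so $\varphi(\rho\,\omega)$ is independent of $\rho$ across the sphere and reduces to a single $C^1$ function of the direction $\omega=\vsv$. This forces simultaneously continuity and the matching of the radial derivative (both vanish) across $\rsphere$, while the angular regularity follows from differentiating the $\rho$-integral, whose range stays away from the singularities. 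Assembling these local statements gives $\varphi\in C^1(\R^d)$, bounded, with $\varphi(0)=0$, as required.
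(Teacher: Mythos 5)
Your proposal is correct and follows essentially the same route as the paper, which does not prove the lemma in situ but defers to Lemma 3.1 of \cite{BosCar13}: there too the radial structure of the field $\abv \cdot \nabla_v$ reduces the equation to a scalar quadrature along each ray, which is exactly the explicit formula $\varphi(v) = -\int_0^{|v|} \psi\left(\rho \vsv\right)[(\alpha-\beta\rho^2)\rho]^{-1}\,\md\rho$ you wrote down. Your choice to integrate from $\rho = 0$ outward (rather than along forward time toward the attracting sphere $\rsphere$) is the right one, since it makes $\varphi$ vanish identically on a ball around the origin --- giving $\varphi(0)=0$ and $C^1$ regularity there, where a forward-time integral would in general be direction-dependent and discontinuous at $0$ --- while the hypothesis $\supp \psi \cap (\A) = \emptyset$ keeps the integrand supported away from the zeros of $(\alpha-\beta\rho^2)\rho$, yielding the uniform bound and the $C^1$ gluing across $\rsphere$ exactly as you argue.
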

In the sequel, we introduce a projection operator onto the
subspace of the constraints in \eqref{Equ8}. This construction
follows closely the gyro-average method in gyro-kinetic theory
\cite{BosAsyAna,BosTraEquSin,BosGuiCen3D,Bos12,BosCalQAM1,BosCalQAM2,BosAnisoDiff}.
An average operator serves to separate between two scales. For
example, in gyro-kinetic theory, two time scales exist: a fast
time variable, related to the rapid cyclotronic motion, and a slow
time variable, related to the parallel motion with respect to the
magnetic field. The gyro-average operator represents the average
of the fast dynamics over a cyclotronic period, provided that the
slow time variable is frozen. Following this technique, we obtain
an accurate enough but simpler model, from the numerical
approximation point of view. All the fluctuations have been
removed and replaced by averaged effects.

Our model \eqref{Equ6} presents not two, but three time variables:
$t, t/\eps$ and $t /\eps ^2$. The dynamics are dominated by the
self-propulsion/friction mechanism, introducing the fast time
variable $s = t/\eps ^2$. The average operator is related to the
characteristic flow of the field $\frac1{\eps ^2} \abv \cdot
\nabla _v$. This characteristic flow ${\mathcal V} = {\mathcal V}
(s;v)$, written with respect to $s = t/\eps ^2$
\[
\frac{\mathrm{d}{\mathcal V} }{\mathrm{d}s} = \abvs,\;\;{\mathcal V} (0;v) = v
\]
conserves the direction $\vsv$ and has as equilibria the elements
of $\A$. The Jacobian matrix is given by
\[
\partial _v \{\abv \} = (\alpha - \beta |v|^2) I_d - 2 \beta v\otimes
v\,.
\]
Being negative on $\rsphere$ and definite positive at $0$, we
deduce that the points of $\rsphere$ are stable equilibria, and
$0$ is an unstable equilibrium. For simplicity, we neglect the
measure of the unstable point $0$ in the velocity space and assume
that this is not present in the limit $\epsilon\to 0$ at any level
of the expansion. As we elaborate below, we will rigorously
compute the terms in the expansion needed to derive formally the
hydrodynamic equations. The complete mathematical analysis of the
limiting procedure is out of scope of this paper. We are mainly
interested in the two or three dimensional setting, but the same
arguments apply for any dimension $d\geq 2$. For the sake of
generality, we state and prove all the results in any dimension
$d\geq 2$, and we distinguish, if necessary, between the cases $d
= 2$ and $d\geq 3$.

Motivated by the previous observations, we define the average of a
non negative bounded measure cf. \cite{BosCar13}. We will denote
by $f(x,v)\;\mathrm{d}v \mathrm{d}x$ the integration against the
measure $f$. This is done independently of being the measure $f$
absolutely continuous with respect to the Lebesgue measure or not.

\begin{defi}$\;$\\
1. Let $F \in {\mathcal M}_b ^+ (\R^d)$ be a non negative bounded
measure on $\R^d$. We denote by $\ave{F}$ the measure
corresponding to the linear application
\[
\psi \to \intv{\psi(v)\,\ind{v = 0}  F(v) } +
\intv{\psi\left ( r \vsv \right ) \ind{v \neq 0} F(v)}\,,
\]
for all $\psi \in \czc$, {\it i.e.,}
\[
\intv{\psi(v) \ave{F}(v)} = \int _{v = 0} \psi(v)F(v)\,\dv + \int _{v \neq 0} \psi \left ( r \vsv \right )F(v)\,\dv\,,
\]
for all $\psi \in \czc$.\\
2.
Let $f \in \mbxv{}$ be a non negative bounded measure on $\R^d \times \R^d$. We denote by $\ave{f}$ the measure corresponding to the linear application
\[
\psi \to \intxv{\psi(x,v)\,\ind{v = 0}  f(x,v) } +
\intxv{\psi\left ( x , r \vsv \right ) \ind{v \neq 0} f(x,v)}\,,
\]
for all $\psi \in \czcxv$, {\it i.e.,}
\[
\intxv{\psi(x,v) \ave{f}(x,v)} = \int _{v = 0} \!\!\!\!\!\!\psi(x,v)
f(x,v)\,\dv \dx + \int _{v \neq 0} \!\!\!\!\!\!\psi \left ( x , r \vsv \right )
f(x,v)\,\dv \dx,
\]
for all $\psi \in \czcxv$.
\end{defi}
It is easily seen that the average of a non negative bounded
measure is a non negative bounded measure, with the same mass, but
supported in $\A$, $\R^d \times (\A)$ respectively. We have the
following characterization (see Proposition 5.1 \cite{BosCar13}).
\begin{pro} \label{VarChar}
Assume that $f$ is a non negative bounded measure on $\R^d \times
\R^d$. Then $\ave{f}$ is the unique measure $\tf$ satisfying $\supp
\tf \subset \R^d \times (\A)$,
\[
\int _{v\neq 0} \psi \left ( x , r \vsv \right )\tf(x,v)\,\dv \dx =
\int _{v \neq 0}\psi \left ( x , r \vsv \right
)f(x,v)\,\dv \dx,\;\;\psi \in \czcxv{}
\]
and $\tf = f$ on $\R^d \times \{0\}$.
\end{pro}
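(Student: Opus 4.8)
We need to show that $\langle f\rangle$ is the *unique* measure $\tilde f$ satisfying three conditions: the support constraint $\supp\tilde f\subset\R^d\times(\A)$, the moment-matching identity against test functions evaluated at $(x,r\,v/|v|)$, and the agreement $\tilde f=f$ on $\R^d\times\{0\}$. Let me sketch the proof plan.

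**Plan.**

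The plan is to split the problem into the existence part (that $\langle f\rangle$ indeed satisfies the three listed properties) and the uniqueness part (that any measure satisfying them must coincide with $\langle f\rangle$). The existence part should be essentially a direct verification against the definition.

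First I would verify that $\tf=\ave{f}$ satisfies the three conditions. The support condition $\supp\ave{f}\subset\R^d\times(\A)$ follows immediately from the definition, since the defining linear application pushes every velocity $v\neq0$ to $r\,v/|v|\in\rsphere$ and leaves $v=0$ fixed; this is exactly the remark made just before the statement. For the moment identity, I would take any $\psi\in\czcxv$ and evaluate $\intxv{\psi(x,r\,v/|v|)\,\ave{f}(x,v)}$ over the region $v\neq0$. Because $\ave{f}$ is supported in $\R^d\times(\A)$ and on $\rsphere$ we have $r\,\omega/|\omega|=\omega$, the test function $\psi(x,r\,v/|v|)$ restricted to the support equals $\psi(x,v)$ there; unwinding through the definition of $\ave{f}$ then gives exactly the right-hand side $\int_{v\neq0}\psi(x,r\,v/|v|)f(x,v)\,\dv\dx$. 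Finally, $\ave{f}=f$ on $\R^d\times\{0\}$ is built into the definition, since the $v=0$ part of the linear application is simply $\psi\mapsto\int_{v=0}\psi(x,v)f(x,v)\,\dv\dx$.

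For uniqueness, suppose $\tf$ is any measure satisfying the three conditions. I would decompose $\tf=\tf\big|_{\R^d\times\{0\}}+\tf\big|_{\R^d\times\rsphere}$, which is legitimate because $\supp\tf\subset\R^d\times(\A)$ and the two pieces $\{0\}$ and $\rsphere$ are disjoint. The third condition pins down the part on $\R^d\times\{0\}$ to be $f$ itself there, which agrees with $\ave{f}$. It remains to identify the part on $\R^d\times\rsphere$. For $\psi\in\czcxv$, on the sphere one has $r\,v/|v|=v$, so the moment identity reads $\int_{v\neq0}\psi(x,v)\,\tf(x,v)\,\dv\dx=\int_{v\neq0}\psi(x,r\,v/|v|)f(x,v)\,\dv\dx$, and the same computation gives the identical value for $\ave{f}$; since this holds for all $\psi\in\czcxv$ and the test functions restricted to $\R^d\times\rsphere$ are dense enough to separate measures on that set, the two spherical parts coincide. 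Assembling both pieces yields $\tf=\ave{f}$.

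**Main obstacle.** The delicate point is the density/separation argument in the uniqueness step: the moment identity only tests $\tf$ against functions of the special form $\psi(x,r\,v/|v|)$, and a priori these need not separate arbitrary measures on $\R^d\times\R^d$. The resolution is that both $\tf$ and $\ave{f}$ are already known to be supported on $\R^d\times\rsphere$ (away from $v=0$), where $r\,v/|v|=v$, so on the support the map $\psi(x,\cdot)\mapsto\psi(x,r\,\cdot/|\cdot|)$ is just the restriction of a generic test function; hence the full class $\czcxv$ restricted to $\R^d\times\rsphere$ does separate measures there. Making this rigorous — exhibiting that every continuous compactly supported function on $\R^d\times\rsphere$ extends to an element of $\czcxv$ and invoking the Riesz representation to conclude equality of measures — is the one step requiring care; everything else is bookkeeping against the definition.
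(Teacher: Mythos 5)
Your proof is correct, but note first that this paper contains no proof of Proposition \ref{VarChar} to compare against: the statement is quoted from Proposition 5.1 of \cite{BosCar13}. Judged on its own, your two-step plan is exactly the natural route and matches what the cited reference does in spirit: existence is a direct unwinding of the definition of $\ave{f}$ (using that on $\rsphere$ one has $r\vsv = v$, so the special test functions act as plain restrictions on the support), and uniqueness follows by splitting $\tf$ over the disjoint closed sets $\R^d \times \{0\}$ and $\R^d \times \rsphere$, pinning the first piece by the third condition and the second by the moment identity. You also correctly isolated the only genuinely delicate point, namely that the functions $(x,v)\mapsto \psi\left(x, r \vsv\right)$, $\psi \in \czcxv$, must separate measures supported on $\R^d \times \rsphere$. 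To close it, observe that any $g \in C^0_c(\R^d \times \rsphere)$ extends to an element of $\czcxv$ by setting $\psi(x,v) = g\left(x, r\vsv\right)\theta(|v|)$ with $\theta$ continuous, $\theta(r)=1$, $\theta$ vanishing near $0$ and outside a compact annulus; then Riesz representation gives equality of the spherical parts. The same cutoff $\theta$ also cleans up the one cosmetic issue in your write-up: $(x,v)\mapsto \psi\left(x,r\vsv\right)$ is discontinuous at $v=0$ and not compactly supported, so statements such as $\ave{f} = f$ on $\R^d\times\{0\}$ and the evaluation of $\ave{f}$ on $\{v \neq 0\}$ are most cleanly justified by testing against $\psi(x,v)\theta_n(|v|)$ and passing to the limit by dominated convergence; this is routine bookkeeping, as you anticipated, and does not affect the validity of the argument.
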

A direct consequence of Proposition \ref{VarChar} is that any
bounded, non negative measure, supported in $\R^d \times (\A)$ is
left unchanged by the average operator. Another property of the
average operator is that it removes any measure of the form $\Divv
\{f \abv \}$, cf. Proposition 5.2 \cite{BosCar13}. 
\begin{pro}
\label{Elimination}
For any $f \in \mbxv{}$ such that $ \Divv \{f \abv
\}\in {\cal M}_b (\R^d \times \R^d)$, we have $\ave{\Divv \{f \abv
\}} = 0$.
\end{pro}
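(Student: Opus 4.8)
The plan is to test the candidate $\ave{\Divv\{f\abv\}}$ against an arbitrary $\psi\in\czcxv$ and to show the resulting pairing vanishes; since the average is defined by its linear action on $\czcxv$ and this action extends verbatim to signed bounded measures, this suffices. Writing $g:=\Divv\{f\abv\}\in{\cal M}_b(\R^d\times\R^d)$, the definition of the average gives
\[
\intxv{\psi(x,v)\,\ave{g}(x,v)} = \int_{v=0}\!\psi(x,0)\,g(x,v)\,\dv\dx + \int_{v\neq 0}\!\psi\!\left(x,r\vsv\right) g(x,v)\,\dv\dx =: \intxv{\overline{\psi}\,g},
\]
where $\overline{\psi}$ is the bounded Borel function equal to $\psi(x,r v/|v|)$ for $v\neq 0$ and to $\psi(x,0)$ at $v=0$. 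The guiding observation is that $\overline{\psi}$ is constant along the characteristics of the radial field $\abv\cdot\nabla_v$: using $\nabla_v\!\left[\psi(x,r v/|v|)\right]=\tfrac{r}{|v|}\imvv\nabla_\omega\psi$ together with $v^{\top}\imvv=0$, one gets $\abv\cdot\nabla_v\overline{\psi}=0$ for every $v\neq 0$. Formally, then, $\intxv{\overline{\psi}\,g}=\intxv{\overline{\psi}\,\Divv\{f\abv\}}=-\intxv{\abv\cdot\nabla_v\overline{\psi}\;f}=0$.

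The difficulty is that $\overline{\psi}$ is neither continuous at $v=0$ nor compactly supported in $v$, so it is not an admissible test function for the distributional identity defining $g$. First I would regularize near the origin by a radial cutoff $\chi_\delta\in C^1$ with $\chi_\delta\equiv 0$ on $\{|v|\leq\delta\}$, $\chi_\delta\equiv 1$ on $\{|v|\geq 2\delta\}$ and $\chi_\delta'(\delta)=0$, and near infinity by a radial cutoff $\kappa_R\in C^1_c$ with $\kappa_R\equiv 1$ on $\{|v|\leq R\}$, $\kappa_R\equiv 0$ on $\{|v|\geq 2R\}$. Setting
\[
\Phi_{\delta,R}(x,v):=\kappa_R(v)\left\{\psi(x,0)\bigl(1-\chi_\delta(v)\bigr)+\psi\!\left(x,r\vsv\right)\chi_\delta(v)\right\}\in\cocxv ,
\]
the flow-invariance above guarantees that $\abv\cdot\nabla_v\Phi_{\delta,R}$ is supported only in the two annuli $\{\delta\leq|v|\leq 2\delta\}$ and $\{R\leq|v|\leq 2R\}$, carrying the weights $\chi_\delta'$ and $\kappa_R'$ respectively, with all other contributions killed by $v^{\top}\imvv=0$.

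Finally I would invoke the divergence identity $\intxv{\Phi_{\delta,R}\,g}=-\intxv{\abv\cdot\nabla_v\Phi_{\delta,R}\;f}$ and pass to the limit. The inner annulus contributes at most $\|\psi\|_\infty\,f(\{0<|v|\leq 2\delta\})$, which tends to $0$ as $\delta\searrow 0$ by finiteness of $f$ and continuity of the measure from above; the outer annulus contributes at most $\|\psi\|_\infty\,R^2\,f(\{|v|\geq R\})$, using $|\abv\cdot\nabla_v\kappa_R|\lesssim|v|^3/R$, which tends to $0$ as $R\to+\infty$ thanks to the decay of $f$ at infinity available in the present setting (finite second velocity moment, as in Proposition \ref{FirstConstraint}). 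Letting $R\to+\infty$ and then $\delta\searrow 0$, dominated convergence for the finite measure $g$ yields $\intxv{\Phi_{\delta,R}\,g}\to\intxv{\overline{\psi}\,g}$, while the right-hand side tends to $0$; hence $\intxv{\psi\,\ave{g}}=0$ for every $\psi\in\czcxv$, i.e. $\ave{g}=0$. The main obstacle is precisely the simultaneous control of these two commutator terms — proving that neither the origin annulus nor the far-field annulus survives in the limit — which is where the integrability of $f$ (vanishing mass near $v=0$ and sufficient decay as $|v|\to\infty$) is indispensable.
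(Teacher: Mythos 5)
Your proof is correct in substance, and it is worth noting that the paper itself supplies no argument for Proposition \ref{Elimination}: the statement is quoted from Proposition 5.2 of \cite{BosCar13}, where the elimination property is obtained dynamically, by pairing the measure with test functions transported along the characteristic flow ${\mathcal V}(s;v)$ of $\abv\cdot\nabla _v$ (the identity $\abv\cdot\nabla _v \left[\psi(x,{\mathcal V}(s;v))\right]=\partial _s \left[\psi(x,{\mathcal V}(s;v))\right]$, followed by the limit $s\to+\infty$, where ${\mathcal V}(s;v)\to r v/|v|$). Your route is genuinely different and more static: you observe once and for all that $\overline{\psi}$ is a first integral of the field, and you pay for its discontinuity at $v=0$ and its lack of compact support with two radial cutoffs, estimating the two commutator terms explicitly. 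What this buys is a self-contained proof with quantitative error terms (the inner annulus controlled by $f(\R^d\times\{0<|v|\leq 2\delta\})$, the outer one by the tail of the kinetic energy); what the flow argument buys is that no ad hoc cutoff construction is needed, the invariant function being reached as a monotone limit along characteristics. Two cosmetic repairs to your write-up: you should run the argument for $\psi\in\cocxv$ (you differentiate $\psi$ when forming $\Phi_{\delta,R}$) and conclude for all of $\czcxv$ by density, since $\ave{g}$ is a bounded measure; and the identity $\intxv{\Phi_{\delta,R}\,g}=-\intxv{\abv\cdot\nabla _v \Phi_{\delta,R}\,f}$ for merely $C^1_c$ test functions follows from the smooth case by mollification. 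Your continuity-from-above argument on the inner annulus is complete as stated; no assumption on the mass of $f$ near $v=0$ is needed there.

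The one substantive point is the second velocity moment, and here you were right to flag it as indispensable rather than pass over it: the proposition is actually false under its literal hypotheses, so your importing the assumption $(1+|v|^2)f\in\mbxv$ (as in Proposition \ref{FirstConstraint}) is necessary, not optional. Indeed, take $\rho\in C^\infty([0,\infty))$, $\rho\geq 0$, $\rho\equiv 0$ on $[0,\tfrac{3r}{2}]$, $\rho(s)=s^{-d}(\beta s^2-\alpha)^{-1}$ for $s\geq 2r$, and $f=\eta(x)\,\rho(|v|)\,\dx\,\dv$ with $0\leq \eta\in\czc$, $\int_{\R^d}\eta\,\dx=1$. Then $f\in\mbxv$, and for $|v|\geq 2r$ one has $f\abv=-\eta(x)\,v\,|v|^{-d}$, which is divergence free in $v$; hence $g:=\Divv\{f\abv\}$ is smooth and supported in $\R^d\times\{\tfrac{3r}{2}\leq|v|\leq 2r\}$, so certainly $g\in{\mathcal M}_b(\R^d\times\R^d)$, but its total mass equals the flux of $f\abv$ at infinity, namely $-\bar{\omega}_d\neq 0$. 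Since the average operator preserves total mass (test against $\psi(x,v)=\eta_1(x)\,\xi(|v|)$ with $\eta_1=1$ on $\supp \eta$ and $\xi=1$ on $[0,2r]$, so that $\overline{\psi}=\eta_1$), we get $\ave{g}\neq 0$; of course $\int\!\!\int|v|^2\,f\,\dv\,\dx=+\infty$ here. So your proof should be read as a proof under the paper's standing finite-kinetic-energy setting, in which all measures to which Proposition \ref{Elimination} is applied do satisfy $(1+|v|^2)f\in\mbxv$, and there your bound $R^2 f(\R^d\times\{|v|\geq R\})\leq \int\!\!\int_{\{|v|\geq R\}}|v|^2 f\,\dv\,\dx\to 0$ closes the argument.
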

The above proposition plays a crucial role when eliminating the
Lagrange multiplier $\ftw$ in \eqref{Equ10}. Indeed, for doing
that, it is enough to average both hand sides in \eqref{Equ10}. By
the constraint \eqref{Equ8}, we know that $f$ is supported in
$\R^d \times (\A)$, and thus is left invariant by the average. We
check that $\ave{\partial _t f } = \partial _t \ave{f} = \partial
_t f$, and thus, averaging \eqref{Equ10} still leads to a
evolution problem for $f$
\begin{align}
\label{Equ12}
\partial _t f + \ave{\Divx(fv)} & =
\ave{ \mathcal{L}_f(f^{(1)}) }.
\end{align}
Certainly, a much more difficult task is to eliminate the Lagrange
multiplier $\fo$. We expect that this can be done thanks to the
constraint in \eqref{Equ9}. The solvability of \eqref{Equ9}, with
respect to $\fo$, depends on a compatibility condition, to be
satisfied by the right hand side. Indeed, by Proposition
\ref{Elimination}, we should have
\begin{equation}
\label{Equ14} \ave{\Divv\{f (v - u[f]) + \sigma \nabla _v f \}} =
\ave{\Divv \{ \fo \abv\}}=0
\end{equation}
saying that $f$ is a equilibrium for the average collision kernel
$\ave{Q(f)} = 0$. The equilibria of the average collision kernel
form a $d-1$-dimensional manifold, that is one dimension less than
the equilibria manifold of the Fokker-Planck operator $Q$ (see
also \cite{DM08,FL11}). For any $l \in \R_+, \Omega \in \sphere$,
we introduce the von Mises-Fisher distribution
\[
\mlo (\omega)  \;\dom= \frac{\exp \left ( l \Omega \cdot \frac{\omega}{r} \right )}{\intopr{\exp \left ( l \Omega \cdot \frac{\omega ^\prime}{r} \right )}}\;\dom,\;\;\omega \in \rsphere.
\]
\begin{pro}
\label{AveEqui}
Let $F \in {\mathcal M}_b ^+(\R^d)$ be a non negative bounded measure on $\R^d$, supported in $\rsphere$. The following statements are equivalent:\\
1. $\ave{Q(F)} = 0$, that is
\[
\int _{v \neq 0} \left \{-(v- u[F])\cdot \nabla _v \left [ \tpsi
\left ( r \vsv\right ) \right ] + \sigma \Delta _v \left [ \tpsi
\left ( r \vsv\right ) \right ]   \right \} F\;\dv = 0,
\]
for all $\tpsi \in C^2(\rsphere)$.

\noindent 2. There are $\rho \in \R_+, \Omega \in \sphere$ such
that $F = \rho \mlo \dom$ where $l \in \R_+$ satisfies
\begin{equation}\label{kkk}
\frac{\int _0 ^\pi \cos \theta \;e ^{l \cos \theta} \sin ^{d-2}
\theta\;\mathrm{d}\theta}{\int _0 ^\pi e ^{l \cos \theta} \sin
^{d-2} \theta\;\mathrm{d}\theta} = \frac{\sigma }{r^2} l.
\end{equation}
\end{pro}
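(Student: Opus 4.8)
The plan is to read statement 1 as the weak form of a stationary linear Fokker--Planck equation on the sphere $\rsphere$ and to identify its nonnegative equilibria by a relative-entropy (energy) argument. The first step is to transport the Euclidean operators onto the sphere. Since $\tpsi\!\left(r\vsv\right)$ is homogeneous of degree zero in $v$, its Euclidean gradient is tangent to the spheres $|v|=$ const and, on $|v|=r$, coincides with the tangential gradient $\nabla_\omega\tpsi$; writing $\Delta_v$ in polar coordinates and using that $\tpsi\!\left(r\vsv\right)$ is radially constant gives $\Delta_v\!\left[\tpsi\!\left(r\vsv\right)\right]\big|_{|v|=r}=\Delta_\omega\tpsi$, the Laplace--Beltrami operator on $\rsphere$. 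Because $\mathrm{supp}\,F\subset\rsphere$ and $\nabla_\omega\tpsi\perp\omega$, the drift simplifies to $-(\omega-u[F])\cdot\nabla_\omega\tpsi=u[F]\cdot\nabla_\omega\tpsi$, so statement 1 is equivalent to $\into{\{u[F]\cdot\nabla_\omega\tpsi+\sigma\Delta_\omega\tpsi\}F}=0$ for all $\tpsi\in C^2(\rsphere)$. Integrating by parts on the closed manifold $\rsphere$, this is in turn equivalent to the distributional identity $\Divo(\sigma\nabla_\omega F-F\imoo u[F])=0$, with $u[F]$ a fixed constant vector.

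For the implication $2\Rightarrow1$ I would verify directly that the von Mises--Fisher profile annihilates the flux $J:=\sigma\nabla_\omega F-F\imoo u[F]$. Since $\imoo u[F]=\nabla_\omega(u[F]\cdot\omega)$, setting $\Phi(\omega):=(u[F]\cdot\omega)/\sigma$ gives the factorization $J=\sigma e^{\Phi}\nabla_\omega(Fe^{-\Phi})$. For $F=\rho\,\mlo\,\dom$ one computes $u[F]$ by evaluating $u[F]\cdot\Omega$ in polar coordinates about $\Omega$, which produces exactly the quotient of $\theta$-integrals in \eqref{kkk}; thus $u[F]=\frac{\sigma l}{r}\Omega$ precisely when $l$ obeys \eqref{kkk}. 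Then $\Phi=\frac{l}{r}\Omega\cdot\omega=\log\mlo+\text{const}$, so $Fe^{-\Phi}$ is constant, $J\equiv0$, and statement 1 holds.

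The substantive direction is $1\Rightarrow2$. First I would upgrade the measure $F$ to a smooth density: the distributional equation reads $\sigma\Delta_\omega F=\Divo(F\imoo u[F])$ with $\imoo u[F]$ a smooth vector field, so inverting the elliptic self-adjoint operator $\Delta_\omega$ modulo constants writes $F=\sigma^{-1}\Delta_\omega^{-1}\Divo(F\imoo u[F])$; as $\Delta_\omega^{-1}\Divo$ is smoothing of order $-1$, a standard bootstrap yields $F\in C^\infty(\rsphere)$, $F\ge0$. With $F$ smooth I would run the energy argument: testing $\Divo J=0$ against $Fe^{-\Phi}$ and integrating by parts gives $0=-\sigma\into{e^{\Phi}|\nabla_\omega(Fe^{-\Phi})|^2}$, whence $Fe^{-\Phi}$ is constant and $F=\rho'\exp\!\big((u[F]\cdot\omega)/\sigma\big)$. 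Putting $\Omega=u[F]/|u[F]|$ and $l=r|u[F]|/\sigma$ identifies $F=\rho\,\mlo\,\dom$; self-consistency of $u[F]$—that the mean of this explicit profile equals $u[F]$—forces $|u[F]|=\frac{\sigma l}{r}$ to equal $r\,\intth{\cos\theta\,e^{l\cos\theta}\sin^{d-2}\theta}\big/\intth{e^{l\cos\theta}\sin^{d-2}\theta}$, which is exactly \eqref{kkk}. The degenerate case $u[F]=0$ gives the uniform distribution $l=0$, consistent with \eqref{kkk} since the numerator vanishes by symmetry.

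The main obstacle I anticipate is the passage from the measure-valued weak solution to a smooth density: the energy identity that pins down the exponential profile manipulates $Fe^{-\Phi}$ and $\nabla_\omega(Fe^{-\Phi})$ pointwise, so it only becomes meaningful once elliptic regularity has promoted $F$ from a Radon measure on $\rsphere$ to a genuine $C^\infty$ function. Everything afterwards—the entropy-dissipation equality and the self-consistency computation producing \eqref{kkk}—is robust and essentially computational.
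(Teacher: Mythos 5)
Your proposal is correct, but for the substantive direction $1\Rightarrow 2$ it takes a genuinely different route from the paper. You regularize first and then run an entropy-dissipation argument: the stationary equation $\Divo\bigl(\sigma\nabla_\omega F - F\imoo u[F]\bigr)=0$ has smooth coefficients (once $F$ is fixed, $u[F]$ is just a constant vector), so the negative-order Sobolev bootstrap on the compact sphere does promote the Radon measure $F$ to a $C^\infty$ density --- your one anticipated obstacle is real but your fix is sound, modulo the routine care of inverting $\Delta_\omega$ on mean-zero distributions --- and then testing against $Fe^{-\Phi}$ forces $\nabla_\omega(Fe^{-\Phi})\equiv 0$. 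The paper instead never regularizes $F$ at all: it works by duality, solving the \emph{adjoint} problem $-\Divo(M\nabla_\omega\tpsi)=M\tvarphi$ via Lax--Milgram in a weighted $H^1(\rsphere)$ with a Poincar\'e inequality, for every continuous $\varphi$ with $\into{\varphi(\omega)M(\omega)}=0$, concluding $\intv{\varphi(v)F}=0$ for all such $\varphi$; proportionality of the two linear forms (Lemma III.2 of Br\'ezis) then yields $F=\tilde C\,M\,\dom$ directly as an identity of measures. What each buys: the paper's argument stays entirely within the measure-valued functional setting that is the point of the whole article, trading your elliptic regularity for solvability of one elliptic problem with \emph{smooth} data; your argument is more self-contained from the Fokker--Planck viewpoint and gives smoothness and strict positivity of the equilibrium density as byproducts. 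Both proofs then coincide in the final self-consistency step, where computing $u[F]$ for the exponential profile in polar coordinates about $\Omega$ produces exactly the fixed-point relation \eqref{kkk}, and both handle the degenerate cases $\rho=0$ and $u[F]=0$ (giving $l=0$) in the same way. Your $2\Rightarrow 1$ verification via the flux factorization $J=\sigma e^{\Phi}\nabla_\omega(Fe^{-\Phi})$ is essentially the paper's computation $-\sigma\Divo(\mlo\nabla_\omega\tpsi)$ in disguise.
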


\vskip6pt

The modulus of the mean velocity is not a coordinate on the
equilibria manifold, but it is determined by the condition $ |u| =
\tfrac{\sigma l}{r}$ where $l$ satisfies \eqref{kkk}. Clearly $l =
0$ is a solution, which corresponds to the isotropic equilibrium
\[
F = \rho M_{0 \Omega} \;\dom = \rho \frac{\dom}{\bar\omega _d r
^{d-1}}
\]
where $\bar\omega _d$ represents the area of the unit sphere in
$\R^d$. The next proposition is essentially contained in
Proposition 3.3 in \cite{FL11}. We present a simplified proof,
based on computations with Bessel functions. 
\begin{pro}
\label{IsotropicEquilibrium}
Let $\lambda : \R_+ \to \R$ be the function given by
\[
\lambda (l) = \frac{\intth{\cos \theta e ^{l \cos \theta} \sin ^{d-2}\theta}}{\intth{e ^{l \cos \theta} \sin ^{d-2} \theta}},\;\;l \in \R_+,\;\;d \geq 2.
\]
The function $\lambda $ is strictly increasing, strictly concave and verifies
\[
\lambda (0) = 0,\;\;\lambda ^{\prime}(0) = \frac{1}{d},\;\;\lim_{l \to +\infty} \lambda (l) = 1.
\]
If $\frac{\sigma }{r^2} \geq \frac{1}{d}$, then the only solution of $\lambda (l) = \frac{\sigma }{r^2} l$ is $l = 0$. If $\frac{\sigma}{r^2} \in ]0,\frac{1}{d}[$, then there is a unique $l = l \left ( \frac{\sigma}{r^2}\right) >0$ such that $\lambda (l) = \frac{\sigma}{r^2}l$.
\end{pro}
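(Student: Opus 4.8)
The plan is to recognize $\lambda$ as the logarithmic derivative of a partition function. Writing $Z(l)=\intth{e^{l\cos\theta}\sin^{d-2}\theta}$, we have $\lambda=Z'/Z=(\log Z)'$, so that $\lambda(l)$, $\lambda'(l)$, $\lambda''(l)$ are exactly the mean, the variance and the third central moment of the random variable $\cos\theta$ under the probability measure $\md\mu_l=Z(l)^{-1}e^{l\cos\theta}\sin^{d-2}\theta\,\md\theta$ on $[0,\pi]$; equivalently, through the classical integral representation of the modified Bessel function, $\lambda(l)=I_{d/2}(l)/I_{(d-2)/2}(l)$. Three of the stated facts are then immediate. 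Since $\mu_0$ is symmetric about $\theta=\pi/2$, we get $\lambda(0)=\mathbb{E}_0[\cos\theta]=0$. Since $\lambda'=\mathrm{Var}_l(\cos\theta)$ and $\cos\theta$ is not $\mu_l$-a.s.\ constant, $\lambda'>0$, which is the strict monotonicity; in particular $\lambda>0$ on $(0,+\infty)$, and $\lambda<1$ trivially. Finally $\lambda'(0)=\mathrm{Var}_0(\cos\theta)=1-\intth{\sin^{d}\theta}/\intth{\sin^{d-2}\theta}=1-\tfrac{d-1}{d}=\tfrac1d$ by the Wallis reduction formula.

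The engine of the remaining claims is the Riccati identity
\[
\lambda'(l)=1-\lambda(l)^2-\frac{d-1}{l}\,\lambda(l).
\]
I would obtain it either from the recurrences $2I_\nu'=I_{\nu-1}+I_{\nu+1}$ and $I_{\nu-1}-I_{\nu+1}=\tfrac{2\nu}{l}I_\nu$, or, more elementarily, by integrating $\tfrac{\md}{\md x}\{e^{lx}(1-x^2)^{(d-1)/2}\}$ over $[-1,1]$ after the change of variables $x=\cos\theta$ (the boundary terms vanish for $d\ge2$), which gives $\mathbb{E}_l[\cos^2\theta]=1-\tfrac{d-1}{l}\lambda$ and hence the identity through $\lambda'=\mathbb{E}_l[\cos^2\theta]-\lambda^2$. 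Once concavity is known (below), the Riccati identity also yields the limit: $\lambda$ is increasing, bounded by $1$, and has a decreasing derivative, so $\lambda'(l)\to0$; letting $l\to+\infty$ in the identity forces $1-(\lim\lambda)^2=0$, whence $\lim_{l\to+\infty}\lambda(l)=1$.

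Strict concavity is the crux, and I expect it to be the main obstacle: differentiating the Riccati identity gives
\[
\lambda''=-\lambda'\Bigl(2\lambda+\frac{d-1}{l}\Bigr)+\frac{d-1}{l^2}\lambda,
\]
which is not negative for arbitrary admissible $(\lambda,\lambda')$---it fails near $\lambda'=0$---so a pointwise bound is hopeless and a trajectory argument is required. The decisive computation is that at any $l_0>0$ with $\lambda''(l_0)=0$ one has the relation $\lambda'\bigl(2\lambda+\tfrac{d-1}{l_0}\bigr)=\tfrac{d-1}{l_0^2}\lambda$; substituting it into the expression for $\lambda'''$ makes almost everything cancel and leaves
\[
\lambda'''(l_0)=-2\lambda'\Bigl(\lambda'+\frac{2\lambda}{l_0}\Bigr)<0,
\]
using $\lambda,\lambda'>0$. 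Thus $\lambda''$ is strictly decreasing at each of its zeros, so it cannot cross zero upward. A short Taylor expansion at the origin, $\lambda(l)=\tfrac{l}{d}-\tfrac{l^3}{d^2(d+2)}+O(l^5)$, shows $\lambda''(l)<0$ for small $l>0$; therefore, setting $l_0=\inf\{l>0:\lambda''(l)=0\}$, if $l_0$ were finite then $\lambda''$ would approach $0$ from below and satisfy $\lambda'''(l_0)\ge0$, contradicting the display. Hence $\lambda''<0$ on $(0,+\infty)$, so $\lambda'$ is strictly decreasing and $\lambda$ strictly concave on $[0,+\infty)$.

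The solvability statements then follow by elementary convex analysis. Put $g(l)=\lambda(l)-\tfrac{\sigma}{r^2}l$, so that $g(0)=0$, $g'(0)=\tfrac1d-\tfrac{\sigma}{r^2}$, and $g$ is strictly concave with strictly decreasing derivative. If $\tfrac{\sigma}{r^2}\ge\tfrac1d$ then $g'(0)\le0$, hence $g'<0$ and $g<0$ on $(0,+\infty)$, leaving $l=0$ as the only root. If $0<\tfrac{\sigma}{r^2}<\tfrac1d$ then $g'(0)>0$ while $g(l)\to-\infty$ as $l\to+\infty$ (because $\lambda$ is bounded), so by strict concavity $g$ increases, attains a single maximum, and crosses zero exactly once on $(0,+\infty)$, producing the unique positive solution $l=l(\sigma/r^2)$.
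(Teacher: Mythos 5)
Your proof is correct, and its skeleton is the same as the paper's: both read $\lambda$ as the logarithmic derivative of $\pi \beta_0(l) = \intth{e^{l\cos\theta}\sin^{d-2}\theta}$ (your variance identity $\lambda'(l)=\mathrm{Var}_l(\cos\theta)>0$ is exactly the paper's strict Cauchy--Schwarz inequality $(\beta_0')^2 < \beta_0\,\beta_0''$ in probabilistic dress), both compute $\lambda(0)=0$ and $\lambda'(0)=1/d$ by the same Wallis-type reductions, both run everything through the Riccati identity $\lambda' = 1 - \frac{d-1}{l}\lambda - \lambda^2$ (the paper derives it from the Bessel-type ODE $l^2\beta_0'' + (d-1)l\beta_0' = l^2\beta_0$, you by integrating the exact derivative of $e^{lx}(1-x^2)^{\frac{d-1}{2}}$ --- the same integration by parts), and your treatment of the limit $\lambda \to 1$ and of the intersection with the line $l \mapsto \frac{\sigma}{r^2}l$ coincides with the paper's. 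The one genuine divergence is the concavity step, where your argument is a strict improvement. The paper disposes of it in a single sentence (``as $\lambda$ is positive and strictly increasing, we deduce that $\lambda$ is strictly concave''), which, as you rightly observe, does not follow pointwise: $\lambda'' = -\lambda'\left(2\lambda + \frac{d-1}{l}\right) + \frac{d-1}{l^2}\lambda$ has no definite sign under the constraints $\lambda, \lambda' > 0$ alone. Your trajectory argument closes this gap: I verified the cancellation giving $\lambda'''(l_0) = -2\lambda'\left(\lambda' + \frac{2\lambda}{l_0}\right) < 0$ at any zero $l_0>0$ of $\lambda''$, and the expansion $\lambda(l) = \frac{l}{d} - \frac{l^3}{d^2(d+2)} + O(l^5)$ does give $\lambda''<0$ near the origin; since $\beta_0$ is positive and entire, $\lambda$ is real-analytic, so the first-zero argument is legitimate and yields $\lambda'' < 0$ on $]0,+\infty[$. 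In short: correct, same global route, but with a rigorous repair of the paper's tersest step, at the modest price of needing three derivatives of $\lambda$ instead of one.
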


In order to find the equations for the evolution of the density
$\rho$ and orientation $\Omega$, we need to find $f^{(1)}$ from
\eqref{Equ9} in order to feed the terms needed in \eqref{Equ12}.
However, we will see that this is not possible. We will need to
introduce a notion of generalized collision invariants, quite
related intuitively to the one introduced in
\cite{DM08,DFL10,DFL13}, in our functional setting of measures
supported in $\rsphere$ to avoid the computation of the full
$f^{(1)}$. This is the main technical difficulty due to the
measure functional setting since the precise definition of
generalized collision invariant we need is more involved than in
\cite{DM08,DFL10,DFL13}. Let us mention that this notion of
generalized collision invariant has been used in other related
models in collective dynamics \cite{DMY,DFM,DFMT} and in kinetic
models of wealth distribution \cite{DLR}.

Our main result establishes the macroscopic equations satisfied by
the density $\rho$ and orientation $\Omega$, which parameterize
the von Mises-Fisher equilibrium, obtained when passing to the
limit for $\eps \searrow 0$ in \eqref{Equ6}. We retrieve exactly
the limit SOH hydrodynamic model in \cite{DFL10}, written for any
space dimension $d\geq 2$ with the same explicit constants.
\begin{theorem}
\label{MainResult1}
For any $\sigma, r$ such that $\frac{\sigma}{r^2} \in ]0,\frac{1}{d}[$, we denote by $l = l \left ( \frac{\sigma}{r^2}\right)$ the unique positive solution of $\lambda (l) = \frac{\sigma}{r^2}l$. Let $\fin \in \mbxv$ be a non negative bounded measure on $\R^d \times \R^d, d\geq 2$. For any $\eps >0$ we consider the problem
\begin{align}
\label{Equ71}
\partial _t \fe + \Divx(\fe v) + \frac{1}{\eps^2} \Divv  ( \fe \abv ) & = \frac{1}{\eps} \Divv \{\fe (v - u[\fe]) + \sigma \nabla _v \fe\}
\end{align}
for all $(t,x,v) \in \R_+ \times \R ^d \times \R^d$ with $\fe (0)
= \fin$, $(x,v) \in \R^d \times \R^d$. Therefore the limit
distribution $f = \lime \fe$, is a von Mises-Fisher equilibrium $f
= \rho \mlo (\omega) \;\dom$ on $\rsphere$, where the density
$\rho (t,x)$ and the orientation $\Omega (t,x)$ satisfy the
macroscopic equations
\begin{equation}
\label{Equ73}
\partial _t \rho + \Divx \left (\rho  \frac{l\sigma}{r} \Omega\right ) = 0,\;\;(t,x) \in \R_+ \times \R^d
\end{equation}
\begin{equation}
\label{Equ74}
\partial _t \Omega + k_d \;r ( \Omega \cdot \nabla _x) \Omega + \frac{r}{l} (I_d - \Omega \otimes \Omega) \frac{\nabla _x \rho }{\rho} = 0
\end{equation}
with the initial conditions
\[
\rho(0,x) = \intv{\fin(x)},\;\;\Omega (0,x) = \frac{\intv{v \fin (x)}}{\left|\intv{v \fin (x)}\right|},\;\;x \in \R^d
\]
where
\[
k_d = \frac{\intth{e ^{l \cos \theta} \chi (\cos \theta) \cos \theta \sin ^{d-1} \theta}}{\intth{e ^{l \cos \theta} \chi (\cos \theta) \sin ^{d-1} \theta}}
\]
and $\chi$ solves
\begin{equation*}
-\frac{\sigma}{r^2} \frac{\mathrm{d}}{\mathrm{d}c} \left \{e^{lc}
\chi^{\;\prime} (c) (1-c^2)^{\frac{1}{2}}   \right \} =
re^{lc},\;\;c \in ]-1,1[,\;\;\chi (-1) = \chi (1) = 0\;\mbox{ if }
d = 2
\end{equation*}
and
\begin{align*}
-\frac{\sigma}{r^2} \frac{\mathrm{d}}{\mathrm{d}c} \left \{e^{lc} \chi^{\;\prime} (c) (1-c^2)^{\frac{d-1}{2}} \right \}
+ (d-2) \frac{\sigma}{r^2}e^{lc} \chi (c) (1-c^2)^{\frac{d-5}{2}}& = re^{lc}(1-c^2)^{\frac{d-2}{2}}\\
& \;c \in ]-1,1[,\; d \geq 3.
\end{align*}
\end{theorem}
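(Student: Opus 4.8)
The plan is to carry out a Hilbert-type expansion, substituting the Ansatz \eqref{FormalExpansion} into \eqref{Equ71} and matching powers of $\eps$, exactly as recorded in \eqref{Equ8}--\eqref{Equ10}. First I would read off the consequence of the leading order relation \eqref{Equ8}: by Proposition \ref{FirstConstraint} the velocity distribution $f(t,x,\cdot)$ is a bounded measure supported in $\A$, and neglecting the unstable point $\{0\}$ it lives on $\rsphere$. The next step is to extract the solvability condition for \eqref{Equ9}. Since its left hand side is of the form $\Divv\{\fo\abv\}$, Proposition \ref{Elimination} forces the averaged right hand side to vanish, which is precisely \eqref{Equ14}, namely $\ave{Q(f)} = 0$. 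Invoking Proposition \ref{AveEqui} then pins down $f = \rho\,\mlo(\omega)\,\dom$ with $l$ a root of \eqref{kkk}, while Proposition \ref{IsotropicEquilibrium} selects the unique positive root $l = l(\sigma/r^2)$ in the regime $\sigma/r^2\in\,]0,1/d[$. This fixes the shape of the limit and reduces the unknowns to the scalar density $\rho(t,x)$ and the unit orientation $\Omega(t,x)$.

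With the form of $f$ known, I would average equation \eqref{Equ10}. Proposition \ref{Elimination} annihilates the second Lagrange multiplier $\ftw$, and since $f$ is already supported on the sphere one has $\ave{f} = f$ and $\ave{\partial_t f} = \partial_t f$, yielding the constrained evolution \eqref{Equ12}. To produce the continuity equation \eqref{Equ73} I would test \eqref{Equ12} against functions depending on $x$ alone: because the average preserves $x$-marginals and $\mathcal{L}_f(\fo)$ is a pure velocity divergence, the right hand side $\ave{\mathcal{L}_f(\fo)}$ pairs to zero against such functions, so only the transport term survives. On $\rsphere$ the averaged flux reduces to $\ave{\Divx(fv)} = \Divx(f\omega)$, and the first moment of the von Mises--Fisher density is $\into{\mlo(\omega)\,\omega} = r\lambda(l)\,\Omega = \tfrac{\sigma l}{r}\,\Omega$ by the equilibrium relation $\lambda(l) = \tfrac{\sigma}{r^2}l$. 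Integrating in $\omega$ then delivers exactly \eqref{Equ73}.

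The orientation equation \eqref{Equ74} is the genuinely hard part, and here I would follow the generalized collision invariant (GCI) philosophy of \cite{DM08,DFL13}, adapted to the measure setting. The obstruction is that $\omega$ is not an invariant of $\ave{Q}$, so testing \eqref{Equ12} directly against $\omega$ would leave the unknown $\fo$ in the equation. Instead I would seek, for each vector $\vec{A}\perp\Omega$, a generalized invariant $\psi_{\vec A}$ whose pairing against $\ave{\mathcal{L}_f(\fo)}$ vanishes for every admissible $\fo$. By the axial symmetry about $\Omega$, the natural Ansatz expresses this vector-valued invariant through a single scalar profile $\chi(c)$ with $c = \omega\cdot\Omega/r$, and imposing the GCI condition transfers, after integration by parts against the von Mises weight $e^{lc}$ on the sphere, into the elliptic ODE for $\chi$ stated in the theorem; the two cases $d = 2$ and $d\geq 3$ differ by the extra zeroth order term, which is the Laplace--Beltrami eigenvalue $-(d-2)$ of a degree-one harmonic on the azimuthal sphere $\mathbb{S}^{d-2}$. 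Once $\psi_{\vec A}$ is in hand, testing \eqref{Equ12} against it kills the collision contribution entirely, and the surviving time and transport terms, expanded through $f = \rho\,\mlo$ with $\partial_t$ and $\nabla_x$ falling on $\rho$ and $\Omega$, reduce after the angular quadratures to \eqref{Equ74}; the convection coefficient $k_d$ emerges precisely as the stated ratio of $\chi$-weighted integrals, while the term $\tfrac{r}{l}(I_d - \Omega\otimes\Omega)\tfrac{\nabla_x\rho}{\rho}$ comes from differentiating the density inside the flux and projecting orthogonally to $\Omega$.

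The step I expect to be the main obstacle is the rigorous construction and use of the GCI in this functional framework. One must solve the singular Sturm--Liouville problems for $\chi$, whose weight $(1-c^2)^{(d-1)/2}$ degenerates at the poles $c = \pm 1$, and verify both the solvability of the inhomogeneous equation compatibly with the degenerate boundary behaviour and the admissibility of the resulting $\psi_{\vec A}$ as a test function against the measure $f$ and against the divergence-form right hand side, so that the integrations by parts on the sphere are legitimate. Checking that the GCI pairing truly annihilates $\ave{\mathcal{L}_f(\fo)}$ for all $\fo$ compatible with \eqref{Equ9}, rather than merely formally, is exactly the point at which the measure-theoretic setting makes the argument more delicate than in the smooth density framework of \cite{DM08,DFL10,DFL13}.
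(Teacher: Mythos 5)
Your proposal follows essentially the same route as the paper: Hilbert expansion with the constraints \eqref{Equ8}--\eqref{Equ9}, identification of the limit as a von Mises--Fisher equilibrium via Propositions \ref{AveEqui} and \ref{IsotropicEquilibrium}, elimination of $\ftw$ by averaging (Proposition \ref{Elimination}), the continuity equation from the trivial invariant, and the orientation equation from generalized collision invariants of the form $\chi \left ( \Omega \cdot \frac{\omega}{r}\right )$ times an azimuthal factor, with $\chi$ solving the stated singular Sturm--Liouville problems, exactly as in Section \ref{ColInv} and the proof in Section \ref{LimMod}. In particular, the obstacle you flag at the end --- that the pairing must annihilate the linearized operator for measures $\fo$ that merely satisfy \eqref{Equ9} rather than being supported on $\rsphere$ --- is precisely what the paper resolves in Proposition \ref{ZeroAvePart}, using Lemma \ref{FOneF} to express $\fo - \ave{\fo}$ through the adjoint problem of Lemma \ref{AdjointProblem}.
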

A nice practical implication of our main result is that this
penalization procedure, by imposing asymptotically a cruise speed
for particles, could lead to efficient and stable numerical
schemes to compute the hydrodynamic equations
\eqref{Equ73}-\eqref{Equ74}. This is important due to the possible
non-hyperbolicity of the system \eqref{Equ73}-\eqref{Equ74}, see
\cite{DFL13}. The local in time well-posedness of the SOH system
\eqref{Equ73}-\eqref{Equ74} was studied in \cite{DLMP}. We finally
emphasize that the constants appearing in the equations
\eqref{Equ73}-\eqref{Equ74} coincide exactly with the ones
obtained in \cite{DFL13} after some easy but tedious algebraic
manipulations.

Our article is organized as follows. In Section \ref{AveColOpe} we
study the equilibria of the average collision operator in our
functional setting. This analysis can be carried out by
introducing some Bessel functions. In the next section we
investigate the notion of collision invariant suitable in our
functional setting. We determine the structure of these invariants
and present their symmetries. Section \ref{LimMod} is devoted to
the derivation of the fluid model for the macroscopic quantities,
parameterizing the limit von Mises-Fisher equilibrium. The proofs
of some technical results can be found in the Appendix.


\section{The equilibria of the average collision operator}
\label{AveColOpe}

We consider the collision operator $Q(F) = \Divv\{F(v - u[F]) +
\sigma \nabla _v F\}$ where $u[F] = \intv{v F}/\intv{F}$ is the
mean velocity. The above operator should be understood in the
duality sense between non negative bounded measures on $\R^d$ and
smooth functions, compactly supported in $\R^d$
\[
\intv{\psi (v) Q(F)} = \intv{[- (v - u[F]) \cdot \nabla _v \psi
(v) + \sigma \Delta _v \psi (v)] F }
\]
for any $F \in {\cal M}_b ^+ (\R^d)$ and $\psi \in C^2 _c (\R^d)$
such that $\intv{|v|F} < +\infty$. As suggested by the formal
expansion \eqref{FormalExpansion}, we focus on measures satisfying
(see \eqref{Equ8}-\eqref{Equ9})
\[
\Divv\{F \abv\} = 0,\;\;Q(F) = \Divv\{\Fo \abv  \}.
\]
Thanks to Propositions \ref{Elimination} and
\ref{FirstConstraint}, we deduce that $\supp F \subset \A$ and
\[
\ave{Q(F)} = \ave{\Divv\{\Fo \abv  \}} = 0.
\]
We discuss the case of non negative bounded measures supported on
the sphere $\rsphere$, that is, we discard all difficulties
related to the mass of the points at rest. For such measures, the
equality $\ave{Q(F)} = 0$ can be interpreted in the following
sense (see Proposition \ref{VarChar})
\[
\int _{v \neq 0} \left \{ - (v - u[F]) \cdot \nabla _v \left [ \tpsi \left ( r \vsv \right ) \right ] + \sigma \Delta _v \left [ \tpsi \left ( r \vsv \right ) \right ]\right \} = 0,\;\forall \tpsi \in C^2  (\rsphere).
\]
The complete description of the above equilibria of the average
collision operator $Q$, called the von Mises-Fisher distributions,
is given by Proposition \ref{AveEqui}, whose proof is detailed
below. We start with the following easy integration by parts
formula on spheres. The proof is postponed to \ref{A}.
\begin{lemma}
\label{IntByPartsSphere}
Assume that $A = A(v)$ is a $C^1$ vector field in ${\mathcal O} = \{v \in \R^d\;:\; r_1 < |v| < r_2 \}$. Then for any $t \in ]r_1, r_2[$ we have
\begin{equation}
\label{Ident0}
\int_{|\omega | = t} (\Divv A)(\omega) \;\dom = \int_{|\omega | = t}\left \{\frac{\omega \otimes \omega}{t^2} : \partial _v A (\omega) + \frac{(d-1)\omega}{t^2} \cdot A(\omega) \right \} \;\dom.
\end{equation}
In particular, if $A(v) \cdot v = 0, v \in {\mathcal O}$, then
\begin{equation}
\label{Ident1} \int_{|\omega | = t} (\Divv A) (\omega) \;\dom = 0,\;\;t \in ]r_1, r_2[
\end{equation}
and for any function $\chi \in C^1 ({\mathcal O})$ we have
\begin{equation}
\label{Ident2}
\int_{|\omega | = t} \nabla _v \chi (\omega) \cdot A(\omega) \;\dom + \int_{|\omega | = t} \chi (\omega) ( \Divv A) (\omega) \;\dom = 0,\;\;t \in ]r_1, r_2[.
\end{equation}
\end{lemma}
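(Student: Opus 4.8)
I will deduce all three identities from the master formula \eqref{Ident0}, so the main task is to prove \eqref{Ident0}; once it is in hand, \eqref{Ident1} and \eqref{Ident2} follow quickly. The plan is a ``differentiation in the radius'' argument applied to the divergence theorem. Since $A$ is only of class $C^1$ on the open annulus $\calo = \{r_1 < |v| < r_2\}$, I cannot integrate over a full ball; instead I fix an auxiliary radius $a \in (r_1, r_2)$ and apply the divergence theorem on the shell $\{a < |v| < t\}$ for $t \in (a, r_2)$. With outward unit normal $\omega/|\omega|$ this reads
\[
\int_{a < |v| < t} (\Divv A)(v)\,\dv = \int_{|\omega| = t} A(\omega)\cdot \frac{\omega}{t}\,\dom - \int_{|\omega| = a} A(\omega)\cdot \frac{\omega}{a}\,\dom.
\]
I will then differentiate both sides in $t$; the inner-sphere term is independent of $t$ and drops out.

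On the left, polar coordinates (equivalently the coarea formula) give $\int_{a<|v|<t}(\Divv A)\,\dv = \int_a^t \big( \int_{|\omega|=s}(\Divv A)\,\dom\big)\,\md s$, so by the fundamental theorem of calculus --- using that $\Divv A$ is continuous --- its $t$-derivative is exactly $\int_{|\omega|=t}(\Divv A)(\omega)\,\dom$, the left-hand side of \eqref{Ident0}. On the right, I rescale onto the unit sphere by writing $\omega = t\xi$, $|\xi| = 1$, $\dom = t^{d-1}\,\md\xi$, turning the outer flux into $\int_{|\xi|=1} A(t\xi)\cdot \xi\; t^{d-1}\,\md\xi$. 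Differentiating under the integral sign (legitimate since $A \in C^1$ and the unit sphere is compact) and using $\frac{\md}{\md t}A(t\xi) = \partial_v A(t\xi)\,\xi$ produces two terms,
\[
\int_{|\xi|=1}\Big[ \big(\xi\otimes\xi : \partial_v A(t\xi)\big)\, t^{d-1} + (d-1)\,t^{d-2}\,A(t\xi)\cdot\xi \Big]\md\xi,
\]
which, on undoing the rescaling $\xi = \omega/t$, is precisely the right-hand side of \eqref{Ident0}. This establishes the master identity.

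Then \eqref{Ident1} is immediate from \eqref{Ident0} when $A\cdot v = 0$ on $\calo$: the second term vanishes because $\frac{(d-1)\omega}{t^2}\cdot A(\omega) = \frac{d-1}{t^2}\,\omega\cdot A(\omega) = 0$ on $|\omega| = t$, while for the first I differentiate the constraint $\sum_i v_i A_i(v) \equiv 0$ in $v_j$ to get $\sum_i v_i\,\partial_{v_j}A_i = -A_j$; contracting with $\omega$ then gives $\frac{\omega\otimes\omega}{t^2}:\partial_v A(\omega) = -\frac{1}{t^2}\,\omega\cdot A(\omega) = 0$, so the whole integrand vanishes pointwise. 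Finally \eqref{Ident2} follows by applying \eqref{Ident1} to the field $\chi A$, which is again $C^1$ and tangent ($\chi A\cdot v = \chi\,(A\cdot v) = 0$), combined with the product rule $\Divv(\chi A) = \nabla_v\chi\cdot A + \chi\,\Divv A$. The only genuine subtlety is that $A$ is defined on an annulus rather than a ball, which forces the shell version of the divergence theorem and the fixed auxiliary radius $a$; beyond that the argument is routine differentiation together with bookkeeping of the sphere-measure rescaling.
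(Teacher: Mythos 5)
Your proof is correct and follows essentially the paper's own route: both arguments identify $\int_{|\omega|=t}(\Divv A)(\omega)\,\dom$ with the $t$-derivative of the outer flux $t^{d-1}\int_{|\xi|=1}A(t\xi)\cdot\xi\,\md\xi$, differentiate under the integral sign to obtain \eqref{Ident0}, and then handle \eqref{Ident1} by differentiating the constraint $v\cdot A(v)=0$ (giving $\frac{\omega\otimes\omega}{t^2}:\partial_v A(\omega)=-\frac{1}{t^2}\,\omega\cdot A(\omega)=0$) and \eqref{Ident2} by applying \eqref{Ident1} to the field $\chi A$, exactly as the paper does. The only difference is cosmetic: the paper establishes the radial-derivative identity weakly, integrating $\Divv\{\eta(|v|)A\}$ over $\calo$ for an arbitrary test function $\eta\in C^1_c(]r_1,r_2[)$ and integrating by parts in $t$, whereas you obtain it pointwise from the divergence theorem on the shell $\{a<|v|<t\}$ together with the coarea formula and the fundamental theorem of calculus.
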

\vskip6pt
It is very convenient to express the differential operators
$\nabla _\omega, \Divo$ of functions and vector fields on the
sphere $\rsphere$ in terms of the differential operators $\nabla
_v, \Divv$ applied to extensions of functions and vector fields on
a neighborhood of $\rsphere$ in $\R^d$. The notation
$\widetilde{\cdot}$ stands for the restriction on the sphere
$\rsphere$ and $\tilde{\cdot}^t$ for the restriction on the sphere
$t\sphere$. The proof of the following lemma is detailed in
\ref{B}. 
\begin{lemma}
\label{Extension}$\;\\$
1. Let $\psi = \psi (v)$ be a $C^1$ function in a open set of $\R^d$, containing $\rsphere$. Then, for any $\omega \in \rsphere$ we have
\begin{equation*}
\label{Extens1} \nabla _\omega \tpsi (\omega) = \imoo \widetilde{\nabla _v \psi } (\omega).
\end{equation*}
2. Let $\tpsi = \tpsi (\omega)$ be a $C^1$ function on $\rsphere$ and $\psi : \calo = \{ v \in \R^d : r_1 < |v| < r_2 \} \to \R$ be the function defined by $\psi (v) = \tpsi \left ( r \vsv\right ), v \in \calo$, with $0 < r_1 < r < r_2 < +\infty$. Then, for any $t \in ]r_1, r_2 [$, we have
\begin{equation*}
\label{EquTangGrad} (\nabla _v \psi )(\omega _t) = ( \nabla
_{\omega _t} \widetilde\psi^t )(\omega _t) = \frac{r}{t} ( \nabla
_\omega \tpsi ) \left ( r \frac{\omega _t}{t} \right ),\;\;|\omega
_t | = t.
\end{equation*}
3. Let $\txi = \txi(\omega)$ be a $C^1$ tangent vector field on
$\rsphere$ and $\xi = \xi (v)$ a $C^1$ extension of $\txi$ in the
set $\calo = \{v \in \R^d : r_1 < |v|< r_2 \}$ such that $\xi (v)
\cdot v = 0$ for any $v \in \calo$. Then we have
\begin{equation*}
\label{Extens2}
(\Divo \txi )(\omega)= (\widetilde{\Divv \xi })(\omega),\;\;\omega \in \rsphere.
\end{equation*}
4. Let $\txi = \txi (\omega)$ a $C^1$ tangent vector field on
$\rsphere$ and $\xi (v) = \txi\left ( r \vsv \right ), v \in
\R^d\setminus \{0\}$, then
\begin{equation}\label{Complement}
(\mathrm{div}_{\omega _t } \xi )(\omega _t) = \frac{r}{t} (\Divo
\txi ) \left(\frac{r}{t} \omega _t \right),\;\;|\omega _t | = t.
\end{equation}
\end{lemma}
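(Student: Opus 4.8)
The four identities rest on a single principle: on a sphere the intrinsic operators $\nabla_\omega$ and $\Divo$ are recovered from the ambient operators $\nabla_v$, $\Divv$ by retaining only tangential data. The plan is to establish Parts~1 and~3 at a fixed radius, and then deduce Parts~2 and~4 by combining these with the homogeneity of the particular extensions used there.

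For Part~1 I would use the defining property of the surface gradient: along any $C^1$ curve $\gamma$ drawn on $\rsphere$ with $\gamma(0)=\omega$, one has $\frac{\md}{\md s}\tpsi(\gamma(s))|_{s=0}=\nabla_\omega\tpsi(\omega)\cdot\gamma'(0)$, while the chain rule gives $\frac{\md}{\md s}\psi(\gamma(s))|_{s=0}=\widetilde{\nabla_v\psi}(\omega)\cdot\gamma'(0)$ since $\psi$ extends $\tpsi$. As $\gamma'(0)$ ranges over the full tangent space $\{\tau:\tau\cdot\omega=0\}$, the two vectors share the same tangential part, namely $\imoo\widetilde{\nabla_v\psi}(\omega)$; since $\nabla_\omega\tpsi(\omega)$ is itself tangent it equals this projection, which is the claim. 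The curve argument is radius-agnostic, so it yields verbatim the analogue on every sphere $t\sphere$, a fact I will reuse.

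For Part~3 I would pass to a local orthonormal frame $e_1,\dots,e_{d-1},n$ at $\omega$ with $n=\omega/r$, and invoke the standard identification of the intrinsic divergence of a tangent field with the tangential trace of the ambient derivative, $\Divo\txi(\omega)=\sum_{i=1}^{d-1}e_i\cdot(\partial_v\xi)(\omega)e_i$ (the second-fundamental-form corrections are normal and drop out on contraction with the tangent $e_i$). It then remains to see that the missing normal--normal term $n\cdot(\partial_v\xi)(\omega)n$ vanishes, and this is exactly where the hypothesis $\xi(v)\cdot v=0$ throughout $\calo$ is used: differentiating this identity in the direction $n$ gives $(\partial_v\xi\,n)\cdot v+\xi\cdot n=0$ at $\omega$, and since $\xi(\omega)\cdot\omega=0$ forces $\xi\cdot n=0$, we obtain $n\cdot(\partial_v\xi)n=0$. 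Hence $\widetilde{\Divv\xi}(\omega)=\sum_i e_i\cdot(\partial_v\xi)e_i=\Divo\txi(\omega)$. I expect this normal-component cancellation to be the main obstacle, since it is the only place where the true tangency of the extension, rather than mere agreement on the sphere, is exploited.

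Parts~2 and~4 then follow by coupling the above with homogeneity. The extensions there, $\psi(v)=\tpsi(r\vsv)$ and $\xi(v)=\txi(r\vsv)$, are homogeneous of degree zero, so $\nabla_v\psi$ and $\partial_v\xi$ are homogeneous of degree $-1$; in particular $\nabla_v\psi(v)\cdot v=0$ by Euler's relation, so $\nabla_v\psi$ is automatically tangent on each sphere. For the first equality in Part~2 I would invoke the radius-$t$ version of Part~1: the surface gradient on $t\sphere$ is the tangential projection of $\nabla_v\psi$, which here acts as the identity, giving $(\nabla_v\psi)(\omega_t)=(\nabla_{\omega_t}\widetilde\psi^t)(\omega_t)$. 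For the second equality, degree $-1$ homogeneity yields $(\nabla_v\psi)(\omega_t)=\frac{r}{t}(\nabla_v\psi)(\frac{r}{t}\omega_t)$, and evaluating at $\frac{r}{t}\omega_t\in\rsphere$ through Part~1 produces the factor $\frac{r}{t}(\nabla_\omega\tpsi)(r\frac{\omega_t}{t})$. Part~4 is the divergence analogue: applying the frame formula of Part~3 on $t\sphere$ and using the degree $-1$ homogeneity of $\partial_v\xi$ to transport the evaluation back to $\rsphere$ gives $(\mathrm{div}_{\omega_t}\xi)(\omega_t)=\frac{r}{t}\,\Divo\txi(\frac{r}{t}\omega_t)$, as required.
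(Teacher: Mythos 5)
Your proposal is correct, but it diverges from the paper in an interesting way on Parts 3 and 4. Parts 1 and 2 essentially coincide with the paper's argument: the paper proves Part 1 by the same curve computation (concluding via $\nabla_\omega\tpsi - \imoo \widetilde{\nabla_v\psi} \in T_\omega(\rsphere)\cap (T_\omega(\rsphere))^\bot = \{0\}$), and proves Part 2 by running the curve argument on $t\sphere$ and then observing, as you do via Euler's relation, that $\psi$ has purely tangential gradient; your homogeneity bookkeeping is just a repackaging of that chain-rule step. For Part 3, however, the paper argues \emph{weakly}: it tests $\widetilde{\Divv\xi}$ against an arbitrary $C^1$ function $\tpsi$ on $\rsphere$, invokes its integration-by-parts identity on spheres (Lemma \ref{IntByPartsSphere}, itself proved by a radial test-function trick) together with Part 1 to get $\int_{|\omega|=r}\tpsi\,\Divo\txi\,\dom = \int_{|\omega|=r}\tpsi\,\widetilde{\Divv\xi}\,\dom$ for all $\tpsi$, and deduces the pointwise identity by density; your proof is instead pointwise, resting on the standard frame formula $\Divo\txi(\omega)=\sum_{i=1}^{d-1}e_i\cdot(\partial_v\xi)(\omega)e_i$ and killing the normal--normal entry $n\cdot(\partial_v\xi)n$ by differentiating $\xi(v)\cdot v=0$ radially — a computation which is correct and which isolates precisely where the hypothesis of tangency throughout $\calo$ (not merely on $\rsphere$) enters, something the duality proof uses only implicitly through Lemma \ref{IntByPartsSphere}. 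Similarly for Part 4 the paper again argues by duality with a change of variables between $t\sphere$ and $\rsphere$, whereas you combine the degree $-1$ homogeneity of $\partial_v\xi$ with Part 3 applied at both radii (legitimate, since $\xi(v)=\txi(r\vsv)$ satisfies $\xi\cdot v=0$ on all of $\R^d\setminus\{0\}$, so the radius-$t$ version of Part 3 applies). The trade-off: your route is shorter and entirely pointwise, avoiding Lemma \ref{IntByPartsSphere} altogether, but it outsources to ``standard'' differential geometry exactly the identification of the intrinsic divergence with the tangential trace of the ambient derivative — the one piece the paper's self-contained, integration-by-parts framework verifies by hand; if your notes take that frame formula as known, your argument is complete as written.
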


\vskip 6pt

Before giving the proof of Proposition \ref{AveEqui}, we indicate
a formula which will be used several times in our computations.
For any continuous function $G : [-r, r] \to \R$, $d \geq 2,
\Omega \in \sphere$, we have
\[
\into{G(\omega \cdot \Omega) } = \intth{G(r\cos \theta) \sin
^{d-2} \theta}\; r^{d-1} \bar\omega _{d-1}
\]
with $\omega _1 = 2$. In particular, for any continuous function $g : [-r, r] \to \R$, we have
\begin{align}
\label{ForMag}
\into{g(\omega \cdot \Omega) \mlo (\omega)} & = \frac{\into{g(\omega \cdot \Omega)\exp \left ( l \Omega \cdot \frac{\omega}{r}\right)}}{\into{\exp \left ( l \Omega \cdot \frac{\omega }{r} \right )}} \nonumber \\
& = \frac{\intth{g(r \cos \theta) e ^{l \cos \theta}\sin ^{d-2} \theta }}{\intth{e ^{l \cos \theta} \sin ^{d-2} \theta }}.
\end{align}
\begin{proof} (of Proposition \ref{AveEqui})\\
1.$\implies 2.$ We assume that $F$ is a equilibrium for the
average collision kernel. We claim that $\intv{\varphi (v) F} = 0$
for any continuous function $\varphi$ satisfying $\into{\varphi
(\omega) M(\omega)} = 0$, with $M(v) = \exp \left ( -\frac{|v -
u[F]\;|^2}{2\sigma} \right ), v \in \R^d$. The idea is to solve
the problem
\begin{equation}
\label{Equ14}
- \Divo (M(\omega) \nabla _\omega \tpsi ) = M(\omega) \tvarphi (\omega),\;\;\omega \in \rsphere
\end{equation}
where $\tvarphi$ is the restriction on $\rsphere$ of $\varphi$ as
usual. Notice that we have
\[
\into{\tvarphi (\omega) M(\omega) } = \into{\varphi (\omega) M(\omega) } = 0.
\]
We introduce the Hilbert spaces
\[
L^2 (\rsphere) = \{ \chi : \rsphere \to
\R,\;\;\into{\chi^2(\omega) M(\omega)} <+\infty\}
\]
\[
H^1 (\rsphere) = \{ \chi : \rsphere \to \R,\;\;\into{\{\chi^2 +
|\nabla _\omega \chi |^2 \}(\omega)M(\omega)} <+\infty\}
\]
endowed with the scalar products
\[
(\chi, \theta)_r = \into{\chi (\omega) \theta (\omega) M(\omega)},\;\;\chi, \theta \in L^2 (\rsphere)
\]
\[
((\chi, \theta))_r = \into{[\chi (\omega) \theta (\omega) + \nabla _\omega \chi \cdot \nabla _\omega \theta] \;M(\omega)},\;\;\chi, \theta \in H^1 (\rsphere).
\]
We denote by $|\cdot|_r, \|\cdot \|_r$ the norm induced by the above scalar products. There is a constant $C_r$ such that the following Poincar\'e inequality holds true
\[
|\chi |_r ^2 = \into{(\chi (\omega))^2 M(\omega)} \leq C_r \into{|\nabla _\omega \chi |^2 M(\omega)}= C_r |\nabla _\omega \chi |^2 _r
\]
for any $\chi \in H^1(\rsphere)$ satisfying $\into{\chi (\omega)
M(\omega) } = 0$. The previous inequality guarantees that the
application $\chi \to |\nabla _\omega \chi |_r$ is a norm
equivalent to $\|\cdot \|_r$ on
\[
\tilde{H}^1 (\rsphere) : = H^1 (\rsphere) \cap \{ \theta \in L^2 (\rsphere)\;:\; \into{\theta (\omega) M(\omega) } = 0\}.
\]
Therefore, the bilinear form
\[
(\chi, \theta) \in \tilde{H}^1 (\rsphere) \times \tilde{H}^1 (\rsphere) \to \into{\nabla _\omega \chi \cdot \nabla _\omega \theta \;M(\omega) }
\]
is symmetric, bounded and coercive. By the Lax-Milgram lemma,
there is a unique solution $\tpsi \in \tilde{H}^1 (\rsphere)$ for
the variational problem \eqref{Equ14} leading to
\begin{equation}
\label{Equ15}
\into{\nabla _\omega \tpsi \cdot \nabla _\omega \chi  \;M(\omega) } = \into{\tvarphi (\omega) \chi (\omega) M(\omega) }
\end{equation}
for any $\chi \in \tilde{H}^1 (\rsphere)$. Observe that
\eqref{Equ15} still holds true for any constant function on
$\rsphere$, thanks to the compatibility condition $\into{\tvarphi
(\omega) M(\omega)} = 0$. Therefore the variational formulation is
valid for any function $\chi \in H^1(\rsphere)$, implying that
\[
- \Divo (M(\omega) \nabla _\omega \tpsi ) = M(\omega) \tvarphi (\omega),\;\;\omega \in  \rsphere.
\]
We consider the extension of $\tpsi$ defined as usual as
\[
\psi (v) = \tpsi \left ( r \vsv \right ),\;\;v \in \R^d \setminus \{0\}.
\]
By Lemma \ref{Extension}, statements $2$ and $3$, we check that for any $v \in \rsphere$ we have
\begin{align*}
M(v) \left \{\frac{v - u[F]}{\sigma} \cdot \nabla _v \left [ \tpsi
\left ( r \vsv \right )\right ] - \Delta _v  \left [ \tpsi \left (
r \vsv \right )\right ] \right \} &= - \Divo (M \nabla _\omega
\tpsi ) \\
&= M(v) \tvarphi (v)
\end{align*}
and therefore we obtain
\[
\intv{\varphi (v) F} = \intv{ \left \{\frac{v - u[F]}{\sigma} \cdot \nabla _v \left [ \tpsi \left ( r \vsv \right )\right ] - \Delta _v  \left [ \tpsi \left ( r \vsv \right )\right ] \right \} F } = 0.
\]
We deduce that the linear forms $\varphi \to \into{\varphi
(\omega) M(\omega) }$ and $\varphi \to \intv{\varphi (v) F}$ are
proportional, see Lemma III.2 in \cite{bre}, and thus there is
$\tilde{C}$ such that for any $\varphi \in C(\R^d)$, we have
\begin{align*}
\intv{\varphi (v) F} & = \tilde{C} \into{\varphi (\omega) M(\omega) } = \rho
\frac{ \into{\varphi (\omega) \exp \left (\frac{\omega \cdot u[F]}{\sigma}   \right )}}{\into{\exp \left (\frac{\omega \cdot u[F]}{\sigma}   \right )}}
\end{align*}
with $\rho = \tilde{C}\into{M(\omega)}$.
Therefore the measure $F$ has a positive density with respect to $\dom$ on $\rsphere$
\[
F = \rho \frac{\exp \left (\frac{\omega \cdot u[F]}{\sigma}   \right )\;\dom
}{\intopr{\exp \left (\frac{\omega ^\prime \cdot u[F]}{\sigma}   \right )}}.
\]
If $\rho = 0$, we obtain $F = 0$, and we can take $l = 0$ and any $\Omega \in \sphere$. Assume now that $\rho >0$. If $u[F] = 0$, we obtain $F = \rho \frac{\dom}{\omega _d r ^{d-1}}$ which corresponds to $l = 0$ and any $\Omega \in \sphere$. If $u[F] \neq 0$, we introduce $\Omega [F] = \frac{u[F]}{|u[F]|}$. By the definition of $u[F]$, we have
\begin{equation}
\label{Equ20}
u[F] = \displaystyle\frac{
\into{\exp\left ( \frac{\omega \cdot u[F]}{\sigma}  \right )\omega}
}{\into{\exp\left ( \frac{\omega \cdot u[F]}{\sigma}  \right )}} =
\frac{
\intth{r \cos \theta \exp\left ( \frac{r |u[F]|}{\sigma} \cos \theta \right )\sin ^{d-2} \theta}}{\intth{\exp\left ( \frac{r |u[F]|}{\sigma} \cos \theta \right )\sin ^{d-2} \theta}}\Omega[F].
\end{equation}
For the last equality use the fact that
\[
\into{\exp\left ( \frac{\omega \cdot u[F]}{\sigma} \right)\omega } = \into{\exp\left ( \frac{\omega \cdot u[F]}{\sigma} \right)(\omega \cdot \Omega)}\;\Omega
\]
and formula \eqref{ForMag}. The equality \eqref{Equ20} reduces to the condition
\[
\frac{|u[F]|}{r} = \frac{
\intth{\cos \theta \exp\left ( \frac{r |u[F]|}{\sigma} \cos \theta \right )\sin ^{d-2} \theta}}{\intth{\exp\left ( \frac{r |u[F]|}{\sigma} \cos \theta \right )\sin ^{d-2} \theta}}.
\]
We introduce the function $\lambda : \R_+ \to \R$
\[
\lambda (l) = \frac{
\intth{ \cos \theta e^{l \cos \theta }\sin ^{d-2} \theta}}{\intth{e^{  l \cos \theta }\sin ^{d-2} \theta}},\;\;l \in \R_+.
\]
Therefore the non negative number $l=\frac{r |u[F]|}{\sigma}$
satisfies $\lambda (l) = \frac{\sigma}{r ^2} l$, and thus the
measure $F$ is given by
\[
F = \rho \frac{
\exp\left ( \frac{r |u[F]|}{\sigma} \frac{\omega}{r} \cdot \Omega \right )\dom}{\intopr{\exp\left ( \frac{r |u[F]|}{\sigma} \frac{\omega ^{\;\prime}}{r} \cdot \Omega\right )}} = \rho \mlo \;\dom
\]
with $\rho \in \R_+$, $\Omega = \frac{u[F]}{|u[F]|} \in \sphere$, $l \in \R_+$ satisfying $\lambda (l) = \frac{\sigma }{r^2} l$.\\
$2. \implies 1.$ Conversely, let $F$ be a measure given by $F = \rho \mlo \dom$ for some $\rho \in \R_+, \Omega \in \sphere, l \in \R_+$ such that $\lambda (l) = \frac{\sigma}{r^2}l$. If $\rho = 0$, $F$ is the trivial equilibrium (with $u[F] = 0$). If $\rho >0$, the mean velocity writes
\begin{align*}
u[F] & = \frac{\intv{v F }}{\intv{F}} = \frac{\into{(\omega \cdot \Omega)\exp \left (l \frac{\omega}{r} \cdot \Omega \right)}}{\into{\exp \left (l \frac{\omega}{r} \cdot \Omega   \right)}}\Omega \\
& = \frac{r\intth{\cos \theta e ^{l \cos \theta}\sin ^{d-2}\theta}}{\intth{e ^{l\cos \theta} \sin ^{d-2}\theta}}\Omega = r \lambda (l) \Omega = \frac{\sigma}{r} l \Omega
\end{align*}
saying that $\frac{u[F]}{|u[F]|} = \Omega$ and $|u[F]| = \frac{\sigma l}{r}$.
For any test function $\tpsi \in C^2  (\rsphere)$ we have
\begin{align*}
M(v) \left [(v-u[F]) \cdot \nabla _v \left [\tpsi\left ( r \vsv\right)   \right]  - \sigma \Delta _v \left [\tpsi\left ( r \vsv\right)   \right]\; \right] = - \sigma \Divo ( M\nabla _\omega \tpsi),\;\;v \in \rsphere
\end{align*}
where $M(v) = \exp \left ( - \frac{|v - u[F] |^2}{2\sigma}\right), v \in \R^d$. Notice that for any $v \in \rsphere$ we have
\begin{align*}
M(v) & = \exp \left ( - \frac{r^2 + \frac{\sigma ^2 l^2}{r^2}}{2\sigma}\right) \into{\exp \left ( l\Omega \cdot \frac{\omega}{r} \right ) } \; \mlo (\omega)
\end{align*}
and thus, the above equality becomes
\[
\mlo(v) \left \{ (v-u[F]) \cdot \nabla _v \left [\tpsi\left ( r \vsv\right)   \right]  - \sigma \Delta _v \left [\tpsi\left ( r \vsv\right)   \right] \right\} = - \sigma \Divo ( \mlo \nabla _\omega \tpsi).
\]
Therefore we obtain
\begin{align*}
\int_{v\neq 0} & \left \{ (v-u[F]) \cdot \nabla _v \left [\tpsi\left ( r \vsv\right)   \right]  - \sigma \Delta _v \left [\tpsi\left ( r \vsv\right)   \right] \right\}F\;\dv  \\
& = \int_{|v| = r} \left \{ (v-u[F]) \cdot \nabla _v \left [\tpsi\left ( r \vsv\right)   \right]  - \sigma \Delta _v \left [\tpsi\left ( r \vsv\right)   \right] \right\} \rho \mlo (v) \;\dv \\
& = - \rho \sigma \into{\Divo (\mlo(\omega) \nabla _\omega \tpsi )} = 0.
\end{align*}
\end{proof}
The properties of the function $\lambda$ are summarized in Proposition \ref{IsotropicEquilibrium}, whose proof is detalied below.
\begin{proof} (of Proposition \ref{IsotropicEquilibrium})\\
We introduce the function
\[
\beta _0 (l) = \frac{1}{\pi} \intth{e^{l \cos \theta} \sin ^{d-2} \theta},\;\;l \in \R.
\]
It is a Bessel like function \cite{AbraSteg}. Indeed, it verifies the linear second order differential equation
\begin{equation}
\label{EquBessel} l^2 \beta _0 ^{\prime \prime} (l) + (d-1) l \beta _0 ^\prime (l) = l^2 \beta _0 (l),\;\;l \in \R.
\end{equation}
We recall that the standard modified Bessel function $I_n (l) = \frac{1}{\pi} \intth{e ^{l\cos \theta} \cos (n \theta) }, n \in \N$, satisfy
\[
l^2 I_n ^{\prime \prime} (l) + l I_n ^\prime (l) = (l^2 + n^2 ) I_n (l),\;\;l \in \R.
\]
Clearly $\beta _0 ^\prime (l) = \frac{1}{\pi}\intth{\cos \theta e ^{l \cos \theta} \sin ^{d-2} \theta }$ and thus the function $\lambda $ writes
\[
\lambda (l) = \frac{\beta _0 ^\prime (l)}{\beta _0 (l)}.
\]
It is easily seen that $\beta _0 ^\prime (0) = 0$, implying that $\lambda (0) = 0$. Indeed, we have
\[
\pi \beta _0 ^\prime (0) = \intth{\cos \theta \sin ^{d-2} \theta } = \intth{\frac{\mathrm{d}}{\mathrm{d}\theta} \frac{\sin ^{d-1}\theta }{d-1} } = 0,\;\;d\geq 2.
\]
Moreover, $\lambda$ is strictly increasing. This comes by the formula
\begin{equation}
\label{EquDerLam} \lambda ^\prime (l) = \frac{\beta _0 ^{\prime \prime}(l) \beta _0 (l) - (\beta _0 ^\prime (l))^2}{\beta _0 ^2 (l)}
\end{equation}
and by observing that the Cauchy inequality implies
\begin{align*}
(\beta _0 ^\prime (l))^2& = \left (\frac{1}{\pi}\intth{\cos \theta e ^{l\cos \theta}\sin ^{d-2} \theta }    \right ) ^2 \\
& < \frac{1}{\pi} \intth{ e ^{l \cos \theta} \sin ^{d-2} \theta } \;\frac{1}{\pi} \intth{\cos ^2 \theta e ^{l \cos \theta}\sin ^{d-2} \theta } = \beta _0 (l) \beta _0 ^{\prime \prime} (l).
\end{align*}
The derivative of $\lambda $ at $l =0$ is
\begin{align*}
\lambda ^\prime (0) & =
\frac{
\beta _0 ^{\prime \prime} (0)}{\beta _0 (0)}=
\frac{\intth{\cos ^2 \theta \sin ^{d-2} \theta}}{\intth{\sin ^{d-2} \theta}}
= \frac{\intth{\cos \theta \frac{\mathrm{d}}{\mathrm{d}\theta} \frac{\sin ^{d-1} \theta }{d-1}}}{\intth{\sin ^{d-2} \theta}} \\
& = \frac{\intth{\sin ^d \theta}}{(d-1) \intth{\sin ^{d-2} \theta}}.
\end{align*}
Using $\sin ^2 \theta+\cos ^2 \theta=1$ in the first equality
above, we also have
\[
\lambda ^\prime (0) = 1 - \frac{\intth{\sin ^d
\theta}}{\intth{\sin ^{d-2} \theta }}\,.
\]
We deduce that
\[
\frac{\intth{\sin ^d \theta}}{\intth{\sin ^{d-2} \theta }} = 1 - \lambda ^\prime (0) = (d-1) \lambda ^\prime (0)
\]
which yields $\lambda ^\prime (0) = 1/d$. We claim that $\lambda $
is strictly  concave. Combining \eqref{EquDerLam} and
\eqref{EquBessel}, we obtain for any $l>0$
\begin{equation}
\label{Equ22}
\lambda ^\prime (l) = \frac{\left ( \beta _0 (l) - \frac{d-1}{l} \beta _0 ^\prime (l)\right ) \beta _0 (l)}{\beta _0 ^2 (l)} - \left ( \frac{\beta _0 ^\prime (l)}{\beta _0 (l)}\right ) ^2 = 1 - \frac{d-1}{l} \lambda (l) - \lambda ^2 (l).
\end{equation}
As $\lambda $ is positive and strictly increasing, we deduce that $\lambda $ is strictly concave on $\R_+$. Clearly the function $\lambda$ is bounded on $\R_+$
\[
0 = \lambda (0) < \lambda (l) = \frac{\intth{\cos \theta e ^{l \cos \theta} \sin ^{d-2} \theta}}{\intth{e ^{l \cos \theta} \sin ^{d-2} \theta}}<1
\]
and $\frac{1}{d} = \lambda ^\prime (0) > \lambda ^\prime (l) > 0, l>0$. Let us denote by $\Lambda _0, \Lambda _1$ the limits
\[
\Lambda _0 = \lim _{l \to +\infty} \lambda (l) \in ]0,1],\;\;\Lambda _1 = \lim _{l \to +\infty} \lambda ^\prime (l) \in [0, \frac{1}{d}[.
\]
If $\Lambda _1 >0$ then the inequality $\lambda ^\prime (l) >\Lambda _1, l>0$, implies
\[
\lim _{l \to +\infty} \lambda (l) = \lim _{l \to +\infty} \{ \lambda (l) - \lambda (0)\} \geq \lim _{l \to +\infty} l \Lambda _1  = +\infty
\]
which contradicts the boundedness of $\lambda$. Therefore $\Lambda _1 = 0$ and thus $\lambda ^\prime ([0,+\infty[) = ]0,\lambda ^\prime (0)] = ]0,1/d]$. Passing to the limit, when $l \to +\infty$, in \eqref{Equ22}, yields $\Lambda _0 = \lim _{l \to +\infty} \lambda (l) = 1$.\\
If $\frac{\sigma}{r^2} \geq \frac{1}{d}$, the function $l \to \lambda (l) - \frac{\sigma }{r^2} l$ is strictly decreasing on $\R_+$, and vanishes at $l = 0$
\[
\lambda ^\prime (l) - \frac{\sigma }{r^2} < \lambda ^\prime (0)  - \frac{\sigma }{r^2} = \frac{1}{d} - \frac{\sigma}{r^2} \leq 0,\;\;l>0
\]
implying that the only solution of $\lambda (l) = \frac{\sigma
}{r^2}l $ on $\R_+$ is $l = 0$. If $\frac{\sigma}{r^2}\in ]0,
\frac{1}{d}[$, there is a unique $\tilde{l}>0$ such that $\lambda
^\prime (\tilde{l}) = \frac{\sigma}{r^2}$ and the function $l \to
\lambda ^\prime (l) - \frac{\sigma}{r^2}$ is positive on
$]0,\tilde{l}[$ and negative on $]\tilde{l}, +\infty[$. Therefore
the function $l \to \lambda (l) - \frac{\sigma}{r^2} l$ is
strictly increasing on $[0,\tilde{l}]$, strictly decreasing on
$[\tilde{l}, +\infty[$
\[
\left \{\lambda (l) - \frac{\sigma}{r^2} l\right \} |_{l = 0} = 0,\;\;\lim _{l \to +\infty} \left \{ \lambda (l) - \frac{\sigma}{r^2}l \right \} = - \infty.
\]
We deduce that there is a unique solution $l>0$ such that
$\lambda(l) = \frac{\sigma}{r^2}l$.
\end{proof}
\begin{remark}
The value $l = 0$ corresponds to the isotropic equilibrium
$M_{0\Omega} \;\dom = \frac{\dom}{\bar{\omega}_d \,r^{d-1}}$. The
limit when $l \to +\infty$ leads to the Dirac measure on
$\rsphere$, concentrated at $r \Omega$, that is, for any function
$\tpsi \in C(\rsphere)$ we have
\[
\lim _{l \to +\infty} \into{\tpsi (\omega) \mlo (\omega) } = \tpsi (r \Omega).
\]
\end{remark}
The function $\lambda$ can be computed explicitly, at least for $d
= 3$. Nevertheless, very good explicit approximations are
available in any dimension $d$.
\begin{lemma}
\label{Approximation}$\;$
\begin{enumerate}
\item
Consider the function
\[
\mu : \R_+ \to \R_+, \;\;\mu (l)= \frac{\sqrt{d^2 + 4l^2}-d}{2l} = \frac{2l}{ \sqrt{d^2 + 4l^2} + d},\;\;l \in \R_+.
\]
The function $\mu$ is strictly increasing, strictly concave and we have
\[
\mu (0) = \lambda (0) = 0,\;\;\mu ^\prime (0) = \lambda ^\prime (0) = \frac{1}{d},\;\;\lim _{l \to +\infty} \mu (l) = 1
\]
\[
\mu ^\prime (l) < 1 - \frac{d-1}{l} \mu (l) - \mu ^2 (l),\;\;\mu (l) < \lambda (l), \;\;l>0.
\]
\item
If $d=3$, the function $\lambda$ is given by $\lambda (l) = \frac{\cosh(l)}{\sinh(l)} - \frac{1}{l}, l >0$.
\end{enumerate}
\end{lemma}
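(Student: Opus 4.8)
The plan for the first part is to avoid differentiating the square root directly and instead exploit the algebraic relation that $\mu$ satisfies. Writing $s=\sqrt{d^2+4l^2}$ so that $2l\mu=s-d$, squaring the identity $2l\mu+d=s$ gives $4l^2\mu^2+4ld\mu=4l^2$, that is
\[
\mu^2+\frac{d}{l}\,\mu=1,\qquad l>0.
\]
From here everything is elementary. The boundary values $\mu(0)=0$ and $\lim_{l\to+\infty}\mu(l)=1$ read off most easily from the second expression $\mu=2l/(s+d)$, and implicit differentiation of $l\mu^2+d\mu=l$ yields $\mu'(2l\mu+d)=1-\mu^2$, hence
\[
\mu'(l)=\frac{1-\mu^2}{2l\mu+d}>0,
\]
so $\mu$ is strictly increasing with $0<\mu<1$ on $(0,\infty)$, and letting $l\to0$ in this formula gives $\mu'(0)=1/d=\lambda'(0)$. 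For strict concavity I would not compute $\mu''$ explicitly: in $\mu'=(1-\mu^2)/(2l\mu+d)$ the numerator $1-\mu^2$ is strictly decreasing and the denominator $2l\mu+d$ strictly increasing on $(0,\infty)$ (both since $\mu,\mu'>0$), so $\mu'$ is strictly decreasing, i.e.\ $\mu$ is strictly concave.

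Next I would record the subsolution inequality, which is the bridge to $\lambda$. Using $1-\mu^2=\frac{d}{l}\mu$ from the algebraic relation, the right-hand side of the Riccati equation, evaluated at $\mu$, collapses:
\[
1-\frac{d-1}{l}\,\mu-\mu^2=(1-\mu^2)-\frac{d-1}{l}\,\mu=\frac{d}{l}\,\mu-\frac{d-1}{l}\,\mu=\frac{\mu}{l}.
\]
A short computation using $l\mu^2=l-d\mu$ then gives the defect
\[
R(l):=\frac{\mu}{l}-\mu'(l)=\frac{2\mu^2}{2l\mu+d}>0,\qquad l>0,
\]
which is exactly the claimed strict inequality $\mu'<1-\frac{d-1}{l}\mu-\mu^2$.

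The main obstacle is the final inequality $\mu<\lambda$. By \eqref{Equ22}, $\lambda$ satisfies the \emph{same} Riccati equation exactly, $\lambda'=1-\frac{d-1}{l}\lambda-\lambda^2$, while $\mu$ is only a strict subsolution; worse, both share the initial data $\mu(0)=\lambda(0)=0$ and the first derivative $\mu'(0)=\lambda'(0)=1/d$ (indeed both are odd in $l$, hence agree to second order at $0$), so a naive Taylor comparison is inconclusive and one must argue through the ODE. Setting $w=\lambda-\mu$ and subtracting the two relations yields
\[
w'+\Bigl(\frac{d-1}{l}+\lambda+\mu\Bigr)w=R(l)>0,
\]
so with the integrating factor $e^{A}$, $A'=\frac{d-1}{l}+\lambda+\mu$, this reads $(we^{A})'=R\,e^{A}>0$, i.e.\ $we^{A}$ is strictly increasing. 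The delicate point is the behaviour at the singular endpoint $l=0$: since $A(l)=(d-1)\ln l+(\text{bounded})$ one has $e^{A(l)}\sim C\,l^{d-1}\to0$, while $w(l)\to0$, so $w(l)e^{A(l)}\to0$ as $l\to0^+$, and $R\,e^{A}\sim C'\,l^{d+1}$ is integrable near $0$. Integrating $(we^{A})'=R\,e^{A}$ from $0$ to $l$ gives $w(l)e^{A(l)}=\int_0^l R\,e^{A}\,d\tau>0$, whence $w>0$, that is $\mu<\lambda$ on $(0,\infty)$.

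For the second part ($d=3$) everything is explicit. With $\sin^{d-2}\theta=\sin\theta$, the substitution $c=\cos\theta$ turns the defining integrals into elementary ones, $\int_{-1}^1 e^{lc}\,dc=\tfrac{2\sinh l}{l}$ and $\int_{-1}^1 c\,e^{lc}\,dc=\tfrac{2\cosh l}{l}-\tfrac{2\sinh l}{l^2}$. Dividing the second by the first gives
\[
\lambda(l)=\frac{\cosh l-\sinh l/l}{\sinh l}=\coth l-\frac{1}{l},\qquad l>0,
\]
which is the stated formula.
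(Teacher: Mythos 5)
Your proof is correct, and it follows the same overall strategy as the paper --- exhibit $\mu$ as a strict subsolution of the Riccati equation \eqref{Equ22} satisfied by $\lambda$, then conclude $\mu < \lambda$ by comparison --- but the execution differs in two ways worth noting. First, where the paper differentiates the closed form directly, obtaining $\mu'(l)=\frac{2d}{\sqrt{d^2+4l^2}\,(\sqrt{d^2+4l^2}+d)}$ and $1-\frac{d-1}{l}\mu-\mu^2=\frac{2}{\sqrt{d^2+4l^2}+d}$ and comparing them (their ratio is $d/\sqrt{d^2+4l^2}<1$), you route everything through the algebraic identity $l\mu^2+d\mu=l$; this is equivalent but gives cleaner formulas ($\mu'=(1-\mu^2)/(2l\mu+d)$, defect $R=2\mu^2/(2l\mu+d)$) and yields monotonicity, the value $\mu'(0)=1/d$, and strict concavity without ever touching the square root. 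Second, and more substantially, the paper disposes of $\mu<\lambda$ with the single phrase ``by comparison principle,'' which is in fact the delicate point: the coefficient $(d-1)/l$ is singular at $l=0$ and, as you correctly observe, $\mu$ and $\lambda$ are both odd with $\mu(0)=\lambda(0)=0$ and $\mu'(0)=\lambda'(0)=1/d$, so the comparison is borderline at the initial point and a naive Taylor or Gronwall argument from $l=0$ does not immediately apply. Your integrating-factor computation for $w=\lambda-\mu$, together with the checks that $w e^{A}\to 0$ as $l\to 0^+$ and that $R\,e^{A}\sim C l^{d+1}$ is integrable at the origin, supplies exactly the justification the paper leaves implicit, and is the more rigorous treatment of this step. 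The $d=3$ computation in your second part is identical to the paper's.
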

\begin{proof}$\;$\\
1. By direct computations we obtain
\[
\mu ^\prime (l) = \frac{2d}{\sqrt{d^2 + 4l^2} ( \sqrt{d^2 + 4l^2} + d)}>0,\;\;l \in \R_+
\]
and
\[
1 - \frac{d-1}{l} \mu (l) - \mu ^2 (l) = \frac{2}{\sqrt{d^2 + 4l^2} + d}.
\]
Therefore $\mu$ satisfies the first order differential inequation
\[
\mu ^\prime (l) = \frac{2d}{\sqrt{d^2 + 4l^2} ( \sqrt{d^2 + 4l^2} + d)}< \frac{2}{\sqrt{d^2 + 4l^2} + d} = 1 - \frac{d-1}{l} \mu (l) - \mu ^2 (l),\;\;l >0
\]
and the initial condition $\mu (0) = 0$. Recall that $\lambda $ satisfies the first order differential equation (cf. \eqref{Equ22})
\[
\lambda ^\prime (l) = 1 - \frac{d-1}{l} \lambda (l) - \lambda ^2 (l),\;\;l >0
\]
with the initial condition $\lambda (0) = 0$. By comparison principle, it follows that $\mu (l) < \lambda (l)$ for any $l >0$. Clearly $\mu ^\prime (0) = \frac{1}{d} = \lambda ^\prime (0), \lim _{l \to +\infty } \mu (l) = 1$, $\mu ^\prime (l) >0, l \in \R_+$, and $\mu ^\prime $ is strictly decreasing, saying that $\mu$ is strictly increasing and strictly concave on $\R_+$. \\
2. In the case $d = 3$ we obtain
\[
\pi \beta _0 (l) = \intth{e^{l \cos \theta } \sin \theta } = \frac{e^l - e^{-l}}{l},\;\;l>0
\]
\[
\pi \beta _0 ^\prime (l) = \intth{e^{l \cos \theta} \cos \theta \sin \theta} = \frac{e^l + e^{-l}}{l} - \frac{e^l - e ^{-l}}{l^2},\;\;l>0
\]
implying that
\[
\lambda (l) = \frac{\beta _0 ^\prime (l)}{\beta _0 (l)} = \frac{\cosh(l)}{\sinh(l)} - \frac{1}{l}, l >0.
\]
\end{proof}
In order to exploit the constraint \eqref{Equ9} we will need to compute $Q(F)$, where $F$ is a von Mises-Fisher equilibrium, let us say $F = \mlo (\omega)\dom$. This computation is detailed in the following lemma. The notation $(\cdot, \cdot)$ stands for the pairing between distributions and smooth functions.
\begin{lemma}
Let $F = \mlo (\omega)\dom$ be a von Mises-Fisher equilibrium. Then we have, for any function $\varphi \in C^2 _c (\R^d)$
\[
(Q(F), \varphi) = \sigma \frac{\mlo}{M}
\frac{\mathrm{d}}{\mathrm{d}t}_{|_{t=r}} \int _{|\omega _t| = t}
M(\omega _t) (\nabla _v \varphi )(\omega _t) \cdot \frac{\omega
_t}{t} \;\mathrm{d}\omega _t
\]
where $M(v) = \exp \left ( - \frac{|v-u[F]|^2}{2\sigma}\right ), v \in \R^d$.
\end{lemma}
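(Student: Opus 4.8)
The plan is to reduce $(Q(F),\varphi)$ to the surface integral over $\rsphere$ of a single exact divergence, and then to recognize that integral as the $t$-derivative at $t=r$ of a flux through the concentric spheres $\{|\omega_t|=t\}$.

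First I would start from the duality definition of the collision operator recalled at the beginning of this section, namely
\[
(Q(F),\varphi) = \intv{[-(v-u[F])\cdot\nabla_v\varphi(v) + \sigma\Delta_v\varphi(v)]\,F},
\]
and use that $F = \mlo\dom$ is carried by $\rsphere$ to rewrite it as
\[
(Q(F),\varphi) = \into{[-(\omega-u[F])\cdot\nabla_v\varphi(\omega) + \sigma\Delta_v\varphi(\omega)]\,\mlo(\omega)}.
\]
Here $\nabla_v\varphi$ and $\Delta_v\varphi$ are the full operators on $\R^d$ evaluated on the sphere, not their tangential counterparts; this is what prevents us from directly reusing the computation in the proof of Proposition \ref{AveEqui}, which applied only to functions of the form $\tpsi(r\vsv)$.

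The key point is that $M$ plays the role of an integrating factor. Since $\nabla_v M = -\sigma^{-1}(v-u[F])M$, we have $-(v-u[F])M = \sigma\nabla_v M$, so that for $v=\omega$
\[
[-(\omega-u[F])\cdot\nabla_v\varphi + \sigma\Delta_v\varphi]\,M(\omega) = \sigma[\nabla_v M\cdot\nabla_v\varphi + M\Delta_v\varphi](\omega) = \sigma\,\Divv(M\nabla_v\varphi)(\omega).
\]
I would then recall, exactly as in the proof of Proposition \ref{AveEqui}, that along $\rsphere$ the Maxwellian $M$ and the equilibrium density $\mlo$ are proportional: because $u[F] = \tfrac{\sigma l}{r}\Omega$, both are multiples of $\exp(l\,\Omega\cdot\omega/r)$, so the ratio $\mlo/M$ is a genuine constant on $\rsphere$ and may legitimately be pulled out as the scalar prefactor appearing in the statement. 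Substituting $\mlo = (\mlo/M)\,M$ together with the exact-divergence identity gives
\[
(Q(F),\varphi) = \sigma\,\frac{\mlo}{M}\into{\Divv(M\nabla_v\varphi)(\omega)}.
\]

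Finally I would identify the remaining surface integral with a flux derivative. Setting $A = M\nabla_v\varphi$, which is a compactly supported $C^1$ vector field since $\varphi\in C^2_c(\R^d)$ and $M$ is smooth, the divergence theorem on the ball $\{|v|<t\}$ gives the flux identity
\[
\int_{|\omega_t|=t} A(\omega_t)\cdot\frac{\omega_t}{t}\,\md\omega_t = \int_{|v|<t}(\Divv A)(v)\,\dv,
\]
and differentiating in $t$ (equivalently, by the coarea formula, or by differentiating the flux and invoking the integration-by-parts identity of Lemma \ref{IntByPartsSphere}) yields
\[
\frac{\md}{\md t}\Big|_{t=r}\int_{|\omega_t|=t} A(\omega_t)\cdot\frac{\omega_t}{t}\,\md\omega_t = \into{(\Divv A)(\omega)} = \into{\Divv(M\nabla_v\varphi)(\omega)}.
\]
Combining this with the previous display produces the claimed formula. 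The step requiring the most care is the exact-divergence reduction: one must check that the full-space operators acting on $\varphi$ genuinely recombine into $\Divv(M\nabla_v\varphi)$ through the integrating factor $M$, and simultaneously that $\mlo/M$ is constant on $\rsphere$; once these are in place, the passage to the flux derivative is routine.
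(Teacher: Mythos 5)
Your proposal is correct and follows essentially the same route as the paper's own proof: the duality formulation, the integrating-factor identity $\sigma\,\Divv(M\nabla_v\varphi)=M\,[\,\sigma\Delta_v\varphi-(v-u[F])\cdot\nabla_v\varphi\,]$, the constancy of $\mlo/M$ on $\rsphere$, and the identification of $\into{\Divv(M\nabla_v\varphi)}$ with the $t$-derivative at $t=r$ of the flux through $\{|\omega_t|=t\}$ via the divergence theorem and the coarea formula. The only difference is cosmetic: the paper passes from the surface integral to the derivative of the ball integral and then to the flux, while you start from the flux identity and differentiate, which amounts to the same two ingredients applied in reverse order.
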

\begin{proof}
Pick a test function $\varphi \in C^2 _c (\R^d)$ and notice that
\begin{align*}
(Q(F), \varphi) & = ( F, \sigma \Delta _v \varphi - (v- u[F]) \cdot \nabla _v \varphi)\\
& = \left ( F, \sigma \frac{\Divv ( M \nabla _v \varphi)}{M(v)}\right )\\
& = \sigma \into{\Divv(M \nabla _v \varphi) (\omega)
\frac{\mlo(\omega)}{M(\omega)}}.
\end{align*}
It is easily seen that the function $\frac{\mlo}{M}$ is constant on the sphere $\rsphere$
\[
\frac{\mlo(\omega)}{M(\omega)} = \frac{\exp\left (\frac{r^2 + |u[F]|^2 }{2\sigma}   \right)}{\intopr{\exp \left (l \Omega \cdot \frac{\omega ^\prime}{r}\right )}},\;\;\omega \in \rsphere
\]
and therefore we have
\begin{align*}
(Q(F), \varphi) & = \sigma \frac{\mlo}{M} \frac{\mathrm{d}}{\mathrm{d}t}_{|_{t = r}} \int _{|v|< t} \Divv ( M \nabla _v \varphi )\;\mathrm{d}v \\
& = \sigma \frac{\mlo}{M}\frac{\mathrm{d}}{\mathrm{d}t}_{|_{t =
r}}\int _{|\omega _t | = t} M (\omega _t) \nabla _v \varphi
(\omega _t) \cdot \frac{\omega _t}{t} \;\mathrm{d}\omega _t.
\end{align*}
\end{proof}
Thanks to the above result, we can determine $\Fo - \ave{\Fo}$ in terms of $F$. More exactly we prove
\begin{lemma}
\label{FOneF}
Let $F = \mlo(\omega)\dom$ be a von Mises-Fisher equilibrium and $\Fo$ a bounded measure such that
\[
\Divv\{\Fo (\alpha - \beta |v|^2 ) v\} = Q(F).
\]
Then for any function $\chi \in \coc$, such that $\chi |_{\rsphere} = 0$ we have
\begin{align*}
\intv{\chi (v) \left ( \Fo - \ave{\Fo}\right )} & = \int_{v \neq 0} \chi (v) \Fo \;\dv  \\
& = \sigma \frac{\mlo}{M} \frac{\mathrm{d}}{\mathrm{d}t}_{|_{t =
r}} \int _{|\omega _t| = t}  \frac{M(\omega_t)\chi (\omega _t)}{t
\beta (t^2 - r^2)}\;\mathrm{d}\omega _t.
\end{align*}
\end{lemma}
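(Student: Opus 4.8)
The identity to be shown consists of two equalities. The left one follows directly from the definition of the average operator $\ave{\cdot}$ together with the hypothesis $\chi|_{\rsphere}=0$. Since $r\vsv\in\rsphere$ for every $v\neq 0$, we have $\chi\left(r\vsv\right)=0$, so
\[
\intv{\chi(v)\ave{\Fo}} = \int_{v=0}\chi(v)\Fo\,\dv + \int_{v\neq 0}\chi\left(r\vsv\right)\Fo\,\dv = \int_{v=0}\chi(v)\Fo\,\dv .
\]
Subtracting this from $\intv{\chi(v)\Fo}=\int_{v=0}\chi(v)\Fo\,\dv+\int_{v\neq 0}\chi(v)\Fo\,\dv$ yields at once $\intv{\chi(\Fo-\ave{\Fo})}=\int_{v\neq 0}\chi(v)\Fo\,\dv$.

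For the second equality the plan is to test the constraint $\Divv\{\Fo\abv\}=Q(F)$ against a suitable function and to recognise $(Q(F),\cdot)$ through the previous lemma. I would first localise the problem. Applying the adjoint construction of Lemma \ref{AdjointProblem} to any $\psi\in C^\infty_c(\R^d\setminus(\A))$ produces $\varphi$ with $-\abv\cdot\nabla_v\varphi=\psi$; since $\psi$ vanishes in a neighbourhood of $\rsphere$, the relation $\beta(r^2-|v|^2)|v|\,(\nabla_v\varphi\cdot\vsv)=-\psi$ forces the radial derivative $\nabla_v\varphi\cdot\vsv$ to vanish near $\rsphere$, and the previous lemma then gives $(Q(F),\varphi)=0$. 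Hence $(\Fo,\psi)=(\Divv\{\Fo\abv\},\varphi)=(Q(F),\varphi)=0$ for all such $\psi$, so $\Fo$ is supported in $\A$. In particular $\int_{v\neq 0}\chi\,\Fo$ only feels $\Fo$ on $\rsphere$, and only the behaviour of a test function near $\rsphere$ will matter.

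Next I would construct $\varphi\in C^2_c(\R^d)$, vanishing near the origin, solving $\abv\cdot\nabla_v\varphi = \chi$ in a neighbourhood of $\rsphere$. Writing $s=|v|$ and using $\alpha=\beta r^2$, this is the radial equation $\beta(r^2-s^2)s\,\partial_s\varphi=\chi$, integrated ray by ray; away from $\rsphere$ the precise form of $\varphi$ is irrelevant, since $\Fo$ and $Q(F)$ are supported on $\rsphere$. Because $\abv\cdot\nabla_v\varphi=\chi$ holds on $\rsphere$ and $\varphi\equiv 0$ near $0$, the definition of the distributional divergence gives
\[
\int_{v\neq 0}\chi(v)\,\Fo\,\dv = (\Fo,\abv\cdot\nabla_v\varphi) = -\,(\Divv\{\Fo\abv\},\varphi) = -\,(Q(F),\varphi).
\]
Evaluating $\abv\cdot\nabla_v\varphi=\chi$ on the sphere $|v|=t$ near $r$ yields $\nabla_v\varphi(\omega _t)\cdot\frac{\omega _t}{t}=-\,\chi(\omega _t)/\bigl(t\beta(t^2-r^2)\bigr)$, so the previous lemma produces
\[
(Q(F),\varphi) = \sigma\frac{\mlo}{M}\frac{\md}{\md t}_{|_{t=r}}\int_{|\omega _t|=t} M(\omega _t)\,\nabla_v\varphi(\omega _t)\cdot\frac{\omega _t}{t}\,\md\omega _t = -\,\sigma\frac{\mlo}{M}\frac{\md}{\md t}_{|_{t=r}}\int_{|\omega _t|=t}\frac{M(\omega _t)\chi(\omega _t)}{t\beta(t^2-r^2)}\,\md\omega _t .
\]
Combining the last two displays gives exactly the asserted formula, the two sign changes cancelling.

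The main obstacle is the construction of $\varphi$. The coefficient $1/\bigl(\beta(r^2-s^2)s\bigr)$ is singular on $\rsphere$, and it is precisely the assumption $\chi|_{\rsphere}=0$ that makes $\chi(s\theta)/(r^2-s^2)$ have a finite limit as $s\to r$, so that $\partial_s\varphi$ extends across the sphere and the right-hand side is finite; this is the structural heart of the statement. A secondary technical point is regularity: the previous lemma requires $\varphi\in C^2$, whereas $\chi$ is only $C^1$, so one should either assume $\chi$ smooth and conclude by density, or verify directly that $t\mapsto\int_{|\omega _t|=t}M\chi/\bigl(t\beta(t^2-r^2)\bigr)\,\md\omega _t$ is differentiable at $t=r$, which again rests on $\chi$ vanishing on $\rsphere$.
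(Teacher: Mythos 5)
Your proposal is correct and follows essentially the same route as the paper: you test the constraint against a solution $\varphi$ of the adjoint problem $\pm\abv \cdot \nabla _v \varphi = \chi$ (cf.\ Lemma \ref{AdjointProblem}), invoke the preceding lemma's representation of $(Q(F),\varphi)$ as a $t$-derivative at $t=r$ of a flux integral, and substitute the normal derivative $\nabla _v \varphi (\omega _t) \cdot \frac{\omega _t}{t} = \mp \chi (\omega _t)/\bigl(t\beta (t^2 - r^2)\bigr)$, your two sign changes cancelling exactly as the paper's single sign convention does. Your additional scaffolding --- the preliminary localization of $\Fo$ near $\A$ (which the paper never states, solving the adjoint problem globally instead) and the explicit remark that $\chi \in \coc$ falls short of the $C^2$ regularity the previous lemma nominally requires --- does not alter the argument and matches, indeed slightly exceeds, the formal level of rigor of the paper's own proof.
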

\begin{proof}
For any function $\varphi \in \coc$, we know that
\begin{align*}
- \intv{\abv \cdot \nabla _v \varphi \;\Fo} & = (Q(F), \varphi) \\
& = \sigma \frac{\mlo}{M} \frac{\mathrm{d}}{\mathrm{d}t}_{|_{t =
r}} \int _{|\omega _t| = t} M (\omega _t) \nabla _v \varphi
(\omega _t) \cdot \frac{\omega _t}{t} \;\mathrm{d}\omega _t.
\end{align*}
The idea is to solve the adjoint problem (cf. Lemma \ref{AdjointProblem})
\begin{equation*}
\label{EquAdjProb}
- (\alpha - \beta |v|^2) v \cdot \nabla _v \varphi = \chi (v)
\end{equation*}
and to express the normal derivative of $\varphi$ in terms of $\chi$. Indeed, for any $\omega _t \in t \sphere$, we have
\[
\nabla _v \varphi (\omega _t) \cdot \frac{\omega _t}{t} = \frac{\chi (\omega _t)}{t (\beta t^2 - \alpha)} = \frac{\chi (\omega _t)}{t \beta (t^2 - r^2)}.
\]
Finally we obtain the formula
\[
\int _{v \neq 0} \chi (v) \Fo\;\dv = (Q(F), \varphi) = \sigma
\frac{\mlo}{M} \frac{\mathrm{d}}{\mathrm{d}t}_{|_{t = r}} \int
_{|\omega _t| = t}  \frac{M(\omega_t)\chi (\omega _t)}{t \beta
(t^2 - r^2)}\;\mathrm{d}\omega _t.
\]
\end{proof}
Once we have determined the form of the dominant distribution
$f(t,x,v) = \rho (t,x) M_{l\Omega (t,x)} \dom$, we search for
macroscopic equations characterizing $\rho (t,x)$ and $\Omega
(t,x)$. For doing that, we use the moments of \eqref{Equ12} with
respect to the velocity. The key point is how to eliminate $\fo$
in the right hand side of \eqref{Equ12}. Notice that this right
hand side is the linearization around $f$, with $\intv{f} >0$,
computed in the direction $\fo$, of the average collision kernel
$Q$
\begin{align*}
\mathcal{L}_f(f^{(1)}):=\lime\frac{\ave{Q(f + \eps \fo)} - \ave{Q(f)}}{\eps} = &\, \ave{\Divv \left [\fo (v-u[f]) + \sigma \nabla _v \fo   \right ]}\\
& - \ave{\Divv \left [f \frac{\intvp{\fo (v ^\prime - u[f])}}{\intvp{f}}    \right] }\\
= &\, \ave{\Divv A_f (\fo)}
\end{align*}
where
\[
A_f (\fo) = \left [\fo (v-u[f]) + \sigma \nabla _v \fo   \right ]-
f \frac{\intvp{\fo (v ^\prime - u[f])}}{\intvp{f}}.
\]
We are looking for functions such that
\begin{equation}
\label{Equ23} \intv{\psi (v)\ave{\Divv A_f (\fo)}}
\end{equation}
can be expressed in terms of the velocity moments of $f$, in order
to get a closure for the macroscopic quantities $\rho (t,x),
\Omega (t,x)$. For example $\psi (v) = 1$ leads to the continuity
equation
\[
\partial _t \intv{f} + \Divx  \intv{v f}  = 0
\]
which also writes
\[
\partial _t \rho + \Divx \left (\rho \frac{\sigma}{r} l \Omega  \right )= 0.
\]
Naturally, we need to find other functions $\psi$, which will
allow us to characterize the time evolution of the orientation
$\Omega$. Recall that the constraint \eqref{Equ9} determines $\fo
- \ave{\fo}$ (in terms of $f$), but not $\ave{\fo}$, as Lemma
\ref{FOneF} implies. Motivated by this, we are looking for
functions $\psi$ such that
\begin{equation*}
\label{Equ24} \intv{\psi (v) \ave{\Divv A_f (\go)}} = 0
\end{equation*}
for any measures $f, \go$ supported in $\R^d \times \rsphere$. Indeed, in that case the expression in \eqref{Equ23} can be computed in terms of $f$, provided that we neglect the mass of $\fo$ at $\R^d \times \{0\}$
\begin{align*}
\intv{\psi (v) \ave{\Divv A_f (\fo)}} = &\,\intv{\psi  \ave{\Divv A_f \ave{\fo}}} \\
& +\intv{\psi  \ave{\Divv A_f \left [ \fo - \ave{\fo}\right ]}}  \\
= &\, \intv{\psi (v) \ave{\Divv A_f \left [ \fo - \ave{\fo}\right
]}}.
\end{align*}
Let us concentrate now on the collision invariants of the average
collision operator. Recall that the linearized of $\ave{Q}$,
around a measure $F$ such that $\intv{F}>0$, writes
\[
\lime \frac{\ave{Q(F + \eps \Fo)} - \ave{Q(F)}}{\eps} = \ave{\Divv A_F (\Fo)}
\]
where
\[
A_F (\Fo) = \left [\Fo (v-u[F]) + \sigma \nabla _v \Fo \right] -
F \frac{\intvp{\Fo (v^\prime - u[F])}}{\intvp{F}}.
\]
We search for functions $\psi = \psi (v)$ such that
\begin{equation}
\label{Equ41}
\intv{\psi (v) \ave{\Divv A_F (\Go)}} = 0
\end{equation}
for any bounded measures $F, \Go$ supported in $\rsphere$.
Actually, since we already know that the dominant term is a von
Mises-Fisher distribution, it is enough to impose \eqref{Equ41}
only for $F = \mlo\dom$, with $\lambda (l) = \frac{\sigma}{r^2}
l$, for some given $\Omega \in \sphere$. Doing that, to any
orientation $\Omega$, we associate a family of suitable
pseudo-collision invariants, allowing us to determine the
macroscopic equations satisfied by the moments $\rho, \Omega$. A
similar construction was done in \cite{DM08}, baptized as
generalized collision invariants. Even if our approach is not
exactly the same as in \cite{DM08}, we will continue referring to
them as generalized collision invariants. Notice that once we have
determined $\psi$ such that \eqref{Equ41} is verified for any
bounded measure $\Go$ supported in $\rsphere$, we need to check
that \eqref{Equ41} still holds true for any bounded measure, not
necessarily supported in $\rsphere$, satisfying the constraint
\eqref{Equ9} (see Proposition \ref{ZeroAvePart} and \ref{C}). The
condition \eqref{Equ41} should be understood in the following
sense
\[
\int_{v \neq 0} \tpsi \left ( r \vsv\right) \Divv \{A_F (\Go)\}\;\dv = 0,\;\;F = \mlo \;\dom
\]
for any $\Go \in {\mathcal M}_b (\R^d)$, $\supp \Go \subset \rsphere$, that is
\begin{align}
\label{Equ42}
& \int_{v\neq 0} \left \{- (v - u[F]) \cdot \nabla _v \left[\tpsi \left ( r \vsv \right) \right]  + \sigma \Delta _v \left[\tpsi \left ( r \vsv \right) \right]\right \}\Go\;\dv \nonumber \\
& + \int_{v \neq 0} \frac{\int_{v^\prime  \neq 0} (v ^\prime - u[F])\Go\;\mathrm{d}v^\prime}{\intvp{F}} \cdot \nabla _v \left[\tpsi \left ( r \vsv \right) \right] F\;\dv = 0
\end{align}
for $F = \mlo\dom$ and any $\Go \in {\mathcal M}_b (\R^d)$, $\supp \Go \subset \rsphere$. Taking into account the equalities
\[
\nabla _v \left [\tpsi \left ( r \vsv \right )  \right ] = \nabla _\omega \tpsi,\;\;\Delta _v \left [\tpsi \left ( r \vsv \right )  \right ] = \Delta _\omega \tpsi,\;\;|v| = r
\]
the condition \eqref{Equ42} becomes
\begin{align}
\label{Equ43} (\omega - u[\mlo] ) \cdot \nabla _\omega \tpsi
-\sigma \Delta_\omega \tpsi = ( \omega - u[\mlo]) \cdot
\frac{\intopr{\nabla _{\omega ^\prime} \tpsi \mlo
}}{\intopr{\mlo}}= 0.
\end{align}


\section{The generalized collision invariants}
\label{ColInv}
In this section, we concentrate on the resolution
of the linear equation \eqref{Equ43}. If we introduce the vector
\begin{equation*}
\label{Equ61}
W[\tpsi] = \frac{\into{\nabla _\omega \tpsi \mlo(\omega)}}{\into{\mlo (\omega)}} = \into{\nabla _\omega \tpsi \mlo(\omega)}
\end{equation*}
the equation \eqref{Equ43} becomes elliptic on $\rsphere$ and
reads
\begin{equation}
\label{Equ62}
- \sigma \Divo ( \mlo \nabla _\omega \tpsi ) = \mlo (\omega) ( \omega - u[\mlo]) \cdot W[\tpsi].
\end{equation}
Any solution of equation \eqref{Equ62} will be called a
generalized collision invariant of the average collision operator
$\ave{Q}$.

The solvability of \eqref{Equ62} requires that the integral of the
right hand side over $\rsphere$ vanishes, i.e.,
$$
\into{\mlo (\omega) ( \omega - u[\mlo]) \cdot W[\tpsi]}=0
$$
which is true, by the definition of the mean velocity $u[\mlo]$.
But there is another compatibility condition to be fullfiled. Take
any vector $\Wp \in \R^d$ and multiply the equation \eqref{Equ62}
by the scalar function $\omega \to \Wp \cdot \omega$, whose
gradient along $\rsphere$ is $\imoo \Wp$. Integrating by parts
yields
\[
\sigma \!\into{\!\!\!\!\mlo (\omega) \nabla _\omega \tpsi } \cdot
\Wp = \into{\! \!\!\!\mlo (\omega) (\omega - u[\mlo])\otimes (
\omega - u[\mlo]) } : W[\tpsi]\otimes \Wp
\]
saying that $W[\tpsi]$ is an eigenvector of the matrix
\[
\calmlo := \into{\mlo (\omega) (\omega - u[\mlo])\otimes ( \omega - u[\mlo])}
\]
corresponding to the eigenvalue $\sigma$. The following lemma details the spectral properties of the matrix $\calmlo$.

\begin{lemma}
\label{SpecProp}
For any $l \in \R_+$ such that $\lambda (l) = \frac{\sigma}{r^2}l$, and $\Omega \in \rsphere$, the matrix $\calmlo$ is symmetric, definite positive and
\[
{\mathcal M}_{0\Omega} = \frac{r^2}{d}I_d,\;\;\calmlo = (r^2 - (d-1) \sigma - |u|^2) \Omega \otimes \Omega + \sigma (I_d - \Omega \otimes \Omega),\;\;l>0,\;\;0 < \frac{\sigma}{r^2} < \frac{1}{d}.
\]
If $0 < \frac{\sigma}{r^2} < \frac{1}{d}$, we have $r^2 - (d-1) \sigma - |u|^2 < \sigma$ and, in particular $\ker ( \calmlo - \sigma I_d) = (\R \Omega) ^\bot$.
\end{lemma}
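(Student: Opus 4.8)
The plan is to exploit the rotational symmetry of the von Mises--Fisher measure about the axis $\Omega$, which forces $\calmlo$ to be a combination of $\Omega\otimes\Omega$ and $I_d-\Omega\otimes\Omega$, and then to compute its two eigenvalues explicitly. Symmetry of $\calmlo$ is immediate, since each rank-one tensor $(\omega-u[\mlo])\otimes(\omega-u[\mlo])$ is symmetric. For positive definiteness I would test against an arbitrary $\xi\in\R^d$: the quadratic form equals $\into{\mlo(\omega)\,((\omega-u[\mlo])\cdot\xi)^2}$, which is nonnegative and vanishes only if $\omega\mapsto\omega\cdot\xi$ is constant on the whole sphere $\rsphere$, forcing $\xi=0$. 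For the degenerate case $l=0$, the measure $M_{0\Omega}$ is uniform on $\rsphere$, so $u[M_{0\Omega}]=0$ and by isotropy $\calmlo=c\,I_d$; taking traces and using $|\omega|^2=r^2$ gives $cd=r^2$, hence ${\mathcal M}_{0\Omega}=\frac{r^2}{d}I_d$.

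For $l>0$, recall that $u[\mlo]=|u|\Omega$ with $|u|=r\lambda(l)=\frac{\sigma l}{r}$. Writing $\langle g\rangle:=\into{g(\omega)\,\mlo(\omega)}$ for the average against the equilibrium, rotational invariance about $\Omega$ yields $\calmlo=a\,\Omega\otimes\Omega+b\,(I_d-\Omega\otimes\Omega)$ with $a=\Omega^\top\calmlo\,\Omega$ and $b=e^\top\calmlo\,e$ for any unit $e\perp\Omega$. To get $a$ I would write $(\omega-u[\mlo])\cdot\Omega=r\cos\theta-|u|$, expand the square, and apply formula \eqref{ForMag} together with $\langle\cos\theta\rangle=\lambda(l)$. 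The key input is the second moment $\langle\cos^2\theta\rangle$: dividing the Bessel-type identity \eqref{EquBessel} by $l^2\beta_0(l)$ gives $\langle\cos^2\theta\rangle=1-\frac{d-1}{l}\lambda(l)$, whence $r^2\langle\cos^2\theta\rangle=r^2-(d-1)\sigma$ after using $\frac{r^2}{l}\lambda(l)=\sigma$. Collecting terms yields $a=r^2-(d-1)\sigma-|u|^2$. For $b$, since $e\perp\Omega$ one has $(\omega-u[\mlo])\cdot e=\omega\cdot e$, and averaging the perpendicular component over the $d-1$ directions orthogonal to $\Omega$ (again by symmetry) gives $b=\frac{r^2}{d-1}\langle\sin^2\theta\rangle=\frac{r^2}{d-1}(1-\langle\cos^2\theta\rangle)=\frac{r^2}{l}\lambda(l)=\sigma$. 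This establishes the stated formula for $\calmlo$.

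It remains to prove the strict inequality $r^2-(d-1)\sigma-|u|^2<\sigma$ together with the kernel identity, and this is where I expect the real work to lie. The inequality is equivalent to $1-\frac{d\sigma}{r^2}-\lambda^2(l)<0$; substituting $\frac{d-1}{l}\lambda(l)=(d-1)\frac{\sigma}{r^2}$ (valid at our root) into the ODE \eqref{Equ22} reduces it to the clean condition $\lambda'(l)<\frac{\sigma}{r^2}$. The point is then to locate the root $l$ relative to the maximizer of $g(l):=\lambda(l)-\frac{\sigma}{r^2}l$. From the analysis carried out in the proof of Proposition \ref{IsotropicEquilibrium}, when $\frac{\sigma}{r^2}\in]0,\frac1d[$ the function $g$ is strictly increasing on $[0,\tilde l]$ and strictly decreasing on $[\tilde l,+\infty[$, where $\lambda'(\tilde l)=\frac{\sigma}{r^2}$; since $g(0)=0$ and $g\to-\infty$, its unique positive zero $l$ lies in $]\tilde l,+\infty[$, where $g'<0$, i.e. $\lambda'(l)<\frac{\sigma}{r^2}$, which is precisely the inequality. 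Finally, the eigenvalues of $\calmlo$ are the simple value $a=r^2-(d-1)\sigma-|u|^2$ with eigenvector $\Omega$, and $\sigma$ with multiplicity $d-1$ and eigenspace $(\R\Omega)^\bot$; since the inequality shows $a<\sigma$, the eigenvalue $\sigma$ cannot be attained along $\Omega$, so $\ker(\calmlo-\sigma I_d)=(\R\Omega)^\bot$.
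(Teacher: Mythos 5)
Your proposal is correct, and it diverges from the paper in an interesting way on the one genuinely delicate point. The identification of the two eigenvalues is essentially the paper's computation in different clothing: the paper expands $\calmlo$ in an orthonormal basis $\{E_1,\dots,E_{d-1},\Omega\}$ and evaluates $\into{(\omega\cdot\Omega)^2\mlo}=r^2-(d-1)\sigma$ by a direct integration by parts in $\theta$ together with $\lambda(l)=\frac{\sigma}{r^2}l$, whereas you obtain the same second moment by dividing the Bessel identity \eqref{EquBessel} by $l^2\beta_0(l)$ — the two are the same integration by parts, packaged once versus on the fly, and your symmetry reduction to $a\,\Omega\otimes\Omega+b\,(I_d-\Omega\otimes\Omega)$ is implicit in the paper's basis computation anyway. (Your positive-definiteness argument is actually more complete than the paper's ``clearly''.) The real difference is the strict inequality $r^2-(d-1)\sigma-|u|^2<\sigma$: the paper deduces it from the explicit comparison function $\mu(l)=\frac{2l}{\sqrt{d^2+4l^2}+d}$ of Lemma \ref{Approximation}, using the differential-inequality bound $\mu(l)<\lambda(l)$ and then elementary algebra with $l=\frac{|u|r}{\sigma}$; you instead evaluate the Riccati equation \eqref{Equ22} at the root to show the inequality is \emph{equivalent} to the transversality condition $\lambda'(l)<\frac{\sigma}{r^2}$, which holds because the positive root lies strictly beyond the maximizer $\tilde l$ of $g(l)=\lambda(l)-\frac{\sigma}{r^2}l$. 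Your route is shorter, dispenses with Lemma \ref{Approximation} entirely, and exposes the structural meaning of the spectral gap (the nontrivial root is a transversal, ``stable'' crossing); the paper's route buys a quantitative explicit lower bound on $\lambda$ that is reusable elsewhere. One small presentational caveat: you invoke the increasing/decreasing structure of $g$ from \emph{inside} the proof of Proposition \ref{IsotropicEquilibrium} rather than from its statement; this is harmless, since the stated strict concavity of $\lambda$ (hence of $g$), together with $g(0)=g(l)=0$ and $g\not\equiv 0$, already forces $g'(l)<0$ at the second zero — you could cite that one-line concavity argument instead and make the step self-contained.
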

\begin{proof}
Clearly $\calmlo$ is symmetric and definite positive. The case $l
= 0$ is trivial, and we have ${\cal M}_{0\Omega} =
\frac{r^2}{d}I_d$. Assume now that $l>0$ and thus necessarily
$\frac{\sigma}{r^2} \in ]0,\frac{1}{d}[$ cf. Proposition
\ref{IsotropicEquilibrium}. We consider a orthonormal basis
$\{E_1, ..., E_{d-1}, \Omega\}$. It is easily seen that
\begin{align*}
\calmlo & = \into{ (\omega - u) \otimes \omega \mlo} \\
& = \into{ \! \left[((\omega \cdot \Omega) - |u|)\Omega + \sum _{i = 1}^{d-1} (\omega \cdot E_i)E_i\right] \!\!\otimes\!\!
            \left[(\omega \cdot \Omega)\Omega + \sum _{i = 1}^{d-1} (\omega \cdot E_i)E_i\right]\!\mlo\!}\\
& = \into{\!\!\!\!\!\!\! ((\omega \cdot \Omega) - |u|)(\omega \cdot \Omega)\mlo }\;\Omega \otimes \Omega + \sum _{i = 1}^{d-1} \into{ (\omega \cdot E_i)^2 \mlo} \;E_i \otimes E_i \\
& = \into{ \!\!\!\!\!\!\!\!\!((\omega \cdot \Omega)^2 - |u|^2)\mlo }\;\Omega \otimes \Omega + \into{\!\!\!\!\!\!\!\!\frac{(r^2 - (\omega \cdot \Omega )^2 )}{d-1}\mlo} (I_d - \Omega \otimes \Omega).
\end{align*}
We show that
\[
\into{(\omega \cdot \Omega)^2 \mlo } = r^2 - (d-1) \sigma.
\]
This comes by the condition $\lambda (l) = \frac{\sigma}{r^2}l$ and integrations by parts
\begin{align*}
r^2 - \into{(\omega \cdot \Omega)^2 \mlo } & = \frac{r^2\intth{e^{l \cos \theta} \sin^d \theta}}{\intth{ e ^{l \cos \theta}\sin ^{d-2} \theta}}\\
& = -\frac{r^2}{l} \frac{\intth{\frac{\mathrm{d}}{\mathrm{d}\theta}e^{l\cos \theta}\sin ^{d-1} \theta}}{\intth{ e ^{l \cos \theta}\sin ^{d-2} \theta}}\\
& = (d-1) \frac{r^2}{l}\frac{\intth{ \cos \theta e ^{l \cos \theta}\sin ^{d-2} \theta}}{\intth{ e ^{l \cos \theta}\sin ^{d-2} \theta}}\\
& = (d-1) \frac{r^2}{l}\lambda (l)\\
& = (d-1) \frac{r^2}{l}\frac{\sigma}{r^2}l = (d-1) \sigma.
\end{align*}
We deduce also that
\begin{align*}
\into{ ((\omega \cdot \Omega)^2 - |u|^2)\mlo } & = r^2 - (d-1) \sigma - |u|^2
\end{align*}
and therefore
\[
\calmlo = (r^2 - (d-1) \sigma - |u|^2) \Omega \otimes \Omega + \sigma (I_d - \Omega \otimes \Omega).
\]
We claim that the biggest eigenvalue is $\sigma$, that is $r^2 - (d-1) \sigma - |u|^2 < \sigma$, or equivalently $r^2 < d\sigma + |u|^2$. This is a consequence of Lemma  \ref{Approximation}. Indeed, since $l>0$, we know that
\[
\mu (l) = \frac{2l}{\sqrt{d^2 + 4l^2} + d} < \lambda (l) = \frac{\sigma}{r^2} l
\]
implying that
\[
\sqrt{d^2 + 4l^2} > \frac{2r^2}{\sigma} - d >0,\;\;\mbox{since } r^2 > d \sigma
\]
or equivalently
\[
4l^2 > 4 \frac{r^4}{\sigma ^2} - 4 d \frac{r^2}{\sigma}.
\]
Replacing $l = \frac{|u|r}{\sigma}$ in the above inequality, yields $r^2 < d \sigma + |u|^2$.
\end{proof}
The resolution of \eqref{Equ43} follows immediately, thanks to Lemma \ref{SpecProp}. As \eqref{Equ43} is linear and admits any constant function on $\rsphere$ as solution, we will work with zero mean solutions on $\rsphere$, that is $\into{\tpsi (\omega)} = 0$.
\begin{pro}
\label{CollInvBis}
Let $\mlo$ be a von Mises-Fisher distribution {\it i.e.,} $\Omega \in \sphere, l \in \R_+, \lambda (l) = \frac{\sigma}{r^2}l$, and $E_1, ..., E_{d-1}$ be a orthonormal basis of $(\R \Omega)^\bot$.
\begin{enumerate}
\item
If $l = 0$ and $\frac{\sigma}{r^2} \neq \frac{1}{d}$, then the only (zero mean) solution of \eqref{Equ43} is the trivial one.

\item If $l = 0$ and $\frac{\sigma}{r^2} = \frac{1}{d}$, then the
family of zero mean solutions for \eqref{Equ43} is a linear space
of dimension $d$. A basis is given by the functions $\tpsi _1,
...,\tpsi _d$ satisfying
\begin{equation}
\label{Equ63} - \sigma \Divo (M_{0\Omega} \nabla _\omega \tpsi _i)
= M_{0\Omega} (\omega) (\omega \cdot E_i),\;\into{\tpsi _i
(\omega)} = 0,
\end{equation}
for $i \in \{1, ..., d\}$ and $E_d = \Omega$.

\item
If $0 < \frac{\sigma}{r^2}<\frac{1}{d}, l>0, \lambda (l) = \frac{\sigma}{r^2}l$, then the family of zero mean solutions for \eqref{Equ43} is a linear space of dimension $d-1$. A basis is given by the functions $\tpsi _1, ..., \tpsi _{d-1}$ satisfying
\begin{equation}
\label{Equ65} - \sigma \Divo ( \mlo \nabla _\omega \tpsi _i) =
\mlo (\omega) ( \omega \cdot E_i),\;\;\into{\tpsi _i (\omega)} =
0,
\end{equation}
for $i \in \{1, ..., d-1\}$.
\end{enumerate}
\end{pro}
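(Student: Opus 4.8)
The plan is to establish a linear bijection between the space of zero-mean solutions $\tpsi$ of \eqref{Equ43} and the eigenspace $\ker(\calmlo - \sigma I_d)$, after which the three cases are read off directly from Lemma \ref{SpecProp}. Recall that \eqref{Equ43} is equivalent to the elliptic identity \eqref{Equ62}, and that testing \eqref{Equ62} against a linear function $\omega \mapsto \Wp \cdot \omega$ already forces $W[\tpsi] \in \ker(\calmlo - \sigma I_d)$. First I would observe that the map $\tpsi \mapsto W[\tpsi]$ is injective on zero-mean solutions: if two such solutions $\tpsi_1, \tpsi_2$ share the same value of $W$, then $\chi := \tpsi_1 - \tpsi_2$ solves the homogeneous equation $-\sigma \Divo(\mlo \nabla_\omega \chi) = 0$ with $\into{\chi} = 0$; multiplying by $\chi$ and integrating by parts gives $\sigma \into{\mlo |\nabla_\omega \chi|^2} = 0$, forcing $\chi$ constant, hence $\chi = 0$. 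This already yields the upper bound $\dim \le \dim \ker(\calmlo - \sigma I_d)$.

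For surjectivity, given $V \in \ker(\calmlo - \sigma I_d)$ I would solve the linear problem
\[
-\sigma \Divo(\mlo \nabla_\omega \phi_V) = \mlo(\omega)(\omega - u[\mlo]) \cdot V, \qquad \into{\phi_V} = 0,
\]
by Lax--Milgram in the weighted space $\tilde{H}^1(\rsphere)$, exactly as in the proof of Proposition \ref{AveEqui}; solvability rests on the compatibility $\into{\mlo(\omega - u[\mlo]) \cdot V} = (u[\mlo] - u[\mlo]) \cdot V = 0$, valid by the very definition of $u[\mlo]$. The decisive point is then to verify that $\phi_V$ satisfies the \emph{self-referential} equation \eqref{Equ62}, i.e. that $W[\phi_V] = V$. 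Testing the equation for $\phi_V$ against $\omega \mapsto \Wp \cdot \omega$ and integrating by parts yields $\sigma\, W[\phi_V] \cdot \Wp = (\calmlo V) \cdot \Wp$ for every $\Wp \in \R^d$, hence $\sigma\, W[\phi_V] = \calmlo V$; since $V \in \ker(\calmlo - \sigma I_d)$ this says precisely $W[\phi_V] = V$. Thus $\phi_V$ is a genuine generalized collision invariant and $V \mapsto \phi_V$ inverts $\tpsi \mapsto W[\tpsi]$, completing the bijection.

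The main obstacle is exactly this self-consistency: equation \eqref{Equ43} is not an ordinary linear PDE, since the vector $W[\tpsi]$ on its right-hand side depends on the unknown, and decoupling it into the auxiliary problems for $\phi_V$ while recovering $W[\phi_V] = V$ through the identity $\sigma\, W[\phi_V] = \calmlo V$ is what closes the construction. Once the bijection onto $\ker(\calmlo - \sigma I_d)$ is in hand, the three cases follow from Lemma \ref{SpecProp}. When $l = 0$ and $\frac{\sigma}{r^2} \neq \frac{1}{d}$ we have $\calmlo = \frac{r^2}{d} I_d$ with $\sigma$ not an eigenvalue, so the kernel is $\{0\}$ and the only zero-mean solution is trivial (consistently, $W[\tpsi] = 0$ forces $\Divo(\nabla_\omega \tpsi) = 0$, hence $\tpsi$ constant). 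When $l = 0$ and $\frac{\sigma}{r^2} = \frac{1}{d}$ we have $\calmlo = \sigma I_d$, so the kernel is all of $\R^d$ and the solution space has dimension $d$, with basis $\{\phi_{E_i}\}_{i=1}^{d}$ solving \eqref{Equ63}. When $l > 0$, Lemma \ref{SpecProp} gives $\ker(\calmlo - \sigma I_d) = (\R\Omega)^\perp$, so the solution space has dimension $d-1$, with basis $\{\phi_{E_i}\}_{i=1}^{d-1}$ solving \eqref{Equ65}; here $E_i \cdot u[\mlo] = 0$, so the right-hand side of the $\phi_{E_i}$-problem reduces to $\mlo(\omega)(\omega \cdot E_i)$ as written in \eqref{Equ63}, \eqref{Equ65}, and linear independence of the $\phi_{E_i}$ is automatic from the injectivity of $V \mapsto \phi_V$.
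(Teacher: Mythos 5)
Your proposal is correct and takes essentially the same route as the paper's proof: the same testing identity $\sigma W[\tpsi] = \calmlo W[\tpsi]$ forcing $W[\tpsi] \in \ker(\calmlo - \sigma I_d)$, the same Lax--Milgram construction of the decoupled auxiliary problems with the same compatibility check, the same verification $\sigma W[\phi_{E_i}] = \calmlo E_i = \sigma E_i$ recovering self-consistency, and the same uniqueness-of-zero-mean-solutions argument for injectivity and spanning. The only difference is organizational: you package the three cases into a single linear bijection $\tpsi \mapsto W[\tpsi]$ before invoking Lemma \ref{SpecProp}, whereas the paper runs the identical argument case by case.
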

\begin{proof}$\;$\\
1. Let $\tpsi$ be a zero mean solution of \eqref{Equ43}. Multiplying by $(\omega \cdot \Wp)$, with $\Wp \in \R^d$, and integrating by parts over $\rsphere$ yield
\begin{align*}
\sigma W[\tpsi] \cdot \Wp & = \sigma \into{M_{0\Omega} \nabla _\omega \tpsi \cdot \Wp } = \into{M_{0\Omega}(\omega -0)\cdot W[\tpsi](\omega \cdot \Wp) } \\
& = {\mathcal M}_{0\Omega} W[\tpsi]\cdot \Wp = \frac{r^2}{d} W[\tpsi]\cdot \Wp.
\end{align*}
Therefore $\left ( \sigma - \frac{r^2}{d}\right ) W[\tpsi] = 0$, implying that $W[\tpsi] = 0$ and
\[
-\Divo (M_{0\Omega} (\omega) \nabla _\omega \tpsi ) = 0.
\]
We deduce that $\tpsi$ is a constant, zero mean function on $\rsphere$, and thus $\tpsi = 0$. \\
2. As $l=0$, then $\into{\omega M_{0\Omega}(\omega)} = u = 0$. Therefore the right hand sides in \eqref{Equ63} are zero mean functions on $\rsphere$, and by Lax-Milgram lemma, the zero mean functions $(\tpsi _i)_{1\leq i\leq d}$ are well defined. Notice that these functions also solve \eqref{Equ43}. Indeed, after multiplication by $(\omega \cdot \Wp)$, with $\Wp \in \R^d$, and integration by parts we obtain, for any $i \in \{1,...,d\}$
\[
\sigma \into{\nabla _\omega \tpsi _i \cdot \Wp M_{0\Omega}} = \into{(\omega \cdot E_i) (\omega \cdot \Wp) M_{0\Omega}}=  {\mathcal M}_{0\Omega} E_i \cdot \Wp.
\]
We deduce that
\begin{equation}
\label{Equ66}
\sigma \into{M_{0\Omega} (\omega) \nabla _\omega \tpsi _i } = {\mathcal M}_{0\Omega} E_i = \frac{r^2}{d} E_i = \sigma E_i,\;\;i \in \{1, .., d\}
\end{equation}
which eactly says that $(\tpsi_i)_{1\leq i \leq d}$ solve \eqref{Equ43}. It is easily seen that the family $(\tpsi _i)_{1\leq i \leq d}$ is linearly independent~: if $\sum_{i = 1}^d c_i \tpsi _i = 0$, then by \eqref{Equ66} one gets
\[
\sum_{i=1} ^d c_i E_i = \sum_{i=1}^d c_i \into{M_{0\Omega} (\omega) \nabla _\omega \tpsi _i} = 0
\]
implying that $c_i= 0, i \in \{1,...,d\}$. We show now that any zero mean solution $\tpsi $ for \eqref{Equ43} is a linear combination of $(\tpsi _i)_{1\leq i \leq d}$. Let $(c_i)_{1\leq i \leq d}$ be the coordinates of the vector $W[\tpsi]$ with respect to the basis $(E_i)_{1\leq i \leq d}$
\[
W[\tpsi] = \into{M_{0\Omega} (\omega) \nabla _\omega \tpsi } = \sum _{i = 1} ^d c_i E_i.
\]
We claim that $\tpsi = \sum _{i = 1} ^d c_i \tpsi _i$. Indeed, since $\tpsi$ and $\sum _{i = 1} ^d c_i \tpsi _i$ have zero mean, thanks to the uniqueness of zero mean solution, it is enough to check that $\sum _{i = 1} ^d c_i \tpsi _i$ solves \eqref{Equ62}, with the right hand side $M_{0\Omega} \omega \cdot W[\tpsi]$. Indeed, we have
\[
- \sigma \Divo \left( M_{0\Omega} \nabla _\omega \sum _{i = 1} ^d
c_i \tpsi _i\right) = \sum _{i = 1}^d c_i M_{0\Omega} (\omega
\cdot E_i) = M_{0\Omega} (\omega -0)\cdot W[\tpsi]
\]
implying that $\tpsi = \sum _{i = 1} ^d c_i \tpsi _i$.\\
3. The arguments are similar. The solutions $(\tpsi _i)_{1\leq i \leq d-1}$ in \eqref{Equ65} also solve \eqref{Equ43}, and are linearly independent. But for any solution $\tpsi$ of \eqref{Equ43}, we have for any $\Wp \in \R^d$
\begin{align*}
\sigma W[\tpsi] \cdot \Wp & = \sigma \into{\mlo{} \nabla _\omega \tpsi \cdot \Wp }= \into{\!\!\!\!\mlo (\omega - u[\mlo]) \cdot W[\tpsi] (\omega \cdot \Wp)} \\
& = \calmlo W[\tpsi] \cdot \Wp.
\end{align*}
Therefore $W[\tpsi] \in \ker ( \calmlo - \sigma I_d) = (\R \Omega)^\bot = \mathrm{span}\{E_1, ..., E_{d-1}\}$ and we deduce that $\tpsi = \sum _{i = 1} ^{d-1} c_i \tpsi _i$, with $W[\tpsi] = \sum_{i = 1} ^{d-1}c_i E _i$.
\end{proof}
We focus now on the structure of the solutions of \eqref{Equ43}. This is a consequence of the symmetry of $\mlo{}$, by rotations leaving invariant the orientation $\Omega$. We concentrate on the case $0 < \frac{\sigma}{r^2} < \frac{1}{d}, \lambda (l) = \frac{\sigma}{r^2}l, l>0$.
\begin{pro}
\label{Struct1}
For any $W \in \R^d, W \cdot \Omega = 0$, let us denote by $\tpsi _W$ the unique solution of the problem
\[
- \sigma \Divo (\mlo \nabla _\omega \tpsi _W) = \mlo \;(\omega - u)\cdot W = \mlo \; (\omega \cdot W),\;\;\into{\tpsi _W } = 0.
\]
For any orthogonal transformation $\calo$ of $\R^d$, leaving invariant the orientation $\Omega$, that is $\calo \Omega = \Omega$, we have
\[
\tpsi _W (\calo \omega) = \tpsi _{^t \calo W } (\omega),\;\;\omega \in \rsphere.
\]
\end{pro}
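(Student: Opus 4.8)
The plan is to exploit the uniqueness of the zero-mean solution together with the rotational symmetry of the weight $\mlo$. Set $\phi(\omega):=\tpsi_W(\calo\omega)$ for $\omega\in\rsphere$. I would show that $\phi$ is exactly the zero-mean solution associated with the vector ${}^t\calo W$, and then conclude $\phi=\tpsi_{{}^t\calo W}$ by the uniqueness furnished by the Lax-Milgram argument on the zero-mean subspace $\tilde{H}^1(\rsphere)$, exactly as in the proof of Proposition \ref{AveEqui}, the bilinear form $(\chi,\theta)\mapsto\sigma\into{\mlo\,\nabla_\omega\chi\cdot\nabla_\omega\theta}$ being coercive there. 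First, one must check that ${}^t\calo W$ is an admissible direction, i.e. ${}^t\calo W\cdot\Omega=0$: this is immediate since ${}^t\calo W\cdot\Omega=W\cdot\calo\Omega=W\cdot\Omega=0$.

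Next I would collect the relevant invariances. Because $\calo$ is orthogonal with $\calo\Omega=\Omega$, one has ${}^t\calo\Omega=\calo^{-1}\Omega=\Omega$, whence $\Omega\cdot(\calo\omega)=({}^t\calo\Omega)\cdot\omega=\Omega\cdot\omega$; since $\mlo$ depends on $\omega$ only through $\Omega\cdot\omega$, this yields the crucial relation $\mlo(\calo\omega)=\mlo(\omega)$. The restriction $\calo|_{\rsphere}$ is an isometry of the sphere, hence preserves the surface measure $\dom$; consequently the change of variables $\eta=\calo\omega$ leaves every spherical integral unchanged, and in particular $\into{\phi}=\into{\tpsi_W}=0$, so $\phi$ has zero mean.

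The heart of the argument is the covariance of the tangential gradient under $\calo$, namely the rule $\nabla_\omega(g\circ\calo)(\omega)={}^t\calo\,(\nabla_\omega g)(\calo\omega)$ for any smooth $g$. This follows from Lemma \ref{Extension} (statement 1) applied to the radial extension of $g\circ\calo$, together with the identity $\imoo{}^t\calo={}^t\calo\left(I_d-\frac{(\calo\omega)\otimes(\calo\omega)}{r^2}\right)$, which is just the commutation of the tangential projector with $\calo$ by orthogonality. I would then plug $\phi$ into the weak formulation: for an arbitrary test function $\chi$, set $\tilde\chi:=\chi\circ{}^t\calo$, so that $\chi=\tilde\chi\circ\calo$. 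Applying the gradient rule to both $\phi=\tpsi_W\circ\calo$ and $\chi=\tilde\chi\circ\calo$, using the orthogonality identity ${}^t\calo a\cdot{}^t\calo b=a\cdot b$ and $\mlo(\calo\omega)=\mlo(\omega)$, the integrand $\mlo\,\nabla_\omega\phi\cdot\nabla_\omega\chi$ at $\omega$ becomes $\mlo(\calo\omega)\,(\nabla_\omega\tpsi_W)(\calo\omega)\cdot(\nabla_\omega\tilde\chi)(\calo\omega)$; after the substitution $\eta=\calo\omega$ this is exactly the bilinear form tested against $\tilde\chi$ in the equation defining $\tpsi_W$, so that
\[
\sigma\into{\mlo\,\nabla_\omega\phi\cdot\nabla_\omega\chi}=\sigma\intopr{\mlo\,\nabla_{\omega'}\tpsi_W\cdot\nabla_{\omega'}\tilde\chi}=\intopr{\mlo\,(\omega'\cdot W)\,\tilde\chi(\omega')}.
\]

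Finally I would transform the right-hand side back. Undoing $\tilde\chi=\chi\circ{}^t\calo$ via the same substitution $\omega'=\calo\omega$, and using $(\calo\omega)\cdot W=\omega\cdot{}^t\calo W$ together with $\mlo(\calo\omega)=\mlo(\omega)$, the last integral equals $\into{\mlo\,(\omega\cdot{}^t\calo W)\,\chi(\omega)}$. Thus $\phi$ is a zero-mean function satisfying $-\sigma\Divo(\mlo\nabla_\omega\phi)=\mlo\,(\omega\cdot{}^t\calo W)$ in the weak sense, which is precisely the problem defining $\tpsi_{{}^t\calo W}$; uniqueness then gives $\phi=\tpsi_{{}^t\calo W}$, i.e. $\tpsi_W(\calo\omega)=\tpsi_{{}^t\calo W}(\omega)$. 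The main obstacle is the covariance rule for the tangential differential operators under $\calo$; everything else is bookkeeping with orthogonal changes of variables. I expect this step to be cleanest if the intrinsic spherical operators are reduced to the ambient operators $\nabla_v,\Divv$ through Lemma \ref{Extension}, where the commutation of $\calo$ with $\nabla_v$ and with the projector $\imoo$ is transparent, rather than computed directly in local coordinates on $\rsphere$.
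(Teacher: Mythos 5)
Your proof is correct and follows essentially the same route as the paper: both rest on the invariance $\mlo(\calo\omega)=\mlo(\omega)$ (a consequence of $\calo\Omega=\Omega$), the gradient covariance $\nabla_\omega(z\circ\calo)={}^t\calo\,(\nabla_\omega z)\circ\calo$, the measure-preserving change of variables $\eta=\calo\omega$, and uniqueness of the zero-mean solution. The only cosmetic difference is that the paper packages uniqueness through the minimization of $J_W(z)=\frac{\sigma}{2}\into{\mlo|\nabla_\omega z|^2}-\into{\mlo(\omega\cdot W)z(\omega)}$, showing $J_{^t\calo W}(z\circ\calo)=J_W(z)$ and comparing minimizers, whereas you verify the weak (Euler--Lagrange) formulation directly and invoke Lax--Milgram uniqueness --- equivalent statements for this symmetric coercive problem.
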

\begin{proof}
We know that $\tpsi _W$ is the minimum point of the functional
\[
J_W (z) = \frac{\sigma}{2}\into{\mlo |\nabla _\omega z|^2 } - \into{\mlo (\omega \cdot W) z(\omega) }
\]
on $z\in H^1(\rsphere),\into{z(\omega)} = 0$. It is easily seen that, for any orthogonal transformation ${\cal O}$ of $\R^d$, and any function $z \in H^1 (\rsphere)$, $\into{z(\omega)} = 0$, we have
\[
z_{\calo}: = z\circ \calo \in H^1(\rsphere),\;\;\into{z_{\calo}(\omega)} = 0
\]
and
\[
(\nabla _\omega z_{\calo})(\omega) = \;^t \calo(\nabla _\omega z)(\calo \omega),\;\;\omega \in \rsphere.
\]
Moreover, for any $z \in H^1(\rsphere), \into{z(\omega)} = 0$, and any orthogonal transformation leaving invariant the orientation $\Omega$ we obtain
\begin{align*}
J_{^t\calo W} (z_{\calo}) & = \frac{\sigma}{2}\into{\!\!\!\!\!\mlo(\omega) |^t \calo (\nabla _\omega z) (\calo \omega)|^2 } - \into{\!\!\!\!\!\!\!\!\mlo (\omega) ( \omega \cdot \;^t \calo W) z (\calo \omega)} \\
& = \frac{\sigma}{2}\into{\!\!\!\!\!\mlo(\calo\omega) | (\nabla _\omega z) (\calo \omega)|^2 } - \into{\!\!\!\!\!\!\mlo (\calo\omega) ( \calo \omega \cdot  W) z (\calo \omega)} \\
& = \frac{\sigma}{2}\into{\mlo(\omega) |\nabla _\omega z (\omega)|^2 } - \into{\mlo (\omega) ( \omega \cdot  W) z (\omega)} \\
& = J_W(z).
\end{align*}
Finally, one gets for any $z \in H^1(\rsphere), \into{z(\omega)} = 0$
\[
J_{^t \calo W} (\tpsi _W \circ \calo) = J_W (\tpsi _W ) \leq J_W(z \circ \;^t\calo) = J_{^t \calo W} (z)
\]
saying that $\tpsi _W \circ \calo = \tpsi _{^t \calo W}$.
\end{proof}
We claim that there is a function $\chi$ such that, for any $i \in \{1, ..., d-1\}$, the solution $\tpsi _i$ writes
\[
\tpsi _i (\omega) = \chi \left ( \Omega \cdot
\frac{\omega}{r}\right)c_i (\omega),\;\;c_i(\omega) = \frac{\omega
\cdot E_i}{\sqrt{r^2 - (\Omega \cdot \omega)^2}},\;\;\omega \in
\rsphere \setminus \{\pm r\Omega\}.
\]
\begin{lemma}
\label{InvField}
We consider the vector field $F$ given by
\[
F(\omega) = \sum _{i = 1} ^{d-1} \tpsi _i (\omega) E_i,\;\;\omega \in \rsphere.
\]
Then the vector field $F$ does not depend on the orthonormal basis
$\{E_1, ..., E_{d-1}\}$ of $(\R\Omega)^\bot$ and for any
orthogonal transformation $\calo$ of $\R^d$, preserving $\Omega$,
we have
\[
F(\calo \omega) = \calo F(\omega),\;\;\omega \in \rsphere.
\]
There is a function $\chi$ such that
\[
F(\omega)  = \chi \left ( \Omega \cdot \frac{\omega}{r}\right ) \frac{(I_d - \Omega \otimes \Omega)(\omega)}{\sqrt{r^2 - (\Omega \cdot \omega)^2}},\;\;\omega \in \rsphere \setminus \{\pm r \Omega\}
\]
and thus, for any $i \in \{1, ..., d-1\}$, we have
\[
\tpsi _i (\omega) = \chi \left ( \Omega \cdot \frac{\omega}{r}\right )\frac{\omega \cdot E_i}{\sqrt{r^2 - (\Omega \cdot \omega)^2}},\;\;\omega \in \rsphere \setminus \{\pm r \Omega\}.
\]
\end{lemma}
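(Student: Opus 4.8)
The plan is to reduce everything to the linearity of the map $W\mapsto\tpsi_W$ and to the rotational covariance already isolated in Proposition \ref{Struct1}. First I would set, for each $W\in(\R\Omega)^\bot$, the unique zero-mean solution $\tpsi_W$ of $-\sigma\Divo(\mlo\nabla_\omega\tpsi_W)=\mlo\,(\omega\cdot W)$, so that $\tpsi_i=\tpsi_{E_i}$. Since this elliptic problem is linear in its right-hand side and the right-hand side is linear in $W$, uniqueness forces $W\mapsto\tpsi_W$ to be linear; hence for each fixed $\omega$ the map $W\mapsto\tpsi_W(\omega)$ is a linear form on $(\R\Omega)^\bot$, represented by a unique vector $F(\omega)\in(\R\Omega)^\bot$ with $\tpsi_W(\omega)=F(\omega)\cdot W$. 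Because $\sum_i(F(\omega)\cdot E_i)E_i$ is just the orthogonal projection of $F(\omega)$ onto $(\R\Omega)^\bot=\mathrm{span}\{E_1,\dots,E_{d-1}\}$, which returns $F(\omega)$ itself, this identifies $\sum_i\tpsi_i(\omega)E_i$ with the basis-free vector $F(\omega)$, proving basis independence.

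For the covariance I would fix $\calo$ orthogonal with $\calo\Omega=\Omega$; then ${}^{t}\calo\Omega=\calo^{-1}\Omega=\Omega$, so both $\calo$ and ${}^{t}\calo$ preserve $(\R\Omega)^\bot$. For any $W\in(\R\Omega)^\bot$, Proposition \ref{Struct1} gives $\tpsi_W(\calo\omega)=\tpsi_{{}^{t}\calo W}(\omega)$, and using the representation above twice,
\[
F(\calo\omega)\cdot W=\tpsi_W(\calo\omega)=\tpsi_{{}^{t}\calo W}(\omega)=F(\omega)\cdot{}^{t}\calo W=(\calo F(\omega))\cdot W .
\]
Since this holds for every $W\in(\R\Omega)^\bot$ and both $F(\calo\omega)$ and $\calo F(\omega)$ lie in $(\R\Omega)^\bot$, we conclude $F(\calo\omega)=\calo F(\omega)$.

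The structural formula is the main point, and I expect it to be the only delicate step. Write $\omega_\perp=(I_d-\Omega\otimes\Omega)\omega$, the projection of $\omega$ onto $(\R\Omega)^\bot$, with $|\omega_\perp|=\sqrt{r^2-(\Omega\cdot\omega)^2}>0$ for $\omega\neq\pm r\Omega$. Fixing such an $\omega$ and letting $\calo$ run over the isotropy subgroup $\{\calo:\calo\Omega=\Omega,\ \calo\omega=\omega\}$, which for $d\geq3$ is isomorphic to $O(d-2)$ acting on the $(d-2)$-plane $(\R\Omega\oplus\R\omega_\perp)^\bot$, the covariance gives $F(\omega)=F(\calo\omega)=\calo F(\omega)$, so $F(\omega)$ is fixed by this group; the only vectors of $(\R\Omega)^\bot$ fixed by $O(d-2)$ are the multiples of $\omega_\perp$, whence $F(\omega)=a(\omega)\,\omega_\perp/|\omega_\perp|$ for a scalar $a(\omega)$. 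To see that $a$ depends only on $\Omega\cdot\omega$, I would apply the covariance once more with a general $\calo$ fixing $\Omega$: since $\calo$ commutes with $I_d-\Omega\otimes\Omega$ it maps $\omega_\perp$ to $(\calo\omega)_\perp$ with the same norm, so comparing $F(\calo\omega)=\calo F(\omega)$ forces $a(\calo\omega)=a(\omega)$; as such $\calo$ act transitively on each latitude circle $\{\Omega\cdot\omega=rc\}$, the scalar $a$ is a function of $\Omega\cdot\omega/r$ alone, and I define $\chi$ by $a(\omega)=\chi(\Omega\cdot\omega/r)$. Taking the inner product of the resulting identity with $E_i$ and using $\omega_\perp\cdot E_i=\omega\cdot E_i$ yields the stated formula for $\tpsi_i$. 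The case $d=2$ is degenerate, since $(\R\Omega)^\bot$ is a line and $F(\omega)$ is automatically a multiple of $E_1$; there the covariance under the single reflection $E_1\mapsto-E_1$ shows $\tpsi_1$ is odd in the polar angle, which is exactly what the formula encodes.

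The main obstacle is the representation-theoretic step of the last part: converting the pointwise invariance of $F(\omega)$ under the isotropy group into the conclusion that $F(\omega)$ is proportional to $\omega_\perp$, with a factor depending only on the latitude. This rests on the vanishing of $O(d-2)$-fixed vectors (hence the separate treatment of $d=2$) together with transitivity of the stabilizer of $\Omega$ on latitude circles; the elliptic analysis itself contributes nothing beyond the linearity and uniqueness already used.
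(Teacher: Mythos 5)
Your proposal is correct and follows essentially the same route as the paper: linearity and uniqueness for $W \mapsto \tpsi_W$ give basis independence, Proposition \ref{Struct1} gives the covariance $F(\calo\omega) = \calo F(\omega)$, and the structure formula comes from invariance under the isotropy group of $(\Omega,\omega)$ together with transitivity of the stabilizer of $\Omega$ on latitude circles. The only cosmetic differences are that you phrase basis independence via the Riesz vector of the linear form $W \mapsto \tpsi_W(\omega)$ and invoke the full $O(d-2)$ isotropy action, whereas the paper computes the basis change directly and uses the specific reflections $I_d - 2\,{^\bot E}\otimes{^\bot E}$, which are elements of that same group.
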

\begin{proof}
Consider any other orthonormal basis $\{F_1, ..., F_{d-1}\}$ of $(\R \Omega)^\bot$. Thanks to the identities
\[
E_1 \otimes E_1 + ...+E_{d-1} \otimes E_{d-1} + \Omega \otimes \Omega = I_d,\;\;F_1 \otimes F_1 + ...+F_{d-1} \otimes F_{d-1} + \Omega \otimes \Omega = I_d
\]
we obtain
\begin{align*}
\sum _{i = 1} ^{d-1} \tpsi _i E_i & = \sum _{i = 1} ^{d-1} \tpsi _{E_i} E_i = \sum _{i = 1} ^{d-1} \tpsi _{\sum _{j = 1} ^{d-1} (E_i \cdot F_j)F_j} E_i = \sum _{i = 1} ^{d-1} \sum _{j = 1} ^{d-1} (E_i \cdot F_j) \tpsi _{F_j} E_i \\
& = \sum _{j = 1} ^{d-1} \tpsi _{F_j} \sum _{i = 1} ^{d-1} (E_i \cdot F_j) E_i  = \sum _{j = 1} ^{d-1} \tpsi _{F_j} F_j .
\end{align*}
Pick $\calo$ any orthogonal transformation of $\R^d$, leaving invariant $\Omega$. For any $\omega \in \rsphere$, we can write, by Proposition \ref{Struct1}
\[
F(\calo \omega)  = \sum _{i = 1} ^{d-1} \tpsi _{E_i} (\calo \omega) E_i  = \sum _{i = 1} ^{d-1}\tpsi _{^t \calo E_i} (\omega) E_i = \calo \sum _{i = 1} ^{d-1}\tpsi _{^t \calo E_i } (\omega) \;^t \calo E_i = \calo F(\omega)
\]
where, in the last equality, we have used the independence of $F$
with respect to the orthonormal basis of $(\R\Omega)^\bot$. Take
now $\omega \in \rsphere \setminus \{\pm r \Omega\}$ and
\[
E = \frac{(I_d- \Omega \otimes \Omega)\omega}{\sqrt{r^2 - (\Omega
\cdot \omega)^2}}.
\]
Clearly $E \cdot \Omega = 0, |E| = 1$. \\
If $d = 2$, as we know that $F(\omega) \cdot \Omega = 0$, there is $\Lambda = \Lambda (\omega)$ such that
\[
F(\omega) = \Lambda (\omega) E = \Lambda (\omega) \frac{(I_2- \Omega \otimes \Omega)\omega}{\sqrt{r^2 - (\Omega \cdot \omega)^2}}.
\]
If $d \geq 3$, take any unitary vector $^\bot E$, orthogonal to
$E$ and $\Omega$, and consider the symmetry $ \calo  = I_d - 2
\;^\bot E \otimes ^\bot E. $ The above orthogonal transformation
leaves invariant $\Omega$, and thus, by the  hypothesis, we know
that $ F(\calo \omega ^\prime) = \calo F(\omega
^\prime),\;\;\omega ^\prime \in \rsphere. $ Observe that
\[
0 = {^\bot E} \cdot E = {^\bot E} \cdot \frac{\omega - (\omega
\cdot \Omega) \Omega}{\sqrt{r^2 - (\Omega \cdot \omega)^2}} =
\frac{^\bot E \cdot \omega}{\sqrt{r^2 - (\Omega \cdot
\omega)^2}},\;\mbox{implying that}\;\calo \omega = \omega\,,
\]
and thus
\[
F(\omega) = F(\calo \omega) = (I_d - 2 \;^\bot E \otimes \;^\bot E) F(\omega) = F(\omega) - 2(F(\omega) \cdot \;^\bot E) \;^\bot E.
\]
We deduce that $F(\omega) \cdot ^\bot\! E = 0$ for any vector
$^\bot E$, orthogonal to $E$ and $\Omega$. As $F(\omega) \cdot
\Omega = 0$, we deduce that $F(\omega)$ is orthogonal to any
vector orthogonal to $E$, anf thus there is $\Lambda = \Lambda
(\omega)$ such that
\[
F(\omega) = \Lambda (\omega) E  = \Lambda (\omega) \frac{(I_d - \Omega \otimes \Omega)\omega }{\sqrt{r^2 - (\Omega \cdot \omega)^2}},\;\;\omega \in \rsphere\setminus \{\pm r\Omega\}.
\]
We claim that $\Lambda (\omega)$ depends only on $\Omega \cdot
\frac{\omega}{r}$. Indeed, for any $d\geq 2$, and any orthogonal
transformation $\calo$, such that $\calo \Omega = \Omega$, we have
$F(\calo \omega) = \calo F(\omega)$,
\[
(I_d- \Omega \otimes \Omega) \calo \omega = \calo \omega - (\Omega
\cdot \calo \omega) \Omega = \calo \omega - (\Omega \cdot \omega)
\calo \Omega = \calo (I_d - \Omega \otimes \Omega) \omega,
\]
for all $\omega \in \rsphere\setminus \{\pm r\Omega\}$, and
\begin{align*}
\sqrt{r^2 - (\Omega \cdot \calo \omega)^2} & = |(I_d- \Omega \otimes \Omega) \calo \omega| = |\calo (I_d - \Omega \otimes \Omega) \omega|= |(I_d - \Omega \otimes \Omega) \omega|\\
& = \sqrt{r^2 - (\Omega \cdot  \omega)^2}\,,
\end{align*}
implying that $\Lambda (\calo \omega) = \Lambda (\omega), \omega \in \rsphere\setminus \{\pm r\Omega\}$. Actually, the previous equality holds true for any $\omega \in \rsphere$, since $\calo \Omega = \Omega$. We are done if we prove that $\Lambda (\omega) = \Lambda (\omega ^\prime)$ for any $\omega, \omegap \in \rsphere\setminus \{\pm r\Omega\}$ such that $\Omega \cdot \omega = \Omega \cdot \omegap, \omega \neq \omegap$. Consider the rotation $\calo $ such that
\[
\calo E = E^\prime,\;\;(\calo - I_d)|_{\mathrm{span}\{E, E^\prime \}^\bot} = 0,\;\;E = \frac{(I_d- \Omega \otimes \Omega)\omega}{\sqrt{r^2 - (\Omega \cdot \omega)^2}}, \;\;E^\prime = \frac{(I_d- \Omega \otimes \Omega)\omegap}{\sqrt{r^2 - (\Omega \cdot \omegap)^2}}.
\]
Notice that the condition $\calo E = E^\prime$ exactly says that $\calo \omega = \omegap$ and thus $\Lambda (\omegap) = \Lambda (\calo \omega) = \Lambda (\omega)$. We deduce that there is a function $\chi$ such that $\Lambda (\omega) = \chi \left ( \Omega \cdot \frac{\omega}{r}\right)$ and therefore
\[
\sum _{i = 1} ^{d-1} \tpsi _i (\omega) E_i = F(\omega) = \chi \left ( \Omega \cdot \frac{\omega}{r}\right)\frac{(I_d- \Omega \otimes \Omega)\omega}{\sqrt{r^2 - (\Omega \cdot \omega)^2}}= \sum _{i = 1} ^{d-1}\chi \left ( \Omega \cdot \frac{\omega}{r}\right)\frac{\omega \cdot E_i}{\sqrt{r^2 - (\Omega \cdot \omega)^2}}E_i
\]
implying that
\[
\tpsi _i (\omega) = \chi \left ( \Omega \cdot \frac{\omega}{r}\right)\frac{\omega \cdot E_i}{\sqrt{r^2 - (\Omega \cdot \omega)^2}},\;\;i \in \{1, ..., d-1\},\;\;\omega \in \rsphere\setminus \{\pm r\Omega\}.
\]
\end{proof}
\begin{remark}
\label{BiDim} In the case $d=2$, we take $E_1 = {^\bot \Omega},
\omega = r(\cos \theta \,\Omega + \sin \theta \;^\bot \Omega)$ and
therefore $\tpsi _1$ writes
\[
\tpsi _1 (r(\cos \theta \,\Omega + \sin \theta \;^\bot \Omega))=
\chi ( \cos \theta) \mathrm{sign} (\sin \theta),\;\;\theta \in
]-\pi, 0[ \;\cup \;]0,\pi[.
\]
Clearly, the function $\overline{\psi }_1 (\theta) := \tpsi _1
(r(\cos \theta \,\Omega + \sin \theta \;^\bot \Omega))$ is odd (in
particular $\int_{r \mathbb{S} ^1}\tpsi _1 (\omega) \dom =
\int_{-\pi}^\pi \overline{\psi }_1 (\theta) r\mathrm{d}\theta =
0$) and the condition
\[
\int_{r \mathbb{S} ^1}|\nabla _\omega \tpsi _1|^2 \mlo (\omega) \dom <+\infty
\]
implies that $\int _{-\pi} ^\pi |\partial _\theta \overline{\psi}_1 |^2 \mathrm{d}\theta <+\infty$. Therefore $\overline{\psi}_1$ is continuous on $]-\pi, \pi[$, and thus $\chi (1) = 0$. Notice that $\chi (-1) = 0$ as well, since $\lim_{\theta \nearrow \pi} \overline{\psi}_1 (\theta) = \tpsi _1 (- r \Omega) = \lim _{ \theta \searrow - \pi} \overline{\psi}_1 (\theta)$.
\end{remark}
Thanks to Lemma \ref{InvField}, in order to determine $\tpsi _i, i \in \{1, ..., d-1\}$, we only need to solve for $\chi$. The idea is to analyse the behavior of the functionals $J_{E_i}$ on the set of functions $\Psi _{i,h} (\omega) = h\left ( \Omega \cdot \frac{\omega}{r}\right)c_i (\omega),\omega \in \rsphere$. The notation $P_\omega$ stands for the orthogonal projection on the tangent space to $\rsphere$ at $\omega$, that is, $P_\omega = I_d - \frac{\omega \otimes \omega}{r^2}$.

\begin{pro}
The function $\chi$ constructed in Lemma \ref{InvField} solves the problem
\begin{equation}
\label{Prob2D} -\frac{\sigma}{r^2} \frac{\mathrm{d}}{\mathrm{d}c}
\left \{e^{lc} \chi ^{\;\prime} (c) (1-c^2)^{\frac{1}{2}} \right
\} = re^{lc},\;\;\chi (-1) = \chi (1) = 0
\end{equation}
for all $c \in ]-1,1[$, if $d=2$, and
\begin{equation}
\label{Prob3D} -\frac{\sigma}{r^2} \frac{\mathrm{d}}{\mathrm{d}c}
\left \{e^{lc} \chi ^{\prime}  (1-c^2)^{\frac{d-1}{2}}   \right \}
+ (d-2) \frac{\sigma}{r^2}e^{lc} \chi (c) (1-c^2)^{\frac{d-5}{2}}=
re^{lc}(1-c^2)^{\frac{d-2}{2}},
\end{equation}
for all $c \in ]-1,1[$, if $d\geq 3$.
\end{pro}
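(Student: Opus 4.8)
The plan is to exploit the variational characterization of the invariants together with the explicit separated form furnished by Lemma \ref{InvField}. Recall that $\tpsi _i$ is the unique zero-mean minimizer of $J_{E_i}(z) = \frac{\sigma}{2}\into{\mlo |\nabla _\omega z|^2 } - \into{\mlo (\omega \cdot E_i) z}$, and that by Lemma \ref{InvField} it has the form $\tpsi _i = \chi(\Omega\cdot\omega/r)\,c_i$ with $c_i(\omega) = (\omega\cdot E_i)/\sqrt{r^2 - (\Omega\cdot\omega)^2}$. Since $\tpsi _i$ is a global critical point of $J_{E_i}$, it is a fortiori critical within the one-parameter family $\Psi_{i,h} = h(\Omega\cdot\omega/r)\,c_i$; hence $\chi$ is a critical point of the reduced functional $h\mapsto J_{E_i}(\Psi_{i,h})$, whose Euler--Lagrange equation will be exactly \eqref{Prob2D}--\eqref{Prob3D}. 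The task thus reduces to computing $J_{E_i}(\Psi_{i,h})$ explicitly.

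To this end I would adopt spherical coordinates adapted to $\Omega$: write $\omega = r(\cos\theta\,\Omega + \sin\theta\,\eta)$ with $\theta\in[0,\pi]$, $\eta\in\mathbb{S}^{d-2}\subset(\R\Omega)^\bot$, and set $c=\cos\theta$, so that $c_i = \eta\cdot E_i$, $\mlo \propto e^{lc}$, and $\dom = r^{d-1}\sin^{d-2}\theta\,\mathrm{d}\theta\,\mathrm{d}\eta$. The gradient on $\rsphere$ splits as $|\nabla_\omega u|^2 = r^{-2}[(\partial_\theta u)^2 + \sin^{-2}\theta\,|\nabla_\eta u|^2]$, so for $u = \Psi_{i,h}$ the two contributions decouple: the polar part carries $\sin^2\theta\,h'(c)^2(\eta\cdot E_i)^2$ and the azimuthal part carries $\sin^{-2}\theta\,h(c)^2|\nabla_\eta(\eta\cdot E_i)|^2$. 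The key algebraic input is that $\eta\mapsto\eta\cdot E_i$ is a degree-one spherical harmonic on $\mathbb{S}^{d-2}$, whence $\int_{\mathbb{S}^{d-2}}|\nabla_\eta(\eta\cdot E_i)|^2\,\mathrm{d}\eta = (d-2)\int_{\mathbb{S}^{d-2}}(\eta\cdot E_i)^2\,\mathrm{d}\eta$; the common constant $\int_{\mathbb{S}^{d-2}}(\eta\cdot E_i)^2\,\mathrm{d}\eta$ then factors out of every term. This identity is precisely what produces the $(d-2)$ coefficient below.

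After integrating over $\eta$ and changing variables $c=\cos\theta$ (using $\sin^k\theta\,\mathrm{d}\theta = (1-c^2)^{(k-1)/2}\,\mathrm{d}c$), the reduced functional becomes, up to a fixed positive constant,
\[
\tilde J(h) = \frac{\sigma}{2}\int_{-1}^1 e^{lc}\Big[h'(c)^2(1-c^2)^{\frac{d-1}{2}} + (d-2)h(c)^2(1-c^2)^{\frac{d-5}{2}}\Big]\,\mathrm{d}c - r^3\int_{-1}^1 e^{lc}h(c)(1-c^2)^{\frac{d-2}{2}}\,\mathrm{d}c.
\]
Taking the first variation against test functions $g$ compactly supported in $]-1,1[$ and integrating the $h'g'$ term by parts yields, after dividing by $r^2$, the interior equation
\[
-\frac{\sigma}{r^2}\frac{\mathrm{d}}{\mathrm{d}c}\Big\{e^{lc}\chi'(c)(1-c^2)^{\frac{d-1}{2}}\Big\} + (d-2)\frac{\sigma}{r^2}e^{lc}\chi(c)(1-c^2)^{\frac{d-5}{2}} = re^{lc}(1-c^2)^{\frac{d-2}{2}},
\]
which is \eqref{Prob3D}; for $d=2$ the zeroth-order term disappears and $(1-c^2)^{(d-2)/2}=1$, giving \eqref{Prob2D}. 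The boundary conditions $\chi(\pm1)=0$ come from the requirement that $\tpsi_i$ be single-valued and of finite energy at the poles $\omega=\pm r\Omega$, where the azimuthal coordinate degenerates and $\eta\cdot E_i$ ranges over all of $[-1,1]$, exactly as argued for $d=2$ in Remark \ref{BiDim}.

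The step I expect to be most delicate is the passage from the interior Euler--Lagrange equation to the stated boundary-value problem, that is, the control of the behavior at $c=\pm1$. There the degeneracy of the weights $(1-c^2)^{(d-1)/2}$ and $(1-c^2)^{(d-5)/2}$, the finiteness of the weighted Dirichlet energy, and the continuity of $\tpsi_i$ across the poles must be reconciled in order to both legitimize the integration by parts (vanishing of the boundary terms $[\,\sigma e^{lc}\chi'(1-c^2)^{(d-1)/2}g\,]_{-1}^1$) and justify the endpoint conditions $\chi(\pm1)=0$. The remaining ingredients, namely the coordinate splitting of the gradient and the azimuthal eigenvalue identity, are routine and merely bookkeeping.
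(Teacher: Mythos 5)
Your derivation is correct and rests on the same variational skeleton as the paper's proof: restrict the minimization of $J_{E_i}$ to the separated family $\Psi_{i,h}=h\left(\Omega\cdot\frac{\omega}{r}\right)c_i(\omega)$ (legitimate since $\tpsi_i=\Psi_{i,\chi}$ lies in this affine family and minimizes globally), reduce to a one-dimensional weighted functional, and read off \eqref{Prob2D}--\eqref{Prob3D} as its Euler--Lagrange equation; your reduced functional $\tilde J$ is exactly $r^2$ times the paper's $J(h)$, so the ratios of coefficients agree and the resulting ODE is the same. Where you genuinely diverge is in the angular bookkeeping: you pass to warped-product coordinates $\omega=r(\cos\theta\,\Omega+\sin\theta\,\eta)$, $\eta\in\mathbb{S}^{d-2}$, and obtain the $(d-2)$ coefficient from the eigenvalue identity for degree-one spherical harmonics on $\mathbb{S}^{d-2}$, whereas the paper never introduces these coordinates: it computes $|\nabla_\omega\Psi_{i,h}|^2$ intrinsically via the projection $P_\omega$ (checking that the $P_\omega\Omega$ and $\nabla_\omega c_i$ contributions are orthogonal), then uses rotational invariance to argue that $\into{|\nabla_\omega\Psi_{i,h}|^2\mlo}$ is independent of $i$, and sums over $i$ using $\sum_i(\omega\cdot E_i)^2=r^2-(\omega\cdot\Omega)^2$, producing the factors $\frac{1}{d-1}$ and $\frac{d-2}{d-1}$. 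Your route is arguably more transparent about where $(d-2)$ comes from; the paper's avoids any coordinate degeneracy at the poles. Concerning the step you flag as delicate: the paper does not perform an interior Euler--Lagrange computation against compactly supported test functions followed by a boundary analysis at $c=\pm1$. Instead it poses the reduced minimization directly in the weighted spaces $H_2$ and $H_d$ and invokes Lax--Milgram, so the weak formulation holds against all admissible test functions at once and the endpoint behavior is encoded in the function space rather than argued separately. This also explains a small overclaim in your write-up: the condition $\chi(-1)=\chi(1)=0$ is imposed (and proved, via Remark \ref{BiDim}) only for $d=2$, where it is built into $H_2$; for $d\geq 3$ the statement \eqref{Prob3D} carries no boundary conditions, the membership $(1-c^2)^{\frac{d-5}{4}}\chi\in L^2(]-1,1[)$ playing their role. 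So your argument is sound, but to close it along your lines you would either have to justify the vanishing of the boundary terms in the integration by parts for each $d$, or, as the paper does, sidestep the issue by working in $H_d$ from the start.
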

\begin{proof}
For any $i \in \{1,...,d-1\}$, the gradient of $\Psi _{i,h}$ writes
\[
\nabla _\omega \Psi _{i,h} = h^\prime \left ( \Omega \cdot \frac{\omega}{r}\right)c_i (\omega) \frac{P_\omega \Omega}{r} + h\left ( \Omega \cdot \frac{\omega}{r}\right)\nabla _\omega c_i
\]
where
\begin{align*}
\nabla _\omega c_i & = \frac{P_\omega E_i}{\sqrt{r^2 - (\omega \cdot \Omega)^2}} + \frac{(\omega \cdot E_i) (\omega \cdot \Omega)}{(r^2 - (\omega \cdot \Omega)^2)^{3/2}}P_\omega \Omega.
\end{align*}
Therefore we obtain
\begin{align*}
\nabla _\omega \psi _{i,h} = &\, h^\prime \left ( \Omega \cdot \frac{\omega}{r}\right)\frac{\omega \cdot E_i}{\sqrt{r^2 - (\omega \cdot \Omega)^2}} \frac{P_\omega \Omega}{r} \\
& + \frac{h \left ( \Omega \cdot \frac{\omega}{r}\right)}{\sqrt{r^2 - (\omega \cdot \Omega)^2}}\left [  P_\omega E_i + \frac{(\omega \cdot E_i) (\omega \cdot \Omega)}{r^2 - (\omega \cdot \Omega)^2}P_\omega \Omega \right ].
\end{align*}
Notice that $P_\omega \Omega$ and $\nabla _\omega c_i$ are orthogonal, thanks to the equality $|P_\omega \Omega |^2 = 1 - \frac{(\omega\cdot \Omega)^2}{r^2}$. Indeed, we have
\[
P_\omega \Omega \cdot \left [P_\omega E_i + \frac{(\omega \cdot E_i) (\omega \cdot \Omega)}{r^2 - (\omega \cdot \Omega)^2}P_\omega \Omega  \right ]= - \frac{(\omega \cdot E_i) (\omega \cdot \Omega)}{r^2} + \frac{(\omega \cdot E_i) (\omega \cdot \Omega)}{r^2 - (\omega \cdot \Omega)^2}|P_\omega \Omega|^2 = 0.
\]
Observe also that
\begin{align*}
|\nabla _\omega c_i|^2 & = \frac{1}{r^2 - (\omega \cdot \Omega)^2} \left [ 1 - \frac{(\omega \cdot E_i)^2}{r^2 - (\omega \cdot \Omega)^2}\right]
\end{align*}
implying that
\begin{align*}
|\nabla _\omega \Psi _{i,h} |^2 & = \left (h^\prime \left ( \Omega \cdot \frac{\omega}{r}\right)c_i (\omega) \right )^2\frac{|P_\omega \Omega |^2}{r^2} + \left (h \left ( \Omega \cdot \frac{\omega}{r}\right)\right ) ^2 |\nabla _\omega c_i |^2\\
& = \frac{\left (h^\prime \left ( \Omega \cdot \frac{\omega}{r}\right) \right )^2(\omega \cdot E_i)^2}{r^4} + \frac{\left (h \left ( \Omega \cdot \frac{\omega}{r}\right)\right ) ^2}{r^2 - (\omega \cdot \Omega)^2}\left [ 1 - \frac{(\omega \cdot E_i)^2}{r^2 - (\omega \cdot \Omega)^2}\right].
\end{align*}
Performing orthogonal changes of coordinates, which preserve
$\Omega$, we deduce that the integrals $\into{|\nabla _\omega \Psi
_{i,h}|^2 \mlo}$ do not depend on $i \in \{1, ..., d-1\}$, and
thus
\begin{align}
\label{EquFunct1}
\into{|\nabla _\omega \Psi _{i,h}|^2 \mlo} = &\, \frac{1}{d-1} \into{\frac{\left (h^\prime \left ( \Omega \cdot \frac{\omega}{r}\right) \right )^2}{r^4}[r^2 - (\omega \cdot \Omega)^2]\mlo} \\
& + \frac{d-2}{d-1} \into{\frac{\left (h \left ( \Omega \cdot
\frac{\omega}{r}\right) \right )^2}{r^2 - (\omega \cdot \Omega)^2}
\mlo}.\nonumber
\end{align}
We also need to compute the linear part of the functional $J_{E_i}$
\begin{align}
\label{EquFunct2} \into{\!\!\!\!\mlo \;(\omega \cdot E_i) h \left
(\Omega \cdot \frac{\omega}{r} \right )c_i (\omega)} & =
\into{\!\!\!\!\!\mlo\frac{h \left (\Omega \cdot \frac{\omega}{r}
\right )}{d-1} \sqrt{r^2 - (\omega \cdot \Omega)^2}}.
\end{align}
The expression of $J_{E_i} ( \psi _{i,h})$ follows by \eqref{EquFunct1}, \eqref{EquFunct2}
\begin{align*}
J_{E_i} (\psi _{i,h}) = &\, \frac{\sigma}{2(d-1)} \into{\!\!\!\mlo \left (  h ^\prime \left (\Omega \cdot \frac{\omega}{r} \right ) \right)^2 \frac{r^2 - (\Omega \cdot \omega)^2}{r^4}}\\
& + \frac{\sigma}{2} \frac{d-2}{d-1} \into{\!\!\!\mlo \frac{\left (  h  \left (\Omega \cdot \frac{\omega}{r} \right ) \right)^2}{r^2 - (\omega \cdot \Omega) ^2}}\\
& - \frac{1}{d-1} \into{\mlo h  \left (\Omega \cdot \frac{\omega}{r} \right )\sqrt{r^2 - (\omega \cdot \Omega)^2}} \\
= &\,\frac{\sigma}{2(d-1)r^2} \frac{\intth{e^{l \cos \theta} (h^\prime (\cos \theta))^2 \sin ^d \theta}}{\intth{e^{l \cos \theta }\sin ^{d-2}\theta}} \\
& + \frac{\sigma}{2} \frac{d-2}{d-1}\frac{\intth{e^{l\cos \theta} \left ( \frac{h(\cos \theta)}{r \sin \theta}\right ) ^2\sin ^{d-2} \theta}}{\intth{e^{l \cos \theta }\sin ^{d-2}\theta}} \\
& - \frac{1}{d-1} \frac{\intth{e^{l \cos \theta} h(\cos \theta) r \sin \theta \sin ^{d-2} \theta}}{\intth{e^{l \cos \theta }\sin ^{d-2}\theta}} \\
= &\, \frac{J(h)}{(d-1) \pi \beta _0 (l)}
\end{align*}
where $\pi \beta _0 (l) = \intth{e^{l \cos \theta} \sin ^{d-2}\theta }$ and
\begin{align*}
J(h) = &\,\frac{\sigma}{2r^2} \int _{-1} ^1 e^{lc} (h^\prime (c))^2 ( 1 - c^2 ) ^{\frac{d-1}{2}}\;\mathrm{d}c   + \frac{\sigma}{2}\frac{d-2}{r^2} \int _{-1} ^1 e^{lc} (h(c))^2 ( 1 - c^2 ) ^{\frac{d-5}{2}}\;\mathrm{d}c \\
& - r \int _{-1} ^1 e ^{lc} h(c) ( 1 - c^2 )
^{\frac{d-2}{2}}\;\mathrm{d}c.
\end{align*}
We consider the Hilbert spaces
\[
H_2 = \{ h :]-1, 1[ \to \R,\;\;(1 - c^2)^{1/4} h^\prime \in L^2 (]-1,1[),\;\;h(-1) = h(1) = 0\}
\]
and
\[
H_d = \{h : ]-1,1[ \to \R,\;(1 - c^2) ^{\frac{d-1}{4}} h^\prime
\in L^2(]-1,1[),\;\;(1-c^2) ^{\frac{d-5}{4}} h \in L^2 (]-1,1[)\},
\]
for $d \geq 3$, endowed with the scalar products
\[
(g,h)_2 = \int _{-1} ^1 g^\prime (c) h^\prime (c) \sqrt{1-c^2} \;\mathrm{d}c,\;\;g, h \in H_2
\]
and
\[
(g,h)_d = \int _{-1} ^1 \!\!\!g^\prime (c) h^\prime (c) (1-c^2)^{\frac{d-1}{2}} \;\mathrm{d}c + \int _{-1} ^1 \!\!\!g (c) h(c) (1-c^2)^{\frac{d-5}{2}} \;\mathrm{d}c,\;\;g, h \in H_d,\;\; \mbox{if } d \geq 3.
\]
By Lemma \ref{InvField}, there is a function $\chi$ such that $\tpsi _i = \chi \left ( \Omega \cdot \frac{\omega}{r}\right)c_i (\omega), i \in \{1,...,d-1\}$. We know that $\tpsi _i, i \in \{1, ..., d-1\}$, minimize the functionals $J_{E_i}(z)$, with $z \in H^1(\rsphere)$, $\into{z(\omega)} = 0$. In particular, for any $h \in H_d, d \geq 2$, we have
\[
J_{E_i} (\Psi _{i,h}) \geq J_{E_i} (\tpsi _i ),\;\;\Psi _{i,h}(\omega) = h \left ( \Omega \cdot \frac{\omega}{r}\right )c_i (\omega)
\]
implying that $\chi$, which belongs to $H_d$, is the solution of the minimization problem
\[
J(h) \geq J(\chi),\;\;h \in H_d.
\]
Thanks to the Lax-Milgram lemma, we deduce that $\chi$ is the solution of the problem \eqref{Prob2D} if $d = 2$, and \eqref{Prob3D} if $d\geq 3$.
\end{proof}
Up to now, for a given equilibrium $F = \mlo \;\dom$, we have determined the functions $\psi$ such that
\[
\intv{\psi (v) \lime\frac{\ave{Q(F+\eps \Go)} - \ave{Q(F)}}{\eps}} = 0
\]
for any bounded measure $\Go$, supported in $\rsphere$. But we need to control the linearization of $\ave{Q}$ around the equilibrium $F$ in the direction $\Fo$, which is not necessarily supported in $\rsphere$. It happens that the constraint $\Divv\{\Fo \abv\} = Q(F)$, see \eqref{Equ9}, will guarantee that
\[
\intv{\psi (v) \lime\frac{\ave{Q(F+\eps \Fo)} - \ave{Q(F)}}{\eps}} = \intv{\psi (v) \ave{\Divv A_F (\Fo)} } = 0.
\]
These computations are a little bit tedious and can be found in
\ref{C}. 
\begin{pro}
\label{ZeroAvePart}
Let $F = \mlo \dom$ be a von Mises-Fisher distribution with $l>0$, and $\Fo$ be a bounded measure (not charging a small neighborhood of $0$, for simplifying), satisfying $\Divv\{\Fo \abv \} = Q(F)$. Then the linearized of $\ave{Q}$ around $F$ in the direction $\Fo$ verifies
\[
\intv{\tpsi (v)\ave{\Divv A_F (\Fo)}\!\!} = 0,\mbox{ for any
generalized collision invariant } \tpsi \mbox{ of } \ave{Q}.
\]
\end{pro}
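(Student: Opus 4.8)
The plan is to test the averaged linearized kernel against the canonical extension $\psi(v):=\tpsi\left(r\vsv\right)$ of the generalized collision invariant, to reduce the whole pairing to the integral against $\Fo$ of a function that vanishes on $\rsphere$, and then to evaluate that integral through the normal-derivative representation already available in Lemma \ref{FOneF}. Since $\Fo$ does not charge a neighbourhood of the origin and $\psi$ is bounded and smooth on $\R^d\setminus\{0\}$, neglecting the mass at the origin in the definition of $\ave{\cdot}$ gives $\intv{\tpsi(v)\ave{\Divv A_F(\Fo)}}=\int_{v\neq 0}\psi(v)\,\Divv A_F(\Fo)\,\dv$. I would then integrate by parts against $A_F(\Fo)=(v-u)\Fo+\sigma\nabla_v\Fo-F\,\intvp{\Fo(v^\prime-u)}/\intvp{F}$, with $u:=u[\mlo]$; using $\int_{v\neq 0}\nabla_v\psi\,F\,\dv=\into{\nabla_\omega\tpsi\,\mlo}=W[\tpsi]$ and $\intvp{F}=1$, this produces
\[
\intv{\tpsi(v)\ave{\Divv A_F(\Fo)}}=\int_{v\neq 0}L\psi(v)\,\Fo\,\dv+\left(\int_{v^\prime\neq 0}(v^\prime-u)\,\Fo\,\md v^\prime\right)\cdot W[\tpsi],
\]
where $L\psi(v):=-(v-u)\cdot\nabla_v\psi+\sigma\Delta_v\psi$.

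Next I would separate from $L\psi$ its affine part and a remainder vanishing on the sphere. Put $G(v):=-(v-u)\cdot W[\tpsi]$ and $R:=L\psi-G$. On $\rsphere$ one has $\nabla_v\psi=\nabla_\omega\tpsi$ and $\Delta_v\psi=\Delta_\omega\tpsi$, so the collision invariant equation \eqref{Equ43} gives exactly $L\psi=-(\omega-u)\cdot W[\tpsi]=G$ on $\rsphere$, i.e. $R|_{\rsphere}=0$. Since $\int_{v\neq 0}G\,\Fo\,\dv=-\left(\int_{v\neq 0}(v-u)\,\Fo\,\dv\right)\cdot W[\tpsi]$, the second term in the previous display cancels exactly, leaving the clean reduction
\[
\intv{\tpsi(v)\ave{\Divv A_F(\Fo)}}=\int_{v\neq 0}R(v)\,\Fo\,\dv.
\]
Because $R$ vanishes on $\rsphere$, only the transversal part $\Fo-\ave{\Fo}$ contributes here, which is precisely the object controlled by Lemma \ref{FOneF}.

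I would then feed $R$ into the machinery of Lemma \ref{FOneF}: solving the adjoint problem $-\abv\cdot\nabla_v\varphi_R=R$ (Lemma \ref{AdjointProblem}) yields the normal derivative $(\nabla_v\varphi_R)(\omega_t)\cdot\tfrac{\omega_t}{t}=R(\omega_t)/[t\beta(t^2-r^2)]$, and the constraint $\Divv\{\Fo\abv\}=Q(F)$ turns $\int_{v\neq 0}R\,\Fo\,\dv$ into $(Q(F),\varphi_R)$. Inserting the explicit expression of $(Q(F),\varphi)$ for the von Mises--Fisher equilibrium gives
\[
\intv{\tpsi(v)\ave{\Divv A_F(\Fo)}}=\sigma\,\frac{\mlo}{M}\,\frac{\md}{\md t}_{|_{t=r}}\int_{|\omega_t|=t}\frac{M(\omega_t)\,R(\omega_t)}{t\beta(t^2-r^2)}\,\md\omega_t,
\]
with $M(v)=\exp(-|v-u|^2/(2\sigma))$, so that the claim amounts to the vanishing of this boundary term.

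The main obstacle is exactly this last step. To carry it out I would first make $R$ explicit off the sphere: by Lemma \ref{Extension}, the decomposition of $\Delta_v$ in polar coordinates, and the degree-zero homogeneity of $\psi$ (so that $v\cdot\nabla_v\psi=0$), one has $(\nabla_v\psi)(t\theta)=\tfrac{r}{t}(\nabla_\omega\tpsi)(r\theta)$ and $(\Delta_v\psi)(t\theta)=\tfrac{r^2}{t^2}(\Delta_\omega\tpsi)(r\theta)$, hence $L\psi(t\theta)=\tfrac{r}{t}\,u\cdot(\nabla_\omega\tpsi)(r\theta)+\tfrac{\sigma r^2}{t^2}(\Delta_\omega\tpsi)(r\theta)$ and $R(t\theta)=L\psi(t\theta)+(t\theta-u)\cdot W[\tpsi]$. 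Rewriting the boundary integral over the unit sphere as $N(t)=\tfrac{t^{d-2}}{\beta(t^2-r^2)}\int_{\sphere}M(t\theta)R(t\theta)\,\md\theta$, with $\int_{\sphere}M(r\theta)R(r\theta)\,\md\theta=0$ since $R|_{\rsphere}=0$, the simple zero at $t=r$ cancels one factor of $(t-r)$, and the identity $N^\prime(r)=0$ reduces to a relation between the first and second normal derivatives of $R$ at $t=r$. This relation is obtained by differentiating once more and invoking \eqref{Equ43} again, after integrating the $\Delta_\omega\tpsi$ contribution by parts against $M\propto\mlo$ on $\rsphere$. These manipulations are elementary but lengthy, which is why they are deferred to \ref{C}.
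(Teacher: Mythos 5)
Up to the boundary-term representation, your argument is the paper's own proof: your $R$ is exactly the function $\chi$ introduced in \ref{C}, the cancellation of the cross term against $W[\tpsi]$, the vanishing of $R$ on $\rsphere$ via \eqref{Equ43}, the appeal to Lemma \ref{FOneF}, and the explicit homogeneity formulas $(\nabla_v\psi)(t\theta)=\frac{r}{t}(\nabla_\omega\tpsi)(r\theta)$, $(\Delta_v\psi)(t\theta)=\frac{r^2}{t^2}(\Delta_\omega\tpsi)(r\theta)$ from Lemma \ref{Extension} all coincide with the published computation.

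The gap is the last step, which you label ``elementary but lengthy'' and defer to \ref{C} --- but that step \emph{is} the substance of the proof, and the mechanism you sketch for it would not close on its own. After dividing out the simple zero of $R$ at $t=r$, the integrand becomes, term by term,
\[
\frac{M\left(t\frac{\omega}{r}\right)}{rt(t+r)}\left[\omega\cdot W[\tpsi]+u[F]\cdot\nabla_\omega\tpsi\right]-\frac{\sigma}{t^3}\,\Divo\!\left(M\left(t\frac{\omega}{r}\right)\nabla_\omega\tpsi\right),
\]
and only the divergence integrates away for free. Every other cancellation hinges on the orthogonality $W[\tpsi]\in(\R\Omega)^\bot$, a consequence of the spectral analysis of $\calmlo$ (Lemma \ref{SpecProp} and Proposition \ref{CollInvBis}), which your sketch never invokes: it kills the $\omega\cdot W[\tpsi]$ term because $\into{M\left(t\frac{\omega}{r}\right)\omega}\in\R\Omega$, and it is used twice more, in the form $W[\tpsi]\cdot u[F]=0$, when the derivative at $t=r$ is expanded by the product rule. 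Even then a genuinely nonzero-looking residue survives,
\[
\frac{1}{2r^4\beta}\into{\mlo\,(\nabla_\omega\tpsi\cdot u[F])\,(\omega\cdot u[F])},
\]
and its vanishing is \emph{not} a ``relation between the first and second normal derivatives of $R$'' extractable by ``invoking \eqref{Equ43} again'': the paper obtains it by testing the elliptic equation \eqref{Equ62} against the specific quadratic function $\omega\mapsto(\omega\cdot u[F])^2$, integrating by parts, and using once more that $\into{\mlo(\omega\cdot u[F])^2\,\omega}\in\R\Omega$ while $W[\tpsi]\bot\,\Omega$. Without the orthogonality input and this quadratic test-function identity, your plan of ``differentiating once more'' does not terminate; this is precisely the missing content of your proposal.
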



\section{The limit model}
\label{LimMod}

We identify the model satisfied by the limit distribution $f =
\lime \fe$. We already know that $f$ is a von Mises-Fisher
distribution $f = \rho (t,x) M_{l \Omega (t,x)}(\omega) \dom$ with
$\rho \geq 0, \Omega \in \sphere, l\geq 0, \lambda (l) =
\frac{\sigma}{r^2}l$. If $\frac{\sigma}{r^2} \geq \frac{1}{d}$,
then $l = 0$ and $\mlo \dom$ reduces to the isotropic measure on
$\rsphere$, that is $f = \rho (t,x) \frac{\dom}{r^{d-1} \bar\omega
_d}$, with zero mean velocity $u[f] = \into{\omega \rho \mlo } =
0$. In this case, the continuity equation reduces to the trivial
limit model $\partial _t \rho = 0, t \in \R_+$. From now on, we
assume that $\frac{\sigma}{r^2} \in ]0,\frac{1}{d}[$, and we
consider $l>0$ the unique solution for $\lambda (l) =
\frac{\sigma}{r^2}l$ cf. Proposition \ref{IsotropicEquilibrium}.
We are ready to justify the main result in Theorem
\ref{MainResult1} and the derivation of the SOH model
\eqref{Equ73}-\eqref{Equ74}.

\begin{proof} (of Theorem \ref{MainResult1})\\
The continuity equation \eqref{Equ73} comes from the continuity equation of \eqref{Equ71}
\[
\partial _t \intv{f} + \Divx \intv{fv} = \lime\left \{\partial _t \intv{\fe} + \Divx \intv{\fe v}\right \} = 0
\]
and the formula for the mean velocity of a von Mises-Fisher equilibrium
\[
u[f] = \into{\omega \rho \mlo} = \rho \frac{l\sigma}{r} \Omega = \rho \lambda (l) r \Omega.
\]
Equivalently, \eqref{Equ73} is obtained by using the collision
invariant $\tpsi = 1$. The equation \eqref{Equ74} will follow, by
using the $(d-1)$ dimensional linear space of collision invariants
studied in Proposition \ref{CollInvBis}. Revisiting the expansion
\eqref{FormalExpansion}, we obtain
\begin{equation}
\label{Equ75}
\partial _t f + \Divx(fv) + \Divv\{\ftw \abv\} = \Divv(A_f (\fo))
\end{equation}
together with the constraints
\begin{equation}
\label{Equ76}
\Divv\{f\abv\} = 0
\end{equation}
\begin{equation}
\label{Equ77}
\Divv\{\fo \abv\} = Q(f).
\end{equation}
The first constraint \eqref{Equ76} says that, for any $(t,x) \in \R_+ \times \R^d$, $\supp f(t,x) \subset \A$. Averaging the second constraint \eqref{Equ77} leads to
\[
\ave{Q(f)} = \ave{\Divv\{\fo \abv \}} = 0
\]
and thus $f(t,x) = \rho (t,x) M_{l\Omega (t,x)}(\omega) \dom,\omega \in \rsphere$. Averaging \eqref{Equ75} allows us to get rid of $\ftw$
\begin{equation}
\label{Equ78} \partial _t \ave{f} + \Divx \ave{vf} = \ave{\Divv
A_f (\fo)}.
\end{equation}
In order to eliminate $\fo$ as well, we test \eqref{Equ78} against
the functions $\psi _i (v) = \tpsi _i \left ( r \vsv\right)$,
where $(\tpsi _i)_{1\leq i \leq d-1}$ are the collision invariants
constructed in Proposition \ref{CollInvBis}. Indeed, by
Proposition \ref{ZeroAvePart}, we know that for any $i \in \{1,
..., d-1\}$
\[
\int_{v \neq 0} \tpsi_i \left ( r \vsv \right ) \ave{\Divv A_f
(\fo)} \;\dv = \int_{v \neq 0} \tpsi_i \left ( r \vsv \right )
\Divv A_f (\fo) \;\dv = I[\tpsi _i] = 0
\]
and therefore
\begin{equation}
\label{Equ79} \into{\partial _t (\rho \mlo ) \tpsi _i } + \into{\Divx (\rho \mlo \omega)\tpsi _i (\omega) } = 0,\;\;i \in \{1, ..., d-1\}.
\end{equation}
Let $\{E_1, ..., E_{d-1}, \Omega\}$ be a orthonormal basis and $\tpsi_1, ..., \tpsi _{d-1}$ be the solutions of the problems \eqref{Equ65}. We recall that
\[
\sum _{i =1 }^{d-1} \tpsi _i E_i = F(\omega) = \chi \left ( \Omega \cdot \frac{\omega}{r}\right) \frac{(I_d - \Omega \otimes \Omega) \frac{\omega}{r}}{\sqrt{1 - \left ( \Omega \cdot \frac{\omega}{r} \right )^2 } }.
\]
The equation \eqref{Equ79}, written for $i \in \{1, ..., d-1\}$, says that
\[
(I_d - \Omega \otimes \Omega) \into{[\partial _t (\rho \mlo) + \Divx (\rho \mlo \omega)]\frac{\chi \left ( \frac{\omega}{r} \cdot \Omega \right)}{\sqrt{1 - \left ( \Omega \cdot \frac{\omega}{r} \right )^2 }}\frac{\omega}{r}} = 0.
\]
We need to compute the vectors
\[
U_1 = \into{\partial _t \rho \mlo (\omega) \frac{\chi \left ( \frac{\omega}{r} \cdot \Omega \right)}{\sqrt{1 - \left ( \Omega \cdot \frac{\omega}{r} \right )^2 }}\frac{\omega}{r}}
\]
\[
U_2 = \into{\rho \mlo (\omega) \;l \; \partial _t \Omega \cdot \frac{\omega}{r} \frac{\chi \left ( \frac{\omega}{r} \cdot \Omega \right)}{\sqrt{1 - \left ( \Omega \cdot \frac{\omega}{r} \right )^2 }}\frac{\omega}{r}}
\]
\[
U_3 = \into{\omega \cdot \nabla _x \rho \mlo (\omega) \frac{\chi \left ( \frac{\omega}{r} \cdot \Omega \right)}{\sqrt{1 - \left ( \Omega \cdot \frac{\omega}{r} \right )^2 }}\frac{\omega}{r}}
\]
\[
U_4 = \into{l\rho \omega \cdot \;^t \partial _x \Omega \frac{\omega}{r} \;\mlo (\omega) \frac{\chi \left ( \frac{\omega}{r} \cdot \Omega \right)}{\sqrt{1 - \left ( \Omega \cdot \frac{\omega}{r} \right )^2 }}\frac{\omega}{r}}
\]
and to impose
\begin{equation}
\label{Equ85} \sum _{i = 1} ^4 (I_d - \Omega \otimes \Omega)U_i = 0.
\end{equation}
Clearly, the first vector $U_1$ is parallel to $\Omega$, and thus
\begin{equation}
\label{Equ86}
(I_d - \Omega \otimes \Omega)U_1 = 0.
\end{equation}
The treatment of the second and third vectors requires to compute
\begin{align*}
{\mathcal A} := &\, \into{\frac{\omega}{r} \otimes \frac{\omega}{r} \mlo (\omega) \frac{\chi \left ( \frac{\omega}{r} \cdot \Omega \right)}{\sqrt{1 - \left ( \Omega \cdot \frac{\omega}{r} \right )^2 }}} \\
= &\,\sum _{i = 1}^{d-1} \into{\frac{(\omega \cdot E_i)^2}{r^2} \mlo (\omega) \frac{\chi \left ( \frac{\omega}{r} \cdot \Omega \right)}{\sqrt{1 - \left ( \Omega \cdot \frac{\omega}{r} \right )^2 }}} \;E_i \otimes E_i \\
& + \into{\frac{(\omega \cdot \Omega)^2}{r^2} \mlo (\omega) \frac{\chi \left ( \frac{\omega}{r} \cdot \Omega \right)}{\sqrt{1 - \left ( \Omega \cdot \frac{\omega}{r} \right )^2 }}}\;\Omega \otimes \Omega \\
= &\,\frac{1}{d-1} \into{\left [1 - \left ( \Omega \cdot \frac{\omega}{r}\right)^2   \right]\mlo (\omega) \frac{\chi \left ( \frac{\omega}{r} \cdot \Omega \right)}{\sqrt{1 - \left ( \Omega \cdot \frac{\omega}{r} \right )^2 }}}\sum_{i = 1}^{d-1}E_i \otimes E_i \\
& + \into{\frac{(\omega \cdot \Omega)^2}{r^2} \mlo (\omega)  \frac{\chi \left ( \frac{\omega}{r} \cdot \Omega \right)}{\sqrt{1 - \left ( \Omega \cdot \frac{\omega}{r} \right )^2 }}}\;\Omega \otimes \Omega \\
= &\,\frac{\intth{\sin ^2 \theta e^{l\cos \theta} \;\frac{\chi (\cos \theta)}{\sin \theta} \sin ^{d-2}\theta}}{\intth{e^{l\cos \theta}\sin ^{d-2} \theta}}\;\frac{I_d - \Omega \otimes \Omega}{d-1} \\
& + \frac{\intth{\cos ^2 \theta e^{l\cos \theta} \;\frac{\chi
(\cos \theta)}{\sin \theta} \sin ^{d-2}\theta}}{\intth{e^{l\cos
\theta}\sin ^{d-2} \theta}}\Omega \otimes \Omega.
\end{align*}
We obtain, thanks to the identity $\partial _t \Omega \cdot \Omega = \frac{1}{2}\partial _t |\Omega |^2 = 0$
\begin{equation}
\label{Equ87}
(I_d - \Omega \otimes \Omega) U_2 = (I_d - \Omega \otimes \Omega) \rho l {\mathcal A} \partial _t \Omega =\frac{ \rho l}{d-1}\frac{\intth{ e^{l\cos \theta} \chi (\cos \theta)\sin ^{d-1}\theta}}{\intth{e^{l\cos \theta}\sin ^{d-2} \theta}}\partial _t \Omega
\end{equation}
and
\begin{align}
\label{Equ88}
(I_d - \Omega \otimes \Omega) U_3  & = r (I_d - \Omega \otimes \Omega) {\mathcal A}\nabla _x \rho \nonumber \\
& = \frac{r}{d-1} \frac{\intth{e^{l\cos \theta} \chi (\cos \theta)\sin ^{d-1}\theta}}{\intth{e^{l\cos \theta}\sin ^{d-2} \theta}}(I_d - \Omega \otimes \Omega) \nabla _x \rho.
\end{align}
We concentrate now on the last vector $U_4$. Observe that
\[
(I_d - \Omega \otimes \Omega) U_4 = r\rho l \into{\frac{\omega}{r} \otimes \frac{\omega}{r} : \partial _x \Omega \;\mlo (\omega) \frac{\chi \left ( \frac{\omega}{r} \cdot \Omega \right)}{\sqrt{1 - \left ( \Omega \cdot \frac{\omega}{r} \right )^2 }}\sum _{i = 1} ^{d-1} \left ( E_i \cdot \frac{\omega}{r} \right)E_i}
\]
and for any $i \in \{1, ..., d-1\}$
\begin{align*}
& \into{\frac{\omega}{r} \otimes \frac{\omega}{r} : \partial _x \Omega \;\mlo (\omega)\frac{\chi \left ( \frac{\omega}{r} \cdot \Omega \right)}{\sqrt{1 - \left ( \Omega \cdot \frac{\omega}{r} \right )^2 }}\left (E_i \cdot \frac{\omega}{r}\right )}\\
& = \into{\frac{(\omega \cdot E_i)^2 \;( \omega \cdot \Omega)}{r^3} \mlo (\omega) \frac{\chi \left ( \frac{\omega}{r} \cdot \Omega \right)}{\sqrt{1 - \left ( \Omega \cdot \frac{\omega}{r} \right )^2 }}}\;[E_i \otimes \Omega : \partial _x \Omega + \Omega \otimes E_i : \partial _x \Omega]\\
& = \into{\!\!\!\!\!\!\!\!\frac{1 - \left ( \Omega \cdot \frac{\omega}{r}\right)^2}{d-1}\; \frac{(\omega \cdot \Omega)}{r} \mlo (\omega) \frac{\chi \left ( \frac{\omega}{r} \cdot \Omega \right)}{\sqrt{1 - \left ( \Omega \cdot \frac{\omega}{r} \right )^2 }}}\;[E_i \otimes \Omega : \partial _x \Omega + \Omega \otimes E_i : \partial _x \Omega]\\
& = \frac{1}{d-1} \frac{\intth{\sin ^2 \theta \cos \theta e ^{l \cos \theta}\; \frac{\chi (\cos \theta)}{\sin \theta} \sin ^{d-2} \theta}}{\intth{e^{l\cos \theta} \sin ^{d-2} \theta}}(\partial _x \Omega \Omega \cdot E_i + \;^t\partial _x \Omega \Omega \cdot E_i).
\end{align*}
Thanks to the formula $^t \partial _x \Omega \Omega = \frac{1}{2}\nabla _x |\Omega |^2 = 0$, we obtain
\begin{align}
\label{Equ89}
(I_d - \Omega \otimes \Omega) U_4 & = \frac{r \rho l}{d-1} \frac{\intth{\cos \theta e ^{l \cos \theta} \chi (\cos \theta) \sin ^{d-1} \theta}}{\intth{e^{l\cos \theta} \sin ^{d-2}
\theta}}\sum _{i = 1} ^{d-1} (\partial _x \Omega \Omega \cdot E_i)E_i\nonumber\\
& = \frac{r \rho l}{d-1} \frac{\intth{\cos \theta e ^{l \cos \theta} \chi (\cos \theta) \sin ^{d-1} \theta}}{\intth{e^{l\cos \theta} \sin ^{d-2} \theta}}
(I_d - \Omega \otimes \Omega)\partial _x \Omega \Omega \nonumber \\
& = \frac{r \rho l}{d-1} \frac{\intth{\cos \theta e ^{l \cos
\theta} \chi (\cos \theta) \sin ^{d-1} \theta}}{\intth{e^{l\cos
\theta} \sin ^{d-2} \theta}}\partial _x \Omega \Omega.
\end{align}
The evolution equation for the orientation $\Omega$ comes now by
collecting \eqref{Equ85}, \eqref{Equ86}, \eqref{Equ87},
\eqref{Equ88} and \eqref{Equ89} to get
\begin{align*}
& \frac{ \rho l \partial _t \Omega + r(I_d - \Omega \otimes \Omega)  \nabla _x \rho  }{d-1} \;\frac{\intth{e ^{l \cos \theta} \chi (\cos \theta) \sin ^{d-1} \theta}}{\intth{e^{l\cos \theta} \sin ^{d-2} \theta}}\\
& + \frac{r\rho l}{d-1}  \frac{\intth{\cos \theta e ^{l \cos \theta} \chi (\cos \theta) \sin ^{d-1} \theta}}{\intth{e^{l\cos \theta} \sin ^{d-2} \theta}}\partial _x \Omega \Omega = 0
\end{align*}
which also rewrites as
\[
\partial _t\Omega +  r\frac{\intth{\cos \theta e ^{l \cos \theta} \chi (\cos \theta) \sin ^{d-1} \theta}}{\intth{e^{l\cos \theta} \chi (\cos \theta) \sin ^{d-1} \theta}}(\Omega \cdot \nabla _x)\Omega + \frac{r}{l}(I_d - \Omega \otimes \Omega) \frac{\nabla _x \rho }{\rho} = 0.
\]
\end{proof}
\begin{remark}
Taking the scalar product of the equation $\eqref{Equ74}$ with $\Omega$, we obtain
\[
\frac{1}{2}\partial _t |\Omega |^2 + \frac{k_d r}{2} (\Omega \cdot \nabla _x) |\Omega |^2 = 0,\;\;(t,x) \in \R_+ \times \R^d
\]
implying that $|\Omega (t,x)| = 1, (t,x) \in \R_+ \times \R^d$, provided that $|\Omega (0,x) | = 1, x \in \R^d$.
\end{remark}


\appendix

\section{Integration by parts on spheres}
\label{A}
\begin{proof} (of Lemma \ref{IntByPartsSphere})\\
We pick a function $\eta \in C^1 _c (]r_1, r_2[)$ and observe that
\[
\Divv \{ \eta (|v|) A(v) \} = \eta ^\prime (|v|) \vsv \cdot A(v) + \eta (|v|) (\Divv A)(v),\;\;v \in {\mathcal O}.
\]
Integrating with respect to $v$ over ${\mathcal O}$ leads to
\begin{align*}
0 & = \int _{{\mathcal O}} \Divv \{ \eta (|v|) A(v) \}\;\dv  = \int _{{\mathcal O}}\eta ^\prime (|v|) \vsv \cdot A(v)\;\dv + \int _{{\mathcal O}}\eta (|v|) (\Divv A)(v)\;\dv \\
& = \int _{r_1} ^{r_2} \eta ^\prime (t) \int _{|\omega | = 1} \omega \cdot A(t\omega) t ^{d-1} \;\dom \mathrm{d}t + \int _{r_1} ^{r_2} \eta (t) \int _{|\omega | = 1} (\Divv A ) (t \omega) t ^{d-1} \;\dom \mathrm{d}t \\
& = \int _{r_1} ^{r_2}\eta (t) \left [- \frac{\mathrm{d}}{\mathrm{d}t} \int _{|\omega | = 1}\omega \cdot A(t \omega) t ^{d-1}\;\dom + \int _{|\omega | = 1}(\Divv A )(t\omega)  t ^{d-1}\;\dom   \right ]\;\mathrm{d}t.
\end{align*}
We deduce that
\begin{align*}
\int _{|\omega |  = t}(\Divv A) (\omega) \;\dom & = \frac{\mathrm{d}}{\mathrm{d}t} \int _{|\omega | = 1}\omega \cdot A(t \omega) t ^{d-1}\;\dom  \\
& = \int _{|\omega |  = 1} \{ \omega \cdot \partial _v A (t\omega) \omega t ^{d-1} + \omega \cdot A(t\omega) (d-1) t ^{d-2} \} \;\dom \\
& = \int_{|\omega | = t}\left \{\frac{\omega \otimes \omega}{t^2} : \partial _v A (\omega) + \frac{(d-1)\omega}{t^2} \cdot A(\omega) \right \} \;\dom.
\end{align*}
Assume now that $A(v) \cdot v = 0, v \in {\mathcal O}$. Taking the gradient with respect to $v$ yields $^t \partial _v A(v) v + A(v) = 0$ implying $\partial _v A(v) : v \otimes v = - A(v) \cdot v = 0, v \in {\mathcal O}$. In this case \eqref{Ident0} reduces to \eqref{Ident1}. The formula in \eqref{Ident2} follows easily by applying \eqref{Ident1} with the field $v \to \chi (v) A(v)$.
\end{proof}

\section{Differential operators on spheres}
\label{B}
\begin{proof} (of Lemma \ref{Extension})\\
1. Pick a point $\omega \in \rsphere$ and a tangent vector $X \in
T_\omega (\rsphere)$. Let $\gamma : ]-\eps, \eps[ \to \rsphere$ be
a smooth curve such that $\gamma (0) = \omega$, $\gamma
^{\;\prime} (0) = X$. Then we have
\begin{align*}
\nabla _\omega \tpsi \cdot X & = \mathrm{d} \tpsi _\omega (X) = \frac{\mathrm{d}}{\mathrm{d}t}_{|_{t = 0}} \tpsi (\gamma (t)) = \frac{\mathrm{d}}{\mathrm{d}t}_{|_{t = 0}} \psi (\gamma (t)) \\
& = \widetilde{\nabla _v \psi} (\omega) \cdot X = \imoo \widetilde{\nabla _v \psi} (\omega) \cdot X
\end{align*}
saying that
\[
\nabla _\omega \tpsi  - \imoo \widetilde{\nabla _v \psi } \in T_\omega (\rsphere) \cap (T_\omega (\rsphere ))^\bot = \{0\}.
\]
Therefore we deduce that $\nabla _\omega \tpsi = \imoo \widetilde{\nabla _v \psi}$. \\
2. For any $\omega _t \in t \sphere$ and $X \in T_{\omega _t} (t
\sphere)$, pick a smooth curve $\gamma : ]-\eps, \eps[ \to t
\sphere$ such that $\gamma (0) = \omega _t, \gamma ^{\;\prime} (0)
= X$. Therefore we have
\begin{align*}
\nabla _{\omega _t} \widetilde\psi^t (\omega _t) \cdot X & =
\frac{\mathrm{d}}{\mathrm{d}s }_{| _{s = 0}} \psi (\gamma (s)) =
\frac{\mathrm{d}}{\mathrm{d}s }_{| _{s = 0}} \tpsi \left (r
\frac{\gamma (s)}{t}\right ) = \nabla _\omega \tpsi \left ( r
\frac{\omega _t}{t} \right ) \cdot \frac{r}{t} X
\end{align*}
saying that $(\nabla _{\omega _t} \widetilde\psi^t ) (\omega _t) =
\frac{r}{t} ( \nabla _\omega \tpsi) \left (r \frac{\omega _t}{t}
\right )$. Actually the function $\psi$ has only tangent gradient
(to the spheres), and thus
\[
(\nabla _v \psi )(\omega _t) = (\nabla _{\omega _t}
\widetilde\psi^t) (\omega _t) = \frac{r}{t} (\nabla _\omega \tpsi
) \left ( r \frac{\omega _t}{t} \right),\;\;|\omega _t | = t.
\]
3. Consider a $C^1$ function $\tpsi$ on $\rsphere$ and $\psi$ a $C^1$ extension of $\tpsi$ on $\calo$. By Lemma \ref{IntByPartsSphere}, we know that
\begin{equation}
\label{EquStep1}
\int_{|\omega| = r} \widetilde{\nabla _v \psi }(\omega) \cdot \txi (\omega) \;\dom + \int_{|\omega| = r} \tpsi (\omega) \widetilde{\Divv \xi } (\omega) \;\dom = 0.
\end{equation}
But, by the previous statement, we can write
\begin{align}
\label{EquStep2}
\widetilde{\nabla _v \psi } (\omega) \cdot \txi (\omega) & = \widetilde{\nabla _v \psi }(\omega) \cdot \imoo \txi (\omega) = \imoo \widetilde{\nabla _v \psi }(\omega)  \cdot \tilde{\xi}(\omega) \nonumber \\
& = \nabla _\omega \tpsi (\omega) \cdot \txi (\omega).
\end{align}
Combining \eqref{EquStep1}, \eqref{EquStep2} yields
\begin{align*}
\int_{|\omega | = r} \tpsi (\omega) \Divo \txi (\omega) \;\dom & = - \int_{|\omega | = r} \nabla _\omega \tpsi (\omega) \cdot \txi (\omega) \;\dom\\
& = \int_{|\omega | = r}\tpsi (\omega)  \widetilde{\Divv \xi }(\omega) \;\dom,\;\;\tpsi \in C^1 (\rsphere)
\end{align*}
implying that $\Divo \txi = \widetilde{\Divv \xi }$. \\
4. Consider $\txi = \txi (\omega)$ a $C^1$ tangent vector field on
$\rsphere$ and $\xi (v) = \txi\left ( r \vsv \right ), v \in
\R^d\setminus \{0\}$. We have $\xi (v) \cdot v = 0, v \in \R ^d
\setminus \{0\}$, and for any $t>0$
\[
(\Divv \xi ) (\omega _t) = (\mathrm{div} _{\omega _t}
\widetilde\xi^t )(\omega _t) = \frac{r}{t} ( \Divo \txi ) \left (
r \frac{\omega _t }{t} \right ),\;\;\omega _t \in t \sphere.
\]
The first equality comes by the third statement of Lemma
\ref{Extension}. In oder to check the second equality, pick a
$C^1$ function $\widetilde\psi^t$ on $t \sphere$ and consider the
function $\tpsi (\omega) = \widetilde\psi^t (t\omega /r), \omega
\in \rsphere$. We have
\[
\nabla _\omega \tpsi (\omega) = \frac{t}{r}( \nabla _{\omega _t}
\widetilde\psi^t ) \left ( t \frac{\omega}{r} \right )
\]
and thus
\begin{align*}
- \int _{|\omega _t| = t}  ( \mathrm{div} _{\omega _t}
\widetilde\xi^t ) (\omega _t) \widetilde\psi^t (\omega _t)
\;\mathrm{d}\omega _t &
= \int _{|\omega _t| = t}\widetilde\xi^t (\omega _t) \cdot \nabla _{\omega _t} \widetilde\psi^t (\omega _t)\;\mathrm{d}\omega _t \\
& = \int_{|\omega | = r} \xi \left ( t \frac{\omega}{r}\right ) \cdot \left ( \nabla _{\omega _t} \widetilde\psi^t \right ) \left ( t \frac{\omega}{r} \right ) \left  ( \frac{t}{r} \right ) ^{d-1} \;\dom\\
& =  \int_{|\omega | = r} \txi (\omega) \cdot \nabla _\omega \tpsi (\omega) \left ( \frac{t}{r}\right ) ^{d-2} \;\dom\\
& = - \int_{|\omega | = r} (\Divo \txi ) (\omega) \tpsi (\omega) \left ( \frac{t}{r}\right ) ^{d-2}\;\dom \\
& = - \int_{|\omega _t| = t}\frac{r}{t} (\Divo \txi ) \left ( r
\frac{\omega _t}{t} \right )\widetilde\psi^t (\omega _t)
\;\mathrm{d}\omega _t.
\end{align*}
We deduce that $(\mathrm{div}_{\omega _t } \xi )(\omega _t) =
\frac{r}{t} (\Divo \txi ) (r \omega _t /t)$ for any $\omega _t \in
t \sphere$.
\end{proof}


\section{Collision invariants and linearization of $\ave{Q}$}
\label{C}
\begin{proof} (of Proposition \ref{ZeroAvePart})\\
Consider a collision invariant $\tpsi$, and let us compute
\[
I[\tpsi] := \int _{v \neq 0} \tpsi \left ( r \vsv \right )\Divv A_F (\Fo) \;\dv
\]
that is
\begin{align*}
I[\tpsi] =& \, \int _{v \neq 0} \left \{ - (v - u[F] ) \cdot \nabla _v \left[ \tpsi \left ( r \vsv \right )\right] + \sigma \Delta _v \left [ \tpsi \left ( r \vsv \right )\right ]\right \} \Fo \;\dv \\
& + \int _{v \neq 0} (v - u[F]) \cdot \frac{\int _{v^\prime \neq
0}\nabla _{v^\prime} \left[ \tpsi \left ( r
\frac{v^\prime}{|v^\prime|} \right )\right]F
\;\mathrm{d}v^\prime}{\intvp{F}}\Fo\;\dv.
\end{align*}
We consider the application
\begin{align*}
\label{EquChi}
\chi (v) = &\,- (v-u[F])\cdot \nabla _v \left[ \tpsi \left ( r \vsv \right )\right] + \sigma \Delta _v \left[ \tpsi \left ( r \vsv \right )\right] \\
& + (v-u[F]) \cdot \frac{\int _{v^\prime \neq 0}\nabla _{v^\prime} \left[ \tpsi \left ( r \frac{v^\prime}{|v^\prime|} \right )\right]F \;\mathrm{d}v^\prime}{\intvp{F}}\nonumber \\
= &\, u[F] \cdot \nabla _v \left[ \tpsi \left ( r \vsv \right )\right] + \sigma \Delta _v \left[ \tpsi \left ( r \vsv \right )\right]\nonumber \\
& + (v-u[F]) \cdot \intopr{(\nabla _{\omega ^\prime} \tpsi ) (\omega ^\prime) \mlo (\omega ^{\prime})},\;\;v \neq 0.\nonumber
\end{align*}
As $\tpsi$ is a collision invariant, we have $\chi (\omega) = 0$, for any $\omega \in \rsphere$ cf. \eqref{Equ43}. Thanks to Lemma \ref{FOneF}, the integral $I[\tpsi]$ can be written
\begin{align*}
I[\tpsi] & = \int_{v \neq 0} \chi (v) \Fo \;\dv = \frac{\sigma}{\beta} \frac{\mlo}{M} \frac{\mathrm{d}}{\mathrm{d}t}_{|_{t = r}} \int _{|\omega _t| = t} M (\omega _t) \frac{\chi (\omega _t)}{t(t^2 - r^2)}\;\mathrm{d}\omega _t\\
& = \frac{\sigma}{\beta} \frac{\mlo}{M}
\frac{\mathrm{d}}{\mathrm{d}t}_{|_{t = r}} \int _{|\omega | = r}
M\left ( t \frac{\omega}{r}\right ) \frac{\chi \left (
t\frac{\omega}{r}\right)}{t(t^2 - r^2)} \left ( \frac{t}{r} \right
) ^{d-1} \;\mathrm{d}\omega.
\end{align*}
Thanks to the second statement in Lemma \ref{Extension}, we can write
\[
\nabla _v \left [ \tpsi \left ( r \vsv \right ) \right ]\left ( t \frac{\omega}{r}\right ) = \frac{r}{t} ( \nabla _\omega \tpsi ) (\omega)
\]
and by \eqref{Complement} in Lemma \ref{Extension} point 4, we
have
\[
\Delta _v \left [ \tpsi \left ( r \vsv \right ) \right ]\left ( t \frac{\omega}{r}\right ) = \left ( \frac{r}{t} \right )^2( \Delta _\omega \tpsi ) (\omega).
\]
Therefore, the function $t \to \chi \left ( t \frac{\omega}{r}\right)$ is given by
\[
\chi \left ( t \frac{\omega}{r}\right) = \frac{r}{t} u[F] \cdot ( \nabla _\omega \tpsi ) (\omega) + \sigma \frac{r^2}{t^2} ( \Delta _\omega \tpsi ) (\omega) + \left ( t \frac{\omega}{r} - u[F] \right ) \cdot W[\tpsi]
\]
with $W[\tpsi] = \into{\nabla _\omega \tpsi \mlo (\omega)}$. As $\chi (\omega) = 0, \omega \in \rsphere$, because $\tpsi$ is a collision invariant, we obtain
\begin{align*}
M\left ( t \frac{\omega}{r} \right ) & \frac{\chi \left ( t \frac{\omega}{r} \right )}{t(t^2 - r^2)}  = M\left ( t \frac{\omega}{r} \right ) \frac{\chi \left ( t \frac{\omega}{r} \right )- \chi (\omega)}{t(t^2 - r^2)}\\
& = M\left ( t \frac{\omega}{r} \right ) \frac{\frac{r-t}{t} u[F] \cdot (\nabla _\omega \tpsi )(\omega) + \sigma \frac{r^2 - t^2}{t^2} (\Delta _\omega \tpsi ) (\omega) + \frac{t-r}{r} \omega \cdot W[\tpsi ]}{t(t-r)(t+r)}\\
& = M\left ( t \frac{\omega}{r} \right ) \frac{\omega \cdot W[\tpsi]}{rt(t+r)} - M\left ( t \frac{\omega}{r} \right ) \frac{\sigma}{t^3}(\Delta _\omega \tpsi )(\omega) - M\left ( t \frac{\omega}{r} \right ) \frac{u[F] \cdot (\nabla _\omega \tpsi)(\omega)}{t^2 (t+r)}\\
& = \frac{M\left ( t \frac{\omega}{r} \right ) }{rt(t+r)} [ \omega \cdot W[\tpsi] + u[F] \cdot (\nabla _\omega \tpsi )(\omega)] - \frac{\sigma}{t^3}\Divo \left ( M\left ( t \frac{\omega}{r} \right ) \nabla _\omega \tpsi \right ).
\end{align*}
It is easily seen that $\into{M\left ( t \frac{\omega}{r} \right ) \omega } \in \R \Omega$ and, as we know that $W[\tpsi]\in (\R \Omega)^\bot$, we deduce that
\[
\into{M\left ( t \frac{\omega}{r} \right ) \omega \cdot W[\tpsi]} = 0.
\]
Taking into account that
\[
\into{\Divo\left \{ M\left ( t \frac{\omega}{r} \right ) \nabla _\omega \tpsi \right\}} = 0
\]
we deduce that
\begin{align*}
I[\tpsi] = &\, \frac{\sigma}{\beta} \frac{\mlo}{M} \frac{\mathrm{d}}{\mathrm{d}t}_{|_{t=r}} \left [ \left ( \frac{t}{r} \right ) ^{d-1}  \into{\frac{M\left ( t \frac{\omega}{r} \right ) \nabla _\omega \tpsi \cdot u[F]}{rt(t+r)}}\right]\\
= &\, \frac{\sigma}{\beta} \frac{\mlo}{M}\frac{\mathrm{d}}{\mathrm{d}t}_{|_{t=r}} \left [ \left ( \frac{t}{r} \right ) ^{d-1} \frac{1}{rt(t+r)}\right] \into{M\left (  \omega \right ) \nabla _\omega \tpsi \cdot u[F]} \\
& + \frac{\sigma}{2 r^3 \beta}
\frac{\mlo}{M}\frac{\mathrm{d}}{\mathrm{d}t}_{|_{t=r}}
\into{M\left ( t \frac{\omega}{r} \right ) \nabla _\omega \tpsi
\cdot u[F]}.
\end{align*}
As before
\[
\frac{\mlo}{M}\into{M(\omega) \nabla _\omega \tpsi} \cdot u[F] = \into{\mlo \nabla _\omega \tpsi } \cdot u[F] = W[\tpsi] \cdot u[F] = 0
\]
implying that
\begin{align*}
I[\tpsi] & = \frac{\sigma}{2r^3 \beta} \frac{\mlo}{M} \frac{\mathrm{d}}{\mathrm{d}t}_{|_{t=r}} \into{M\left ( t \frac{\omega}{r} \right ) \nabla _\omega \tpsi \cdot u[F]}\\
& = \frac{\sigma}{2r^3 \beta} \frac{\mlo}{M}\into{M(\omega) \left ( \frac{u[F] - \omega}{\sigma} \cdot \frac{\omega}{r} \right ) \;\left ( \nabla _\omega \tpsi \cdot u[F]\right )}\\
& = \frac{1}{2r^4 \beta}\into{\mlo (u[F] \cdot \omega  - r^2) ( \nabla _\omega \tpsi \cdot u[F])}\\
& = \frac{1}{2r^4 \beta}\into{\mlo (\nabla _\omega \tpsi \cdot
u[F] ) (\omega \cdot u[F])}.
\end{align*}
In the last equality we have used one more time that $W[\tpsi] \cdot u[F] = 0$. We claim that the last integral vanishes. Indeed, multiplying by $(\omega \cdot u[F])^2$ the equation \eqref{Equ62} satisfied by the collision invariant $\tpsi$ one gets
\begin{align*}
2\sigma \into{\mlo (\nabla _\omega \tpsi \cdot u[F] ) (\omega \cdot u[F])} & = W[\tpsi] \cdot \into{\mlo (\omega \cdot u[F])^2 (\omega - u[F])} \\
& = W[\tpsi] \cdot \into{\mlo (\omega \cdot u[F])^2 \omega }.
\end{align*}
It is easily seen that $\into{\mlo (\omega \cdot u[F])^2\omega } \in \R \Omega$ and therefore
\[
W[\tpsi] \cdot \into{\mlo (\omega \cdot u[F])^2 \omega } = 0
\]
saying that $I[\tpsi] = 0$.
\end{proof}

\section*{Acknowledgments}
MB acknowleges support from the Euratom research and training programme 2014-2018 under grant agreement No 633053. JAC acknowleges partial support of the Royal Society via a Wolfson Research Merit Award.


\begin{thebibliography}{999}

\bibitem{AbraSteg} M. Abramowitz and I. Stegun, {\it Handbook of Mathematical Functions : with formulas, graphs and mathematical tables} (Dover Books on Mathematics, 1965).

\bibitem{AIR}
M. Agueh, R. Illner, and A. Richardson, Analysis and simulations
of a refined flocking and swarming model of Cucker-Smale type,
{\it Kinetic and Related Models} {\bf 4} (2011) 1--16.

\bibitem{ABCV}
G.~Albi, D.~Balagu{\'e}, J.~A. Carrillo, J.~von Brecht, Stability
analysis of flock and mill rings for second order models in
swarming, {\it SIAM J. Appl. Math.} {\bf 74} (2014) 794--818.

\bibitem{Aoki}
I. Aoki, A Simulation Study on the Schooling Mechanism in Fish,
{\it Bull. Jap. Soc. Sci. Fisheries} {\bf 48} (1982) 1081--1088.

\bibitem{BCCD16}
A. B. T. Barbaro, J. A. Ca\~nizo, J. A. Carrillo, P. Degond, Phase
transitions in a kinetic flocking model of Cucker-Smale type, {\it
Multiscale Model. Simul.} {\bf 14} (2016) 1063--1088.

\bibitem{BEBSVPSS}
A. B. T. Barbaro, B. Einarsson, B. Birnir, S. Sigurthsson, H.
Valdimarsson, O.K. Palsson, S. Sveinbjornsson and T. Sigurthsson,
{Modelling and simulations of the migration of pelagic fish}, {\it
ICES J. Mar. Sci.} {\bf 66} (2009) 826--838.

\bibitem{BTTYB}
A. B. T. Barbaro, K. Taylor, P.F. Trethewey, L. Youseff and B.
Birnir, {Discrete and continuous models of the dynamics of pelagic
fish: application to the capelin}, {\it Mathematics and Computers
in Simulation} {\bf 79} (2009) 3397--3414.

\bibitem{BKSUV}
A.~L. Bertozzi, T.~Kolokolnikov, H.~Sun, D.~Uminsky, J.~von
Brecht, Ring patterns and their bifurcations in a nonlocal model
of biological swarms, {\it Commun. Math. Sci.} {\bf 13} (2015)
955--985.

\bibitem{BCC11}
F. Bolley, J.A. Ca\~{n}izo and J.A. Carrillo, {Stochastic Mean-Field
Limit: Non-Lipschitz Forces \& Swarming}, {\it Math. Mod. Meth. Appl.
Sci.} {\bf 21} (2011) 2179--2210.

\bibitem{BCC12}
F. Bolley, J.A. Ca\~{n}izo and J.A. Carrillo, {Mean-field limit for
the stochastic Vicsek model}, {\it Appl. Math. Letters} {\bf 25} (2012)
339--343.

\bibitem{BosAsyAna} M. Bostan, {The Vlasov-Poisson system with strong external magnetic field. Finite Larmor radius regime}, {\it Asymptot. Anal.} {\bf 61} (2009) 91--123.

\bibitem{BosTraEquSin} {M. Bostan}, {Transport equations
with disparate advection fields. Application to the gyrokinetic models in plasma physics}, {\it J. Differential Equations} {\bf 249} (2010) 1620--1663.

\bibitem{BosGuiCen3D} {M. Bostan}, {Gyro-kinetic Vlasov equation in three dimensional setting. Second order approximation}, {\it SIAM J. Multiscale Model. Simul.} {\bf 8} (2010) 1923--1957.

\bibitem{Bos12} {M. Bostan}, {Transport of charged particles under fast oscillating magnetic fields}, {\it SIAM J. Math. Anal.} {\bf 44} (2012) 1415--1447.

\bibitem{BosCar13} {M. Bostan and J.-A. Carrillo}, {Asymptotic fixed-speed reduced dynamics for kinetic equations in swarming}, {\it Math. Mod. Meth. Appl. Sci.} {\bf 23} (2013) 2353--2393.

\bibitem{BosCalQAM1} {M. Bostan and C. Caldini-Queiros}, {Finite Larmor radius approximation for collisional magnetic confinement. Part I: the linear Boltzmann equation}, {|it Quart. Appl. Math.} {\bf Vol. LXXII} (2014) 323--345.

\bibitem{BosCalQAM2} {M. Bostan and C. Caldini-Queiros}, {Finite Larmor radius approximation for collisional magnetic confinement. Part II: the Fokker-Planck-Landau equation}, {\it Quart. Appl. Math.} {\bf Vol. LXXII} (2014) 513--548.

\bibitem{BosDCDS15} {M. Bostan}, {On the Boltzmann equation for charged particle beams under the effect of strong magnetic fields}, {\it Discrete Contin. Dyn. Syst. Ser. B} {\bf 20} (2015) 339--371.

\bibitem{BosAnisoDiff} {M. Bostan}, {Strongly anisotropic diffusion problems; asymptotic analysis}, {\it J. Differential Equations} {\bf 256} (2016) 1043--1092.

\bibitem{BosIHP15} {M. Bostan}, {High magnetic field equilibria for the Fokker-Planck-Landau equation}, {\it Ann. Inst. H. Poincar{\'e} Anal. Non Lin\'eaire}  (2015), http://dx.doi.org/10.1016/j.anihpc.2015.01.008

\bibitem{BraHep77} {W. Braun and K. Hepp}, {The Vlasov dynamics and its fluctuations in the 1/N limit of interacting classical particles}, {\it Commun. Math. Phys.} {\bf 56} (1977) 101--113.

\bibitem{bre} H. Br\'ezis, Analyse fonctionnelle: th\'eorie et
applications, Masson, Paris, 1983.

\bibitem{CCR10}
J.A. Ca\~{n}izo, J.A. Carrillo and J. Rosado, {A well-posedness
theory in measures for some kinetic models of collective motion},
{\it Math. Mod. Meth. Appl. Sci.} {\bf 21} (2011) 515--539.

\bibitem{CCH}
J. A. Carrillo, Y.-P. Choi, and M. Hauray, The derivation of
swarming models: Mean-field limit and Wasserstein distances, {\it
Collective Dynamics from Bacteria to Crowds: An Excursion Through
Modeling, Analysis and Simulation, Series: CISM International
Centre for Mechanical Sciences, Springer} {\bf 533} (2014) 1--45.

\bibitem{CCH2}
J. A. Carrillo, Y.-P. Choi, and M. Hauray, Local well-posedness of
the generalized Cucker-Smale model with singular kernels, {\it
ESAIM Proc.} {\bf 47} (2014) 17--35.

\bibitem{CCHS}
J. A. Carrillo, Y.-P. Choi, M. Hauray, and S. Salem, Mean-field
limit for collective behavior models with sharp sensitivity
regions, to appear in J. European Math. Soc.


\bibitem{CarDorPan09} {J.A. Carrillo, M.R. D'Orsogna and V. Panferov}, {Double milling in a self-propelled swarms from kinetic theory}, {\it Kinet. Relat. Models} {\bf 2} (2009) 363--378.

\bibitem{CFRT10} J.A. Carrillo, M. Fornasier, J. Rosado and G. Toscani,
Asymptotic flocking dynamics for the kinetic Cucker-Smale model,
{\it SIAM J. Math. Anal.} {\bf 42} (2010) 218--236.

\bibitem{review}
J.A. Carrillo, M. Fornasier, G. Toscani and F. Vecil, {Particle, Kinetic, and Hydrodynamic Models of Swarming}.
{\it Mathematical Modeling of Collective Behavior in Socio-Economic and
Life Sciences, Series: Modelling and Simulation in Science and
Technology}, Birkhauser (2010), 297--336.

\bibitem{CHM1}
J. A. Carrillo, Y. Huang, S. Martin, Explicit flock solutions for
{Q}uasi-{M}orse potentials, {\it European J. Appl. Math.} {\bf 25}
(2014) 553--578.

\bibitem{CHM2}
J. A. Carrillo, Y. Huang, S. Martin, Nonlinear stability of flock
solutions in second-order swarming models, {\it Nonlinear Anal.
Real World Appl.} {\bf 17} (2014) 332--343.

\bibitem{UAB25}
J.A. Carrillo, A. Klar, S. Martin and S. Tiwari, {Self-propelled
interacting particle systems with roosting force}, {\it Math. Mod.
Meth. Appl. Sci.} {\bf 20} (2010) 1533--1552.

\bibitem{CKR}
J.~A. Carrillo, A.~Klar, A.~Roth, Single to double mill small
noise transition via semi-lagrangian finite volume methods, {\it
Comm. Math. Sci.} {\bf 14} (2016) 1111-1136.


\bibitem{ChuDorMarBerCha07} {Y.-L. Chuang, M.R. D'Orsogna, D. Marthaler, A.L. Bertozzi and L. Chayes}, {State transitions and the continuum limit for a 2D interacting, self-propelled particle system}, {\it Physica D} {\bf 232} (2007) 33--47.

\bibitem{ChuHuaDorBer07} {Y.-L. Chuang, Y.R. Huang, M.R. D'Orsogna and A.L. Bertozzi}, {Multi-vehicle flocking~: scalability of cooperative control algorithms using pairwise potentials}, {\it IEEE International Conference on Robotics and Automation} (2007) 2292--2299.

\bibitem{CouKraFraLev02} I.D. Couzin, J. Krause, R. James, G.D. Ruxton and N.R. Franks, Collective Memory and Spatial Sorting in Animal Groups, {\it J. Theor.
Biol.} {\bf 218} (2002) 1--11.

\bibitem{CouKraFraLev05} {I.D. Couzin, J. Krause, N.R. Franks and S.A. Levin}, {Effective leadership and decision making in animal groups on the move}, {\it Nature} {\bf 433} (2005) 513--516.

\bibitem{CS0}
F. Cucker and S. Smale, On the mathematics of emergence, {\it
Japan. J. Math.} {\bf 2} (2007) 197--227.

\bibitem{CS2}
F.~Cucker and S.~Smale. Emergent behavior in flocks. {\it IEEE
Trans. Automat. Control} {\bf 52} (2007) 852--862.

\bibitem{CziStaVic97} {A. Czirok, H.E. Stanley and T. Vicsek}, {Spontaneously ordered motion of self-propelled particles}, {\it J. Phys. A: Math. Gen.} {\bf 30} (1997) 1375--1385.

\bibitem{DFL10} P. Degond, A. Frouvelle and J.-G. Liu, Macroscopic limits and phase transition in a system of self-propelled particles, {\it J. Nonlinear Sci.} {\bf 23} (2013) 427--456.

\bibitem{DFL13}
P. Degond, A. Frouvelle, J.-G. Liu, Phase transitions, hysteresis,
and hyperbolicity for self-organized alignment dynamics, {\it
Arch. Ration. Mech. Anal.} {\bf 216} (2015) 63--115.

\bibitem{DFM}
P. Degond, A. Frouvelle, S. Merino-Aceituno, A new flocking model
through body attitude coordination, to appear in Mathematical
Models and Methods in Applied Sciences, arXiv:1605.03509.

\bibitem{DFMT}
P. Degond, A. Frouvelle, S. Merino-Aceituno, A. Trescases,
Quaternions in collective dynamics, preprint arXiv:1701.01166.

\bibitem{DLMP}
P. Degond, J-G. Liu, S. Motsch, V. Panferov, Hydrodynamic models
of self-organized dynamics: derivation and existence theory, {\it
Methods and Applications of Analysis} {\bf 20} (2013) 89--114.

\bibitem{DLR}
P. Degond, J.-G. Liu, C. Ringhofer, Evolution of wealth in a
nonconservative economy driven by local Nash equilibria.
Philosophical Transactions of the Royal Society A, 372 (2014),
20130394.

\bibitem{DMY}
P. Degond, A. Manhart, H. Yu, A continuum model for nematic
alignment of self-propelled particles, to appear in Discrete and
Continuum Dynamical Systems Series B, arXiv:1509.03124.

\bibitem{DM08} P. Degond and S. Motsch, Continuum limit of self-driven particles
with orientation interaction, {\it Math. Mod. Meth. Appl. Sci.}
{\bf 18} (2008) 1193--1215.

\bibitem{Dob79} {R. Dobrushin}, Vlasov equations, {\it Funct. Anal. Appl.} {\bf 13} (1979) 115--123.

\bibitem{DorChuBerCha06} {M.R. D'Orsogna, Y.-L. Chuang, A.L. Bertozzi and L. Chayes}, {Self-propelled particles with soft-core interactions~: Patterns, stability and collapse}, {\it Phys. Rev. Lett.} {\bf 96} (2006) 104302-1/4.

\bibitem{EbeErd03} {W. Ebeling and U. Erdmann}, {Nonequilibrium statistical mechanics of swarms of driven particles}, {\it Complexity} {\bf 8} (2003) 23--30.

\bibitem{FL11} A. Frouvelle, J.-G. Liu, Dynamics in a kinetic model of oriented
particles with phase transition, {\it SIAM J. Math. Anal.} {\bf 44} (2012) 791--826.

\bibitem{GreCha04} {G. Gr\'egoire and H. Chat\'e}, {On set of collective and cohesive motion}, {\it Phys. Rev. Lett.} {\bf 92} (2004) 025702-1/4.

\bibitem{HL08}
S.-Y. Ha and J.-G. Liu, A simple proof of the Cucker-Smale flocking
dynamics and mean-field limit, {\it Comm. Math. Sci.} {\bf 7} (2009) 297--325.

\bibitem{HLL09}
S.-Y. Ha, K. Lee and D. Levy, Emergence of Time-Asymptotic Flocking
in a Stochastic Cucker-Smale System, {\it Comm. Math. Sci.} {\bf 7} (2009)
453--469.

\bibitem{HT08}
S.-Y. Ha and E. Tadmor, From particle to kinetic and hydrodynamic
descriptions of flocking, {\it Kinetic and Related Models} {\bf 1} (2008)
415--435.

\bibitem{HCH09} H. Hildenbrandt, C. Carere and C.K. Hemelrijk, Self-organised
complex aerial displays of thousands of starlings: a model, {\it Behav.
Ecol.} {\bf 21} (2010) 1349--1359.

\bibitem{HW}
A. Huth and C. Wissel, The Simulation of the Movement of Fish
Schools, {\it Journal of Theoretical Biology} {\bf 156} (1992) 365--385.

\bibitem{KCBFL}
T.~Kolokolnikov, J.~A. Carrillo, A.~Bertozzi, R.~Fetecau,
M.~Lewis, Emergent behaviour in multi-particle systems with
non-local interactions, {\it Phys. D} {\bf 260} (2015) 1--4.

\bibitem{LevRapCoh00} {H. Levine, W.-J. Rappel and I. Cohen}, {Self-organization in systems of self-propelled particles}, {\it Phys. Rev. E} {\bf 63} (2000) 017101-1/4.

\bibitem{MT11} S. Motsch and E. Tadmor, A new model for self-organized dynamics and its flocking behavior, {J. Stat. Phys.} {\bf 144} (2011) 923--947.

\bibitem{MT2}
S. Motsch, E. Tadmor, Heterophilious dynamics enhances consensus,
{\it SIAM Review} {\bf 56} (2014) 577--621.

\bibitem{Neu77} H. Neunzert, {The Vlasov equation as a limit of Hamiltonian classical mechanical systems of interacting particles}, {\it Trans. Fluid Dynamics} {\bf 18} (1977) 663--678.

\bibitem{ParEde99} {J. Parrish and L. Edelstein-Keshet}, {Complexity, pattern and evolutionary trade-offs in animal aggregation}, {\it Science} {\bf 294} (1999) 99--101.

\bibitem{Reyn}
C. W. Reynolds, Flocks, herds and schools: A distributed
behavioral model, {\it ACM SIGGRAPH Computer Graphics} 21 (1987)
25--34.

\bibitem{VicCziBenCohSho95} {T. Vicsek, A. Czirok, E. Ben-Jacob, I. Cohen and O. Shochet}, {Novel type of phase transition in a system of self-driven particles}, {Phys. Rev. Lett.} {\bf 75} (1995) 1226--1229.


\end{thebibliography}
\end{document}